\newcommand{\R}{\mathbb{R}}
\newcommand{\N}{\mathbb{N}}
\newcommand{\Z}{\mathbb{Z}}
\newcommand{\E}{\mathbb{E}}
\renewcommand{\P}{\mathbb{P}}
\newcommand{\Xpsione}{X^{\psi,(1)}}
\newcommand{\Xpsitwo}{X^{\psi,(2)}}
\newtheorem{theorem}{Theorem}[section]
\newtheorem{lemma}[theorem]{Lemma}
\newtheorem{cor}[theorem]{Corollary}
\let\inf\relax \DeclareMathOperator*\inf{\vphantom{p}inf}
\let\lim\relax \DeclareMathOperator*\lim{\vphantom{p}lim}
\begin{document}

\title{ On stochastic differential equations with arbitrarily slow convergence rates for strong approximation in two space dimensions}

\author{M\'at\'e Gerencs\'er, Arnulf Jentzen, and Diyora Salimova}

\maketitle

\begin{abstract}
In the recent article [Jentzen, A., Müller-Gronbach, T., and Yaroslavtseva, L.,  Commun. Math. Sci., 14(6), 1477--1500, 2016] it has been established that for every arbitrarily slow convergence speed and every natural number $d \in \{4,5,\ldots\}$ there exist $d$-dimensional stochastic differential equations (SDEs) with infinitely often
differentiable and globally bounded coefficients such that no approximation method based on finitely many observations of the driving Brownian motion can converge in absolute mean to the solution faster than the given speed of convergence. In this paper we strengthen the above  result  by proving that this slow convergence phenomena also arises in two ($d=2$) and three ($d=3$) space dimensions.
\end{abstract}


\section{Introduction}

In the recent article \cite{jentzen2016slow} it has been established that for every arbitrarily slow convergence speed and every natural number $d \in \{4,5,\ldots\}$ there exist $d$-dimensional stochastic differential equations (SDEs) with infinitely often
differentiable and globally bounded coefficients such that no approximation method based on finitely many observations of the driving Brownian motion can converge in absolute mean to the solution faster than the given speed of convergence. More specifically, Theorem~1.3 in \cite{jentzen2016slow} implies the following theorem.

\begin{theorem}
	\label{thm:d4}
Let $T \in (0, \infty)$, $d \in \{4,5, \ldots\}$, $\xi \in \R^d$, $m \in \N = \{1, 2, \ldots\}$, $(\varepsilon_n)_{n \in \N} \subseteq (0, T]$, $(\delta_n)_{n \in \N} \subseteq [0,\infty)$ satisfy $\limsup_{n \to \infty} \delta_n = 0$. Then there exist infinitely often differentiable and globally bounded functions $\mu \colon \R^d \to \R^d$ and $\sigma \colon \R^{d} \to \R^{d \times m}$ such that for every probability space $(\Omega, \mathcal{F}, \P)$, every   normal filtration $\mathbb{F}=(\mathbb{F}_t)_{t \in [0,T]}$ on $(\Omega, \mathcal{F}, \P)$, every standard $(\Omega, \mathcal{F}, \P, \mathbb{F})$-Brownian motion $W \colon  [0,T] \times \Omega \to \R^m$, every continuous $\mathbb{F}$-adapted stochastic process $X \colon [0,T] \times \Omega \to \R^d$ with $\forall \, t \in [0,T] \colon \P(X_t = \xi + \int_0^t \mu(X_s) \, ds + \int_0^t \sigma(X_s) \, dW_s)=1$, and every $n \in \N$ it holds that 
\begin{align}
\label{eq:thm:d4}
 \inf_{t_1, \ldots, t_n \in [0,T]} \inf_{\substack{u \colon (\R^m)^n \times C([\varepsilon_n,T], \R^m)   \to \R^d \\  \text{ measurable}}}\E  \Big[\big\| X_T  - u( W_{t_1},  \ldots, W_{t_n},(W_s)_{s \in [\varepsilon_n,T]})\big\|_{\R^d}\Big] \geq \delta_n.
\end{align}
\end{theorem}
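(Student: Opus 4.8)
The plan is to deduce Theorem~\ref{thm:d4} from Theorem~1.3 in \cite{jentzen2016slow}; I do not intend to re-prove the latter, whose proof rests on an explicit construction of an SDE in which, roughly speaking, the part of the dependence of $X_T$ on the driving Brownian motion that is costly to resolve is localized to an arbitrarily short time interval adjacent to the initial time. The remaining work is therefore only to reconcile the hypotheses and the quantitative conclusion of the two statements.

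First I would dispose of the hypothesis on $(\delta_n)_{n \in \N}$: since $(\delta_n)_{n \in \N} \subseteq [0, \infty)$, the condition $\limsup_{n \to \infty} \delta_n = 0$ is equivalent to $\lim_{n \to \infty} \delta_n = 0$, which is (at least) what the cited theorem requires. Next I would check that the admissible approximations in \eqref{eq:thm:d4} --- measurable functions of the finitely many observations $W_{t_1}, \ldots, W_{t_n}$ together with the path segment $(W_s)_{s \in [\varepsilon_n, T]}$, with an infimum taken over the observation times $t_1, \ldots, t_n \in [0,T]$ and over the reconstruction map $u$ --- are contained in the class of approximations for which Theorem~1.3 in \cite{jentzen2016slow} establishes the lower bound $\delta_n$; in particular I would verify that its statement indeed grants the approximation at least the free observation of the path on $[\varepsilon_n, T]$. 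In the generality in which Theorem~1.3 in \cite{jentzen2016slow} is stated, Theorem~\ref{thm:d4} then follows at once.

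It remains to perform the (routine) reductions needed to recover the full generality asserted above, in case the cited theorem is formulated only for a one-dimensional driving noise or only for $d = 4$. To pass from $m = 1$ to general $m \in \N$, I would take $\sigma \colon \R^d \to \R^{d \times m}$ to have the diffusion coefficient supplied by the cited theorem as its first column and zero columns otherwise; then $X$ is a measurable functional of $W^{(1)}$ alone while $W^{(2)}, \ldots, W^{(m)}$ are independent of $X$, so conditioning on $W^{(1)}$ and applying the conditional Jensen inequality to the convex function $\|\cdot\|_{\R^d}$ produces, for every admissible $u$, an admissible $\tilde u$ depending only on the $W^{(1)}$-components of the data with $\E[\|X_T - u(\cdots)\|_{\R^d}] \geq \E[\|X_T - \tilde u(\cdots)\|_{\R^d}] \geq \delta_n$, the last inequality being the cited theorem for scalar driving noise. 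To pass from $d = 4$ to an arbitrary $d \in \{4, 5, \ldots\}$, I would embed the four-dimensional example into the first four coordinates of $\R^d = \R^4 \times \R^{d-4}$ with initial value $(\xi_1, \ldots, \xi_4)$ and set the drift and diffusion of the last $d - 4$ coordinates to zero, so that $X_t^{(5)}, \ldots, X_t^{(d)} \equiv \xi_5, \ldots, \xi_d$; since $\|(x_1, \ldots, x_4)\|_{\R^4} \leq \|x\|_{\R^d}$ for $x \in \R^d$ and post-composing $u$ with the projection onto the first four coordinates preserves admissibility, the infimum in \eqref{eq:thm:d4} dominates the corresponding four-dimensional infimum, which is $\geq \delta_n$. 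Both reductions evidently preserve infinite differentiability and global boundedness of $\mu$ and $\sigma$.

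I do not anticipate a genuine obstacle. The only points requiring care are the measurability and Fubini bookkeeping in the conditioning argument of the first reduction --- handled via a regular conditional distribution of $(W^{(2)}, \ldots, W^{(m)})$, which exists as these take values in a Polish space --- and, above all, quoting the precise form of Theorem~1.3 in \cite{jentzen2016slow}, since if that theorem did not already grant the observation of $(W_s)_{s \in [\varepsilon_n, T]}$ then \eqref{eq:thm:d4} would be strictly stronger than, rather than a consequence of, the cited result.
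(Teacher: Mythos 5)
Your proposal takes essentially the same approach as the paper: the paper offers no proof of Theorem~\ref{thm:d4} beyond the remark that it is implied by Theorem~1.3 in \cite{jentzen2016slow}, and the routine reductions you spell out (padding $\sigma$ with zero columns to pass from $m=1$ to general $m$, and embedding the four-dimensional example into the first four coordinates of $\R^d$ for general $d\geq 4$) are exactly the bookkeeping the paper leaves implicit. Your flagged caveat is also the right one to raise, and it is resolved favorably: Theorem~1.3 of \cite{jentzen2016slow} is stated so as to already grant the approximation access to the Brownian path on $[\varepsilon_n,T]$ (and, in fact, is already formulated for all $d\in\{4,5,\ldots\}$), so the deduction goes through as you describe.
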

In this paper we strengthen the above result  by proving that for every arbitrarily slow convergence speed and every natural number $d \in \{2,3,\ldots\}$ there exist $d$-dimensional SDEs with infinitely often
differentiable and globally bounded coefficients such that no approximation method based on finitely many observations of the driving Brownian motion can converge in absolute mean to the solution faster than the given speed of convergence. More precisely, in this work we establish the following theorem.

\begin{theorem}
\label{thm:intro}
Let $T \in (0, \infty)$, $\tau \in (0,T)$, $d \in \{2,3, \ldots\}$, $\xi \in \R^d$, $m \in \N$, $(\varepsilon_n)_{n \in \N} \subseteq (0, \tau]$, $(\delta_n)_{n \in \N} \subseteq [0,\infty)$ satisfy $\limsup_{n \to \infty} \delta_n = 0$. Then there exist infinitely often differentiable and globally bounded functions $\mu \colon \R^d \to \R^d$ and $\sigma \colon \R^{d} \to \R^{d \times m}$ such that for every probability space $(\Omega, \mathcal{F}, \P)$, every   normal filtration $\mathbb{F}=(\mathbb{F}_t)_{t \in [0,T]}$ on $(\Omega, \mathcal{F}, \P)$, every standard $(\Omega, \mathcal{F}, \P, \mathbb{F})$-Brownian motion $W \colon  [0,T] \times \Omega \to \R^m$, every continuous $\mathbb{F}$-adapted stochastic process $X \colon [0,T] \times \Omega \to \R^d$ with $\forall \, t \in [0,T] \colon \P(X_t = \xi + \int_0^t \mu(X_s) \, ds + \int_0^t \sigma(X_s) \, dW_s)=1$, and every $n \in \N$ it holds that 
\begin{align}
\label{eq:intro}
\inf_{\substack{a,b \in [0, \tau], \\
b-a \geq \varepsilon_n }}  \inf_{t_1, \ldots, t_n \in [0,T]} \inf_{\substack{u \colon (\R^m)^n \times C([0,a] \cup [b,T], \R^m)   \to \R^d \\  \text{ measurable}}}\E  \Big[\big\| X_T - u( W_{t_1},  \ldots, W_{t_n}, (W_s)_{s \in [0,a] \cup [b,T]})\big\|_{\R^d}\Big] \geq \delta_n.
\end{align}
\end{theorem}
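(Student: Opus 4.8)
The plan is to derive Theorem~\ref{thm:intro} from a single two-dimensional construction, by means of an elementary dimension reduction and an elementary reduction to an estimate at the intermediate time $\tau$.

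\textbf{Step 1 (reduction to $d=2$).} I would first observe that it suffices to prove the theorem for $d=2$. Indeed, let $\mu^{(2)}\colon\R^2\to\R^2$ and $\sigma^{(2)}\colon\R^2\to\R^{2\times m}$ be the coefficients furnished by the case $d=2$ for the initial value $(\xi_1,\xi_2)$. Given $d\in\{2,3,\ldots\}$ and $\xi=(\xi_1,\ldots,\xi_d)\in\R^d$, define $\mu\colon\R^d\to\R^d$ by $\mu(x)=(\mu^{(2)}_1(x_1,x_2),\mu^{(2)}_2(x_1,x_2),0,\ldots,0)$ and $\sigma\colon\R^d\to\R^{d\times m}$ by letting the top-left $2\times m$ block of $\sigma(x)$ be $\sigma^{(2)}(x_1,x_2)$ and all remaining entries vanish; these functions are infinitely often differentiable and globally bounded. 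Any continuous $\mathbb{F}$-adapted solution $X$ then satisfies, $\P$-a.s., $X^{(j)}_t=\xi_j$ for all $t\in[0,T]$ and all $j\in\{3,\ldots,d\}$, its first two coordinates form a solution of the two-dimensional SDE, and $\|X_T-u\|_{\R^d}\ge\|(X^{(1)}_T,X^{(2)}_T)-(u_1,u_2)\|_{\R^2}$ for every $u\in\R^d$; composing the infimum over measurable $u\colon(\R^m)^n\times C([0,a]\cup[b,T],\R^m)\to\R^d$ with the projection onto the first two coordinates then reduces \eqref{eq:intro} to its instance for $d=2$. I remark that for $d\ge4$ this is \emph{not} a consequence of Theorem~\ref{thm:d4}: the observation $\sigma$-algebra in \eqref{eq:intro} is strictly larger than the one in \eqref{eq:thm:d4}, so Theorem~\ref{thm:intro} is genuinely stronger and already requires a new construction in dimension four.

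\textbf{Step 2 (reduction to an estimate at time $\tau$).} For $d=2$ I would design the coefficients so that the evolution over $[\tau,T]$ is harmless for the approximation problem; concretely, so that $X_T$ is, $\P$-a.s., a bijective, $C^1$ and (with $\P$-integrable Lipschitz constants) bi-Lipschitz function of $X_\tau$. Since $b\le\tau<T$, the path $(W_s)_{s\in[\tau,T]}$ is always among the observed data in \eqref{eq:intro}, so this map and its inverse are measurable with respect to the observed data, and the left-hand side of \eqref{eq:intro} is bounded below, up to a fixed multiplicative constant, by $\inf_{a,b}\inf_{t_1,\ldots,t_n}\inf_v\E[\|X_\tau-v(W_{t_1},\ldots,W_{t_n},(W_s)_{s\in[0,a]\cup[b,\tau]})\|_{\R^2}]$; it thus suffices to bound this quantity below by $\delta_n$. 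The hypothesis $\tau<T$ is precisely what confines the hidden window $(a,b)$ to the interior $(0,\tau)$ and leaves a nondegenerate always-observed interval $[\tau,T]$ available for this readout.

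\textbf{Step 3 (the two-dimensional construction and the core estimate).} On $[0,\tau]$ the two components of $X$ will run a smoothed \emph{Clark--Cameron machine with path-dependent scale selection}: one component maintains a counter while the process performs, on successive dyadic subintervals, a sequence of tests on the increments of the driving Brownian motion, and at the first (random) scale at which a test is passed the two components carry out a Lévy-area-type iterated stochastic integration on a commensurately fine time grid, so that $X_\tau$ oscillates rapidly as a function of those fine-scale increments. Infinite differentiability and global boundedness of $\mu,\sigma$ are enforced by the customary truncation, cut-off and rescaling devices, and the general initial value $\xi\in\R^2$ is absorbed by an initial translation. I would then obtain the core estimate by conditioning on all observed data and showing that, on an event whose probability is bounded below (the bound depending on $n$ only through the smallness of $\delta_n$), the regular conditional law of $X_\tau$ is spread out on a scale of order $\delta_n$; any measurable estimator incurs in $L^1$ at least half this spread by the conditional-median argument, and one integrates. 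The spread originates from the Lévy area, over the hidden window $(a,b)$, of the conditioned driving Brownian motion, evaluated on a grid so fine that it remains independent of the finitely many additional observations $W_{t_1},\ldots,W_{t_n}$; the scale selector guarantees that, with the required probability, the machine is active at such a fine scale inside $(a,b)$ no matter where the window $(a,b)\subseteq(0,\tau)$ and the points $t_i$ lie, which is what defeats every finite-information method with a fixed SDE and produces the arbitrarily slow rate $(\delta_n)_{n\in\N}$.

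\textbf{Main obstacle.} The heart of the matter is the two-dimensional core estimate. In the four-dimensional setting behind Theorem~\ref{thm:d4} one can carry the counter, the record of passed tests, and the two coordinates of the Lévy-area module in separate components; here all of this bookkeeping must be folded into the two coordinates of a single autonomous, $C^\infty$, globally bounded SDE, and the construction must be bad for \emph{every} window $(a,b)\subseteq(0,\tau)$, not merely for the left-aligned window $[0,\varepsilon_n)$ of Theorem~\ref{thm:d4}. Handling this compression — keeping $\mu,\sigma$ smooth and bounded while the counter and the Lévy-area module share the state, making the scale selector robust with respect to the position of the window inside $(0,\tau)$, and controlling quantitatively the readout on $[\tau,T]$ of Step~2 — is where I expect the real work to lie; given the core estimate, Steps~1 and~2 together with a routine verification of the regularity of $\mu,\sigma$ would complete the proof.
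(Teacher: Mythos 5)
Your Step 1 (filling up drift and diffusion with zero entries to reduce to $d=2$, plus the translation to absorb $\xi$) is exactly what the paper does, and your parenthetical observation that Theorem~\ref{thm:intro} is not a corollary of Theorem~\ref{thm:d4} even for $d\geq4$ is correct. From there, however, your plan diverges from the paper in a way that opens a genuine gap.

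Your Step 2 is, in fact, the opposite of the paper's mechanism, not a reformulation of it. You propose to make $X_T$ a $\P$-a.s.\ bijective, bi-Lipschitz function of $X_\tau$ so that the readout on $[\tau,T]$ is ``harmless'' and the entire difficulty is pushed to producing a lower bound $\gtrsim\delta_n$ for the approximation error of $X_\tau$ itself. The paper does precisely the reverse: up to the intermediate time $\tau_1>\tau$ the second component is deterministically equal to $t$ and the first component is the explicit Gaussian $\int_0^{\tau_1}f(s)\,dW_s$; the portion of that Gaussian hidden by the window $(a,b)$ (the Brownian-bridge functional $Y_2$ in Lemma~\ref{lemma:BM:gen}) has variance of order $(b-a)^3\leq\varepsilon_n^3$, so the $L^1$ error at time $\tau_1$ is of order $\varepsilon_n^{3/2}$, which tends to zero \emph{faster} than an arbitrary $\delta_n$. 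The deterministic flow on $[\tau_1,T]$, namely $x\mapsto z_T(\cos(\psi(x)))$ with $\psi$ chosen piecewise ever steeper on blocks $[5m-2,5m+2]$, is the crucial ``magnifying glass'': it is very much not bi-Lipschitz (with constants independent of $n$), and it is exactly what converts the tiny hidden variance at $\tau_1$ into an order-one, $n$-independent lower bound at $T$ for the frequency scale $\varepsilon_n$ (Lemmas~\ref{lemma:lower:1:gen}--\ref{thm:lower:seq1}). If you insist on a bi-Lipschitz readout, you would need to build the whole slow-rate phenomenon into the dynamics on $[0,\tau]$, which the paper deliberately avoids.

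Your Step 3 therefore has to carry the entire weight of the argument, and it is here that the proposal stops being a proof. The ``Clark--Cameron machine with path-dependent scale selection, counter, and L\'evy-area module'' bears no resemblance to the paper's construction, which is a single stochastic integral against $W$ feeding a scalar ODE; there are no iterated integrals, no counter, no scale selector, and no conditional-median argument. Two concrete concerns: first, an honest L\'evy-area mechanism needs at least two independent driving Brownian components, but the theorem is required to hold (and the paper proves it) for $m=1$, where the area of $W$ against itself is a deterministic function of the endpoint and the time; second, the demand that the hidden-window conditional law of $X_\tau$ be spread out on a scale $\gtrsim\delta_n$, uniformly over the position of $(a,b)\subseteq[0,\tau]$ and over the $n$ point evaluations, for a single autonomous, $C^\infty$, globally bounded $2$-dimensional SDE, is exactly the hard phenomenon one is trying to produce, and you offer no quantitative mechanism for it beyond the words ``scale selector''. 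In short, Steps~2 and~3 together replace the paper's short, explicit construction with an unverified and structurally different one; the proposal as written does not establish the theorem.
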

Theorem~\ref{thm:intro} follows immediately from Corollary~\ref{cor:W} below. In the following we provide a  brief and rough intuition behind the proof of Theorem~\ref{thm:intro} and we also comment on the new ideas used in the proof of Theorem~\ref{thm:intro} which allow  to reduce the dimensionality from $d=4$ in \cite[Theorem~1.3]{jentzen2016slow} (and Theorem~\ref{thm:d4} above, respectively) to $d=2$ in Theorem~\ref{thm:intro} in this work. A key aspect in both proofs (proof of \cite[Theorem~1.3]{jentzen2016slow} and proof of Theorem~\ref{thm:intro} in this work) is to construct the SDE for \cite[Theorem~1.3]{jentzen2016slow} and Theorem~\ref{thm:intro} in such a way that it admits different phases along the time evolution in which it behaves conceptually differently. In the first phase the SDE is designed in such a way  that all numerical schemes of the form appearing in \eqref{eq:thm:d4} and \eqref{eq:intro}, respectively, approximate the solution of the SDE strongly with a possibly small but non-neglectable error. The phases in the SDE thereafter are then employed to switch smoothly from the first phase to the last phase. The last phase, in turn, consists of the dynamics of an SDE which acts, roughly speaking, as a magnifying glass which increases the possibly small error in the first phase to an error with arbitrarily slow strong convergence speed. In the previous work \cite{jentzen2016slow} one of the components of the SDE has been employed to describe the time variable which, in turn, allows to timely switch between the different phases. A key idea of this work is to design the SDE in such a way  that the time variable is incorporated into the magnifying glass and thereby allowed us  to reduce the dimensionality of the SDE system.

Next we would like to point out that Theorem~\ref{thm:d4} and Theorem~\ref{thm:intro} both assume that the sequence $(\varepsilon_n)_{n \in \N}$ of real numbers appearing in Theorems~\ref{thm:d4} and \ref{thm:intro}, respectively,  is strictly positive. Note that this hypothesis can not be omitted as the solution $X_T$ is $\P$-almost surely equal to $u((W_s)_{s \in [0,T]})$ for some measurable function $u \colon C([0,T], \R^m) \to \R{^d}$ (cf., e.g., \eqref{eq:meas} in Lemma~\ref{lem:main} below). We also would  like to add that Theorem~\ref{thm:intro}, in particular, ensures that for every natural number $d \in \{2,3, \ldots\}$ there exist $d$-dimensional SDEs with infinitely often
differentiable and globally bounded coefficients such that no approximation method based on finitely many observations of the driving Brownian motion converges with any polynomial order of convergence. The precise statement of this fact is the subject of the following corollary of Theorem~\ref{thm:intro}.
\begin{cor}
\label{cor:intro}
Let $T \in (0, \infty)$, $\tau \in (0,T)$, $d \in \{2,3, \ldots\}$, $\xi \in \R^d$,  $(\varepsilon_n)_{n \in \N} \subseteq (0, \tau]$. Then there exist infinitely often differentiable and globally bounded functions $\mu \colon \R^d \to \R^d$ and $\sigma \colon \R^{d} \to \R^{d}$ such that for every probability space $(\Omega, \mathcal{F}, \P)$, every   normal filtration $\mathbb{F}=(\mathbb{F}_t)_{t \in [0,T]}$ on $(\Omega, \mathcal{F}, \P)$, every standard $(\Omega, \mathcal{F}, \P, \mathbb{F})$-Brownian motion $W \colon  [0,T] \times \Omega \to \R$, every continuous $\mathbb{F}$-adapted stochastic process $X \colon [0,T] \times \Omega \to \R^d$ with $\forall \, t \in [0,T] \colon \P(X_t = \xi + \int_0^t \mu(X_s) \, ds + \int_0^t \sigma(X_s) \, dW_s)=1$, and every $r \in (0,\infty)$ it holds that 
\begin{align}
\liminf_{n \to \infty} \left( n^r\inf_{\substack{a,b \in [0, \tau], \\
b-a \geq \varepsilon_n }}  \inf_{t_1, \ldots, t_n \in [0,T]} \inf_{\substack{u \colon \R^n \times C([0,a] \cup [b,T], \R)   \to \R^d \\  \text{ measurable}}}\E  \Big[\big\| X_T - u( W_{t_1},  \ldots, W_{t_n}, (W_s)_{s \in [0,a] \cup [b,T]})\big\|_{\R^d}\Big] \right) = \infty.
\end{align}
\end{cor}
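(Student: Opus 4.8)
The plan is to derive Corollary~\ref{cor:intro} as a direct consequence of Theorem~\ref{thm:intro} by a suitable choice of the sequence $(\delta_n)_{n \in \N}$. The statement of Corollary~\ref{cor:intro} differs from Theorem~\ref{thm:intro} in three respects: first, the driving Brownian motion is scalar ($m=1$) and $\sigma$ takes values in $\R^d$ rather than $\R^{d \times m}$; second, instead of a lower bound by $\delta_n$ for a fixed sequence, we need the liminf of $n^r$ times the approximation error to be $+\infty$ for every $r \in (0,\infty)$; third, this single SDE must work simultaneously for all $r$. All three are handled by invoking Theorem~\ref{thm:intro} once with $m=1$ and with a cleverly chosen $(\delta_n)_{n \in \N}$ that decays to zero but more slowly than any polynomial.

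Concretely, first I would fix $T$, $\tau$, $d$, $\xi$, and $(\varepsilon_n)_{n \in \N} \subseteq (0,\tau]$ as in Corollary~\ref{cor:intro} and set $m = 1$. Then I would define, for instance, $\delta_n = \tfrac{1}{\ln(n+2)}$ (or any explicit sequence in $[0,\infty)$ with $\limsup_{n \to \infty} \delta_n = 0$ — indeed $\lim_{n\to\infty}\delta_n = 0$ — and $\lim_{n \to \infty} n^r \delta_n = \infty$ for every $r \in (0,\infty)$). Applying Theorem~\ref{thm:intro} with these data yields infinitely often differentiable and globally bounded functions $\mu \colon \R^d \to \R^d$ and $\sigma \colon \R^d \to \R^{d \times 1} \cong \R^d$ such that for every probability space, normal filtration, scalar Brownian motion $W$, and every continuous $\mathbb{F}$-adapted solution process $X$, the lower bound \eqref{eq:intro} holds for every $n \in \N$. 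Identifying $\R^{d \times 1}$ with $\R^d$ in the obvious way, these are exactly the $\mu$ and $\sigma$ required in Corollary~\ref{cor:intro}.

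The final step is then purely elementary: for every $r \in (0,\infty)$ and every $n \in \N$ we multiply \eqref{eq:intro} by $n^r$ to obtain that
\begin{equation*}
n^r\inf_{\substack{a,b \in [0, \tau], \\ b-a \geq \varepsilon_n }}  \inf_{t_1, \ldots, t_n \in [0,T]} \inf_{\substack{u \colon \R^n \times C([0,a] \cup [b,T], \R)   \to \R^d \\  \text{ measurable}}}\E  \Big[\big\| X_T - u( W_{t_1},  \ldots, W_{t_n}, (W_s)_{s \in [0,a] \cup [b,T]})\big\|_{\R^d}\Big] \geq n^r \delta_n ,
\end{equation*}
and since $\lim_{n \to \infty} n^r \delta_n = \infty$ for the chosen sequence, taking the liminf of the left-hand side gives $+\infty$, which is the assertion of Corollary~\ref{cor:intro}. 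I do not expect any genuine obstacle here; the only point requiring a moment's care is the bookkeeping that the constructed $\sigma$ can legitimately be viewed as a map into $\R^d$ (since $m=1$) and that a single sequence $(\delta_n)_{n \in \N}$ can simultaneously beat all polynomial rates, which is why a logarithmically (or even more slowly) decaying choice is used rather than any power of $1/n$.
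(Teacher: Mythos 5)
Your proposal is exactly the paper's own argument: the paper states that Corollary~\ref{cor:intro} follows from Theorem~\ref{thm:intro} by choosing $m=1$ and $\delta_n = \tfrac{1}{\ln(n+1)}$, which decays to $0$ more slowly than any polynomial, so that $n^r\delta_n \to \infty$ for every $r\in(0,\infty)$. Your choice $\delta_n = \tfrac{1}{\ln(n+2)}$ is an immaterial variant, and the identification $\R^{d\times 1}\cong\R^d$ is the same routine bookkeeping step the paper implicitly uses.
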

Corollary~\ref{cor:intro} is a direct consequence of Theorem~\ref{thm:intro} above (choose $m=1$ and $\delta_n = \frac{1}{\ln(n+1)}$ for  $n \in \N$ in the notation of Theorem~\ref{thm:intro}). We also would like to point out that the main contribution of this work is to establish Theorem~\ref{thm:intro} in the case $d=2$ (cf.\ Corollary~\ref{cor:gen:C} below). Roughly speaking, the general case $d \in \{2,3,\ldots\}$ then follows from the case $d=2$ by filling up drift and diffusion coefficients  with zero entries. In addition, observe that in the deterministic case $(\sigma = ( \R^d \ni x \mapsto 0 \in \R^d))$ a slow convergence phenomena of the type \eqref{eq:intro} fails to hold as the standard Euler scheme is known to converge with order $1$ if $\mu$ is locally Lipschitz continuous and if a solution of the ordinary differential equation (ODE) does exist on the time interval $[0,T]$. 

Further lower error bounds for strong and weak numerical approximation schemes for SDEs with non-globally Lipschitz continuous coefficients can be found in \cite{hjk11,HutzenthalerJentzenKloeden2013,Hairer2015,jentzen2016slow,MullerGronbachYaroslavtseva2017,Yaroslavtseva2016}.  Hairer et al.~\cite[Theorem~1.3]{Hairer2015} and M\"uller-Gronbach \& Yaroslavtseva~\cite[Theorems 1--3]{MullerGronbachYaroslavtseva2017} deal with lower bounds for weak approximation errors and Yaroslavtseva~\cite[Corollary~2]{Yaroslavtseva2016} extends \cite[Theorem~1.3]{jentzen2016slow} (cf. also Theorem~\ref{thm:d4} above) to  numerical approximation schemes where the driving Brownian motion can be evaluated adaptively. Each of the references \cite{Hairer2015,jentzen2016slow,MullerGronbachYaroslavtseva2017,Yaroslavtseva2016} assumes beside other hypotheses that the dimension $d$ of the considered SDE satisfies $d \geq 4$. The main contribution of this work is to reveal that a slow convergence phenomena of the form \eqref{eq:intro}  also arises in two $(d=2)$ and three $( d=3)$ space dimensions. Upper error bounds and numerical approximation schemes for SDEs with non-globally Lipschitz continuous coefficients can, e.g., be found in \cite{Hu1996,Gyoengy1998,Higham2002,HutzenthalerJentzenKloeden2012,WangGan2013,Hutzenthaler2015,Sabanis2013,sabanis2016,TretyakovZhang2013} and  the references mentioned therein. Lower error bounds for strong approximation schemes for SDEs with globally Lipschitz continuous coefficients can, e.g., be found in the overview article M\"uller-Gronbach \& Ritter~\cite{Mueller-Gronbach2008}  and the references mentioned therein.

A fundamental long term goal in the numerical analysis of SDEs is to characterize strong/weak convergence rates for numerical approximations of SDEs in terms of explicit conditions on the coefficient functions of the SDE under consideration. In particular, it is of fundamental importance in this research area to reveal explicit conditions on the coefficients of the SDE which are both necessary and sufficient for numerical approximations to converge with positive strong/weak convergence rates. There are a number of articles in the literature which provide sufficient conditions for strong convergence rates for numerical approximations  (cf., e.g., \cite{Hu1996,Gyoengy1998,Higham2002,HutzenthalerJentzenKloeden2012,WangGan2013,Hutzenthaler2015,Sabanis2013,sabanis2016,TretyakovZhang2013} and  the references mentioned therein). These  conditions are far from being necessary for strong convergence rates. A key contribution of the lower bounds obtained in the above mentioned references \cite{ Hairer2015,jentzen2016slow,MullerGronbachYaroslavtseva2017,Yaroslavtseva2016} as well as in this work is to develop a better understanding of possible necessary and sufficient conditions for strong or weak convergence rates.


\section{Construction of the coefficients of the considered two-dimensional SDEs}
\label{sec:existence}

In this section we establish two elementary auxiliary results (see Lemma~\ref{lemma:f} and Lemma~\ref{lemma:g} below) which demonstrate that the functions $f, g \colon \R \to \R$ in \eqref{eq:setting:f} and \eqref{eq:setting:g} below have suitable regularity properties.

\subsection{Setting}
\label{setting:fg}

Let $T, \mu \in (0,\infty)$, $\tau, \tau_1 \in (0,T)$, $\tau_2 \in (\tau_1,T)$, $\varepsilon \in (0, \min\{T-\tau, \tau(1- 2^{\nicefrac{-1}{3}})\})$, $F ,\rho,h,  f, g\in C(\R,\R)$  satisfy for all $x \in \R$ that
\begin{align}
\tau_1 = \tau + \varepsilon, \qquad \mu = \int_{-\varepsilon}^{\varepsilon} \exp\!\Big(\tfrac{-1}{(\varepsilon^2 - t^2)}\Big) \, dt,
\end{align}
\begin{align}
F(x)  = \begin{cases}
4 \tau &\colon \quad x \leq -\tau \\
2\tau -2x &\colon \quad -\tau < x < \tau\\
0 &\colon \quad x \geq \tau
\end{cases},
\end{align}
\begin{align}
\rho(x)= \begin{cases} 
\tfrac{1}{\mu}  \exp\!\Big(\tfrac{-1}{(\varepsilon^2 - x^2)}\Big) & \colon  \quad |x| < \varepsilon \\
0 & \colon \quad  |x| \geq \varepsilon
\end{cases},
\end{align}

\begin{align}
h(x) = \begin{cases} 
\exp\!\left(-\tfrac{1}{x}\right) & \colon \quad x >0 \\
0 & \colon \quad  \, x \leq 0 
\end{cases},
\end{align}
\begin{align}
\label{eq:setting:f}
f(x)  = \int_{-\infty}^{\infty} \rho(t) \, F(x-t) \, dt,
\end{align}
and 
\begin{align}
\label{eq:setting:g}
\begin{split}
g(x) = \frac{4h(x-\tau_1)}{h(x-\tau_1)+ h(\tau_2-x)}.
\end{split}
\end{align}

\subsection{Properties of the function appearing in the first component of the considered two-dimensional SDE}

The next result, Lemma~\ref{lemma:f}, establishes a few elementary (regularity) properties of the function $f\colon \R \to \R$ in \eqref{eq:setting:f} in Section~\ref{setting:fg}.

\begin{lemma}
\label{lemma:f}
Assume the setting in Section~\ref{setting:fg}. Then
\begin{enumerate}[(i)]
\item \label{item:f:b} it holds that $\sup_{x \in \R} |f(x)| < \infty$,
\item \label{item:f:lb} it holds that $f((-\infty,\tau_1)) \subseteq (0,\infty)$,
\item \label{item:f:ub} it holds that $f([\tau_1,\infty)) = \{0\}$,
\item \label{item:f:0} it holds that $f'(\R) \subseteq [-2,0]$,
\item \label{item:f:-1} it holds that $f'((0,\tau)) \subseteq [-2, -1)$, and 
\item \label{item:f:int} it holds that $  \int_0^{\tau_1} |f(s)|^2 \, ds \geq \frac{2\tau^3}{3}$.
\end{enumerate}
\end{lemma}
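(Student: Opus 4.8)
The plan is to exploit that $f=\rho*F$ is the mollification of the piecewise linear function $F$ by the even probability density $\rho$, which is supported on $[-\varepsilon,\varepsilon]$, and to read off each assertion from the local shape of $F$ together with the identity $f'=\rho*F'$ (legitimate because $\rho$ is smooth with compact support and $F$ is Lipschitz continuous). At the outset I would record the elementary facts that $0\le F\le 4\tau$ on $\R$, that $F'(y)=-2$ for a.e.\ $y\in(-\tau,\tau)$ and $F'(y)=0$ for a.e.\ $y\in\R$ with $|y|>\tau$, that $\int_{-\varepsilon}^{\varepsilon}\rho(t)\,dt=1$ with $\rho>0$ on $(-\varepsilon,\varepsilon)$, and that the evenness of $\rho$ yields $\int_{-\varepsilon}^{0}\rho(t)\,dt=\tfrac12$ and $\int_{-\varepsilon}^{\varepsilon}t\,\rho(t)\,dt=0$.

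The claims (i)--(iii) are then essentially immediate. For (i) one estimates $|f(x)|\le\int_{-\varepsilon}^{\varepsilon}\rho(t)\,|F(x-t)|\,dt\le 4\tau$ for every $x\in\R$. For (iii), if $x\ge\tau_1=\tau+\varepsilon$ then $x-t\ge\tau$ for all $t\in[-\varepsilon,\varepsilon]$, hence $F(x-t)=0$ and $f(x)=0$. For (ii), if $x<\tau_1$ then the set $\{t\in(-\varepsilon,\varepsilon):t>x-\tau\}$ has positive Lebesgue measure because $x-\tau<\varepsilon$, and on this set both $\rho(t)>0$ and $F(x-t)>0$; since moreover $\rho\ge0$ and $F\ge0$ everywhere, it follows that $f(x)>0$.

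For (iv) I would write $f'(x)=\int_{-\varepsilon}^{\varepsilon}\rho(t)\,F'(x-t)\,dt$ (obtained by differentiating under the integral sign and integrating by parts) and note that $F'(x-t)\in[-2,0]$ while $\rho\ge0$ has total mass $1$, so $f'(x)\in[-2,0]$. For (v), fix $x\in(0,\tau)$; for $t\in(-\varepsilon,\varepsilon)$ the inequality $x-t>-\tau$ holds automatically since $x+\tau>\tau>\varepsilon$, whereas $x-t<\tau$ is equivalent to $t>x-\tau$, so $F'(x-t)=-2$ when $t>x-\tau$ and $F'(x-t)=0$ otherwise, whence $f'(x)=-2\int_{\max\{-\varepsilon,\,x-\tau\}}^{\varepsilon}\rho(t)\,dt$. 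If $x\le\tau-\varepsilon$ this equals $-2$, and if $\tau-\varepsilon<x<\tau$ then $x-\tau\in(-\varepsilon,0)$, so by evenness $\int_{x-\tau}^{\varepsilon}\rho(t)\,dt=1-\int_{-\varepsilon}^{x-\tau}\rho(t)\,dt>1-\int_{-\varepsilon}^{0}\rho(t)\,dt=\tfrac12$ and hence $f'(x)<-1$; together with (iv) this proves $f'((0,\tau))\subseteq[-2,-1)$.

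For (vi) the key observation is that on $[0,\tau-\varepsilon]$ the relevant arguments of $F$ stay where $F$ is affine: for $s\in[0,\tau-\varepsilon]$ and $t\in(-\varepsilon,\varepsilon)$ one has $s-t\in(-\tau,\tau)$, so $F(s-t)=2\tau-2(s-t)$ and hence $f(s)=\int_{-\varepsilon}^{\varepsilon}\rho(t)\,(2\tau-2s+2t)\,dt=2\tau-2s$. Therefore
\[
\int_0^{\tau_1}|f(s)|^2\,ds\ \ge\ \int_0^{\tau-\varepsilon}(2\tau-2s)^2\,ds\ =\ \tfrac{4}{3}\bigl(\tau^3-\varepsilon^3\bigr),
\]
and since $1-2^{-1/3}<2^{-1/3}$ the standing hypothesis $\varepsilon<\tau(1-2^{-1/3})$ forces $\varepsilon<\tau\,2^{-1/3}$, i.e.\ $\varepsilon^3<\tfrac{\tau^3}{2}$, so that $\tfrac{4}{3}(\tau^3-\varepsilon^3)>\tfrac{4}{3}\cdot\tfrac{\tau^3}{2}=\tfrac{2\tau^3}{3}$. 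The step I expect to be most delicate is the case analysis near the kink $x=\tau$ of $F$ in part (v): one must identify precisely the sub-interval of $(-\varepsilon,\varepsilon)$ on which $F'(x-\cdot)$ vanishes and then invoke the evenness of $\rho$ (the splitting of its mass into two equal halves) so as to cross the threshold $-1$ strictly rather than with equality. Everything else is either a direct estimate or an instance of the familiar fact that convolving an affine function with an even probability density reproduces that affine function; the only mild technical point is the justification of $f'=\rho*F'$ for the merely Lipschitz $F$, which is routine since $\rho$ is smooth with compact support.
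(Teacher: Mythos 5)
Your proof is correct and follows essentially the same route as the paper: read off all six claims from the mollification identity $f=\rho*F$ and the explicit form of $F'$. The only noteworthy variation is in item~(vi), where you use the evenness of $\rho$ to obtain the exact identity $f(s)=2\tau-2s$ on $[0,\tau-\varepsilon]$ and then integrate $(2\tau-2s)^2$, whereas the paper settles for the one-sided bound $f(s)\ge 2\tau-2s-2\varepsilon$ and integrates $(2\tau-2s-2\varepsilon)^2$; both yield $\ge \tfrac{2\tau^3}{3}$ after invoking $\varepsilon<\tau(1-2^{-1/3})$, and your version is marginally sharper but not conceptually different.
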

\begin{proof}[Proof of Lemma~\ref{lemma:f}]
Throughout this proof let $\lambda \colon \mathcal{B}(\R) \to [0,\infty]$ be the Lebesgue-Borel measure on $\R$. Note that 
\begin{align}
\begin{split}
\sup_{x \in \R} |f(x)| &\leq \left[ \sup_{x \in \R} \big|F(x)\big| \right] \! \left[ \int_{-\infty}^{\infty} \rho(t) \, dt \right] =  4 \tau \left[ \int_{-\infty}^{\infty} \rho(t) \, dt \right]\\
&= \tfrac{4 \tau}{\mu} \int_{-\varepsilon}^{\varepsilon} \exp\!\Big(\tfrac{-1}{(\varepsilon^2 - t^2)}\Big) \, dt =  4\tau < \infty.
\end{split}
\end{align}
This establishes Item~\eqref{item:f:b}. Next note that for all $x \in \R$ it holds that
\begin{align}
\label{eq:f:int}
\begin{split}
f(x) & = \int_{-\infty}^{\infty} \rho(t) \, F(x-t) \, dt = \int_{-\varepsilon}^{\varepsilon} \rho(t) \, F(x-t)  \, dt\\
& = \int_{-\varepsilon}^{\varepsilon} \rho(t) \, F(x-t) \, \mathbbm{1}_{(-\infty,\tau)}(x-t) \, dt\\
& = \int_{-\varepsilon}^{\varepsilon} \rho(t) \, F(x-t) \,  \mathbbm{1}_{(x-\tau,\infty)}(t) \, dt\\
& = \int_{-\varepsilon}^{\varepsilon} \rho(t) \, F(x-t) \, \mathbbm{1}_{(x-\tau_1 + \varepsilon,\infty)}(t) \, dt.
\end{split}
\end{align}
This  proves Item~\eqref{item:f:ub}. Moreover, observe that for all $x \in (-\infty, \tau_1)$ it holds that
\begin{align}
\label{eq:lambda}
\lambda((-\varepsilon,\varepsilon) \cap (x-\tau_1 +\varepsilon, \infty)) > 0
\end{align}
and 
\begin{align}
\label{eq:rho:F}
\forall \, t \in (-\varepsilon,\varepsilon) \cap (x-\tau_1 +\varepsilon, \infty) \colon \rho(t) \, F(x-t) >0.
\end{align}
Combining \eqref{eq:lambda} and \eqref{eq:rho:F} with \eqref{eq:f:int} yields that  for all $x \in (-\infty, \tau_1)$ it holds that $f(x) > 0$. This establishes Item~\eqref{item:f:lb}. Next observe that  for all $x \in \R$ it holds that 
\begin{align}
\label{eq:f:der}
\begin{split}
f'(x) &= \int_{\R \setminus \{x-\tau,x+\tau\}} \rho(t) \, F'(x-t) \, dt= \int_{\R \setminus \{-\tau,\tau\}} \rho(x-t) \, F'(t) \, dt  \\
&= -2 \int_{-\tau}^{\tau} \rho(x-t) \,dt = -2 \int_{x-\tau}^{x+\tau} \underbrace{\rho(t)}_{\geq 0} \, dt \geq -2 \int_{-\infty}^{\infty} \rho(t) \,dt = -2.
\end{split}
\end{align}
 This  proves Item~\eqref{item:f:0}. In addition, observe that \eqref{eq:f:der} ensures for all $x \in (0,\tau)$  that
\begin{align}
\begin{split}
f'(x)& = -2 \int_{x-\tau}^{x+\tau} \rho(t) \,dt = -2 \int_{x-\tau}^{0} \rho(t) \,dt -2 \int_{0}^{\varepsilon} \rho(t) \,dt\\
& = -2 \int_{x-\tau}^{0} \rho(t) \,dt -1 < -1.
\end{split}
\end{align}
This establishes Item~\eqref{item:f:-1}. Next note that \eqref{eq:f:int}  yields that for all $x \in (0, \tau -\varepsilon)$ it holds that 
\begin{align}
\begin{split}
f(x) &= \int_{-\varepsilon}^{\varepsilon} \rho(t) \, F(x-t)\, dt = \int_{-\varepsilon}^{\varepsilon} \rho(t) \, (2\tau-2(x-t)) \, dt \\ & =\int_{-\varepsilon}^{\varepsilon} \rho(t) \, (2\tau-2x+2t) \, dt \geq (2\tau-2x-2\varepsilon) \int_{-\varepsilon}^{\varepsilon} \rho(t) \, dt \\
&= (2\tau-2x-2\varepsilon).
\end{split}
\end{align}
Hence, we obtain that
\begin{align}
\begin{split}
\int_0^{\tau_1} |f(s)|^2 \, ds  & \geq  \int_0^{\tau -\varepsilon} |f(s)|^2 \, ds \geq \int_0^{\tau-\varepsilon} (2\tau-2s-2\varepsilon)^2 \, ds \\
&= 4 \int_0^{\tau-\varepsilon} (\tau-\varepsilon-s)^2 \, ds = 4 \int_0^{\tau-\varepsilon} s^2 \, ds  \\
&= \frac{4(\tau-\varepsilon)^3}{3} \geq \frac{4}{3} \left[ \tau - \tau(1- 2^{\nicefrac{-1}{3}}) \right]^3  \\
& = \frac{4\tau^3}{3} \left[1-1+2^{\nicefrac{-1}{3}}\right]^3= \frac{4\tau^3}{3} \cdot \frac{1}{2} = \frac{2\tau^3}{3}.
\end{split}
\end{align}
This demonstrates Item~\eqref{item:f:int}. The proof of Lemma~\ref{lemma:f} is thus completed.
\end{proof}

\subsection{Properties of the function appearing in the second component of the considered two-dimensional SDE}

The next result, Lemma~\ref{lemma:g}, establishes a few elementary (regularity) properties of the function $g\colon \R \to \R$ in \eqref{eq:setting:g} in Section~\ref{setting:fg}.

\begin{lemma}
\label{lemma:g}
Assume the setting in Section~\ref{setting:fg}. Then  
\begin{enumerate}[(i)]
\item \label{item:g:0} it holds that $g((-\infty,\tau_1]) = \{0\}$,
\item \label{item:g:4} it holds that $g([ \tau_2, \infty)) = \{4\}$, 
\item \label{item:g:mon} it holds that $g'(\R) \subseteq [0, \infty)$, 
\item \label{item:g:st:mon} it holds that $g'((\tau_1,\tau_2)) \subseteq (0, \infty)$, and
\item \label{item:g:b} it holds that $\sup_{x \in \R} |g(x)| < \infty$.
\end{enumerate}
\end{lemma}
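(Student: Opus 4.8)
The plan is to deduce all five assertions from the behaviour of the building block $h$ from Section~\ref{setting:fg}, namely that $h(y)=0$ for $y\le 0$, that $h(y)>0$ for $y>0$, and that $h\in C^\infty(\R,\R)$ (the last being a classical fact). First I would observe that, since $\tau_2\in(\tau_1,T)$ in the setting of Section~\ref{setting:fg}, for every $x\in\R$ the numbers $x-\tau_1$ and $\tau_2-x$ cannot both be nonpositive; hence $h(x-\tau_1)+h(\tau_2-x)>0$ for all $x\in\R$. Consequently $g$ in \eqref{eq:setting:g} is a well-defined quotient of $C^\infty$ functions with nowhere-vanishing denominator, so $g\in C^\infty(\R,\R)$; moreover $0\le h(x-\tau_1)\le h(x-\tau_1)+h(\tau_2-x)$ yields $g(\R)\subseteq[0,4]$, which already establishes Item~\eqref{item:g:b}.

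Next I would verify Items~\eqref{item:g:0} and \eqref{item:g:4} by direct substitution. If $x\le\tau_1$, then $x-\tau_1\le 0$, so the numerator $4h(x-\tau_1)$ of $g(x)$ vanishes while the denominator is positive, whence $g(x)=0$; this gives Item~\eqref{item:g:0}. If $x\ge\tau_2$, then $\tau_2-x\le 0$, so $h(\tau_2-x)=0$, while $x-\tau_1\ge\tau_2-\tau_1>0$ forces $h(x-\tau_1)>0$; hence $g(x)=\frac{4h(x-\tau_1)}{h(x-\tau_1)}=4$, which is Item~\eqref{item:g:4}.

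For the two derivative statements I would split $\R$ according to the three regions. On $(-\infty,\tau_1)$ and on $(\tau_2,\infty)$ the function $g$ is constant by Items~\eqref{item:g:0}--\eqref{item:g:4}, so $g'=0$ there. On the open interval $(\tau_1,\tau_2)$ both $h(x-\tau_1)$ and $h(\tau_2-x)$ are strictly positive, so I would write
\[
g(x)=\frac{4}{1+r(x)}, \qquad r(x):=\frac{h(\tau_2-x)}{h(x-\tau_1)}=\exp\!\Big(\tfrac{1}{x-\tau_1}-\tfrac{1}{\tau_2-x}\Big),
\]
and note that, as $x$ increases through $(\tau_1,\tau_2)$, both $\tfrac{1}{x-\tau_1}$ and $-\tfrac{1}{\tau_2-x}$ strictly decrease, so the exponent, hence $r$, is strictly decreasing and smooth with $r'<0$; therefore $g'(x)=\frac{-4r'(x)}{(1+r(x))^2}>0$ on $(\tau_1,\tau_2)$, which proves Item~\eqref{item:g:st:mon}. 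Since $g\in C^\infty(\R,\R)$, the derivative $g'$ is continuous and nonnegative on $\R\setminus\{\tau_1,\tau_2\}$, so by continuity $g'(\tau_1)\ge 0$ and $g'(\tau_2)\ge 0$ as well, giving Item~\eqref{item:g:mon}.

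I expect the only point needing a little care to be the justification that $g$ is $C^\infty$ across the gluing points $\tau_1$ and $\tau_2$, so that the piecewise information assembles into the global statement $g'(\R)\subseteq[0,\infty)$; this reduces to the classical smoothness of $h$ together with the positivity of the denominator noted in the first step. All remaining steps are elementary substitutions.
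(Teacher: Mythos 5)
Your proof is correct, and for Items~(iii)--(v) it takes a genuinely different route than the paper. The paper computes $g'$ directly via the quotient rule, simplifies the numerator to $4h'(x-\tau_1)h(\tau_2-x)+4h(x-\tau_1)h'(\tau_2-x)$, and concludes $g'\geq 0$ from $h'\geq 0$ everywhere; strict positivity on $(\tau_1,\tau_2)$ then follows because both summands are strictly positive there. Item~(v) is deduced in the paper from Items~(i)--(iii). You instead split $\R$ into regions, exploit the explicit form $r(x)=\exp\!\big(\tfrac{1}{x-\tau_1}-\tfrac{1}{\tau_2-x}\big)$ on $(\tau_1,\tau_2)$ to see that $g=4/(1+r)$ with $r$ strictly decreasing, and patch in the boundary derivatives by continuity; Item~(v) you get directly from $0\le h(x-\tau_1)\le h(x-\tau_1)+h(\tau_2-x)$. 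The paper's single-formula argument is a bit more uniform (no case split, no continuity step across $\tau_1,\tau_2$) and only uses $h'\ge 0$, while yours is more concrete and makes the strict monotonicity of $g$ on $(\tau_1,\tau_2)$ especially transparent by reducing it to monotonicity of a scalar exponent. Both are fine; just be aware that your patching step at $\tau_1,\tau_2$ does rely on the observation that $g\in C^\infty(\R,\R)$, which you correctly established at the outset.
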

\begin{proof}[Proof of Lemma~\ref{lemma:g}]
First, note that for all $x \in (-\infty,\tau_1]$ it holds that $h(x-\tau_1)=0$ and $h(\tau_2-x) >0$. This proves Item~\eqref{item:g:0}. Next observe that for all $x \in [\tau_2, \infty)$ it holds  that $h(\tau_2-x)=0$ and $h(x-\tau_1)>0$. This demonstrates that for all $x \in [\tau_2, \infty)$ it holds that
\begin{align}
g(x) =  \frac{4h(x-\tau_1)}{h(x-\tau_1)} = 4.
\end{align}
This proves Item~\eqref{item:g:4}. In the next step we note that the fact that $\forall \, x \in \R \colon h'(x) \geq 0$ ensures that for all $x \in \R$ it holds that 
\begin{align}
\begin{split}
g'(x) &= \frac{4h'(x-\tau_1)(h(x-\tau_1)+h(\tau_2-x))-4h(x-\tau_1)(h'(x-\tau_1)-h'(\tau_2-x))}{(h(x-\tau_1)+h(\tau_2-x))^2}\\
& = \frac{4h'(x-\tau_1)h(\tau_2-x)+4h(x-\tau_1)h'(\tau_2-x)}{(h(x-\tau_1)+h(\tau_2-x))^2}  \geq 0.
\end{split}
\end{align}
This proves Items~\eqref{item:g:mon}--\eqref{item:g:st:mon}. Item~\eqref{item:g:b} is an immediate consequence of Items~\eqref{item:g:0}--\eqref{item:g:mon}. The proof of Lemma~\ref{lemma:g} is thus completed.
\end{proof}

\subsection{A concrete example for the functions appearing in the considered two-dimensional SDE}
\label{sub:concrete}
Assume the setting in Section~\ref{setting:fg} and assume that
\begin{align}
T = \frac{3}{2}, \qquad \tau = \frac{3}{4}, \qquad \varepsilon = \frac{4 \min\{T-\tau, \tau(1- 2^{\nicefrac{-1}{3}})\}}{5}, \qquad \text{and} \qquad  \tau_2 = \tau_1 + \frac{4 (T-\tau_1)}{5}.
\end{align}
Observe that these hypotheses ensure that
\begin{align}
\begin{split}
\varepsilon = \frac{4\tau(1- 2^{\nicefrac{-1}{3}})}{5} =  \frac{3(1- 2^{\nicefrac{-1}{3}})}{5} \approx 0.1238, \qquad \tau_1 = \tau+\varepsilon \approx 0.8738,
\end{split}
\end{align} 
and 
\begin{align}
 \mu = \int_{-\varepsilon}^{\varepsilon} \exp\!\Big(\tfrac{-1}{(\varepsilon^2 - t^2)}\Big) \, dt \approx 1.2 \cdot 10^{-30}.
\end{align}
In Figure~\ref{fig:fg} we approximately plot $f(x)$ and $g(x)$ against $x \in [-\frac{85}{100},\frac{16}{10} ]$.

\begin{figure}
	\centering
	\includegraphics[width=.9\textwidth]{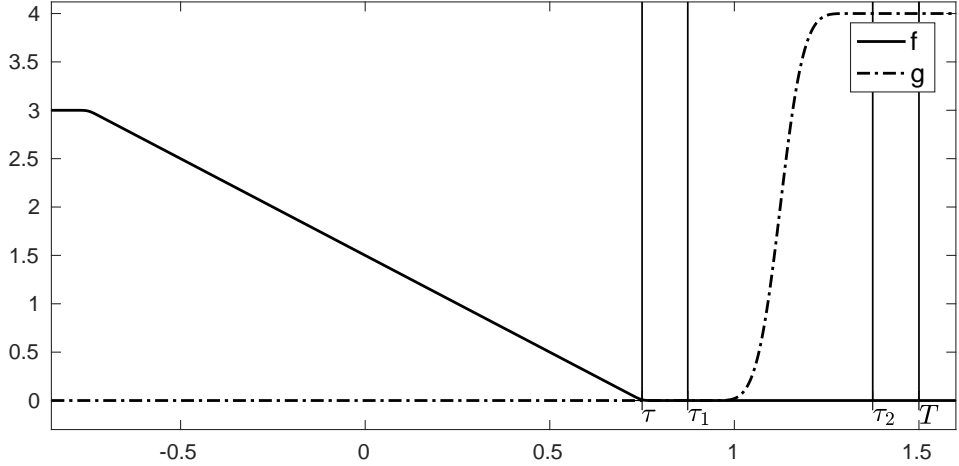}
	\caption{Plot of the functions $f$ and $g$ from Subsection~\ref{sub:concrete}}
	\label{fig:fg}
\end{figure}


\section{Lower bounds for strong approximation errors}

\subsection{Setting}
\label{setting:gen}

Let $T \in (0, \infty)$, $\tau \in (0, T)$, $\tau_1 \in (\tau,T)$, $\tau_2 \in (\tau_1,T)$, $\alpha \in [\frac{2\tau^3}{3}, \infty)$,  $f,g \in C^{\infty}(\R,\R)$  satisfy $\sup_{x \in \R} (|f(x)|+|g(x)|) < \infty$, $f((-\infty,\tau_1)) \subseteq (0,\infty)$, $f([\tau_1,\infty)) = \{0\}$, $f'(\R) \subseteq [-2,0]$, $f'((0,\tau)) \subseteq [-2, -1)$, $g((-\infty,\tau_1]) = \{0\}$, $g([ \tau_2, \infty)) = \{4\}$, $g'(\R) \subseteq [0, \infty)$, $ \alpha = \int_0^{\tau_1} |f(s)|^2 \, ds $,  let $(\Omega, \mathcal{F}, \P)$ be a probability space with a normal filtration $\mathbb{F}=(\mathbb{F}_t)_{t \in [0,T]}$,   let $W \colon  [0,T] \times \Omega \to \R$ be a standard $(\Omega, \mathcal{F}, \P, \mathbb{F})$-Brownian motion, 
and for every $\psi \in  C^{\infty}(\R,\R)$ let  $\Xpsione, \Xpsitwo \colon [0,T] \times \Omega \to \R$ be continuous $\mathbb{F}$-adapted  stochastic processes which satisfy for all $t \in [0,T]$ that $\P \big(\Xpsione_t= \int_0^t  f(\Xpsitwo_s) \, dW_s\big)=1$ and 
\begin{align}
\P\Big(\Xpsitwo_t = t + \textstyle \smallint_0^t \displaystyle  g(\Xpsitwo_s) [\cos(\psi(\Xpsione_s)) +1] \, ds\Big)=1.
\end{align}

\subsection{Comments to the setting}

The following result, Corollary~\ref{cor:fg} below, illustrates that there do indeed exist functions $f,g \colon \R \to \R$ which fulfill the hypotheses in Section~\ref{setting:gen}. Corollary~\ref{cor:fg} is an immediate consequence of Lemma~\ref{lemma:f} and Lemma~\ref{lemma:g} in Section~\ref{sec:existence}.

\begin{cor}
\label{cor:fg}
Let $T \in (0,\infty)$, $\tau \in (0,T)$. Then there exist $\tau_1 \in (\tau,T)$, $\tau_2 \in (\tau_1,T)$, $f,g \in C^{\infty}(\R,\R)$  which satisfy 
$\sup_{x \in \R} \!\big(|f(x)| +|g(x)|\big) < \infty$,
$f((-\infty,\tau_1)) \cup g'((\tau_1,\tau_2)) \subseteq (0,\infty)$,
$f([\tau_1,\infty)) = g((-\infty,\tau_1]) = \{0\}$,
$f'(\R) \subseteq [-2,0]$,
$f'((0,\tau)) \subseteq [-2, -1)$,
$g([ \tau_2, \infty)) = \{4\}$, and $  \int_0^{\tau_1} |f(s)|^2 \, ds \geq \frac{2\tau^3}{3}$.
\end{cor}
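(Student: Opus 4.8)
The plan is to obtain Corollary~\ref{cor:fg} by instantiating the explicit construction of Section~\ref{setting:fg} for the given $T$ and $\tau$ and then reading off each asserted property from Lemma~\ref{lemma:f} and Lemma~\ref{lemma:g}. So the first thing I would check is that the parameter constraints imposed in Section~\ref{setting:fg} can actually be met for arbitrary $T \in (0,\infty)$ and $\tau \in (0,T)$: since $T - \tau > 0$ and $\tau(1 - 2^{\nicefrac{-1}{3}}) > 0$, the interval $\bigl(0, \min\{T-\tau, \tau(1 - 2^{\nicefrac{-1}{3}})\}\bigr)$ is nonempty, so I fix some $\varepsilon$ in it; then $\tau_1 := \tau + \varepsilon$ satisfies $\tau < \tau_1 < \tau + (T - \tau) = T$, whence $\tau_1 \in (\tau, T)$, and I fix any $\tau_2 \in (\tau_1, T)$ (nonempty since $\tau_1 < T$). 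With $\mu$, $F$, $\rho$, $h$, $f$, and $g$ then defined by the formulas in Section~\ref{setting:fg}, the full setting of that section is in force.

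Next I would record that $f, g \in C^{\infty}(\R, \R)$, since this is not literally part of the two lemmas. For $f = \rho * F$ this is the standard smoothness of the mollification of a continuous (hence locally integrable) function against a compactly supported $C^{\infty}$ kernel; concretely, differentiating the identity $f'(x) = -2\int_{x-\tau}^{x+\tau} \rho(t)\,dt$ from \eqref{eq:f:der} gives $f^{(k)}(x) = -2\bigl(\rho^{(k-2)}(x+\tau) - \rho^{(k-2)}(x-\tau)\bigr)$ for $k \geq 2$, so that $f \in C^{\infty}(\R, \R)$ because $\rho \in C^{\infty}(\R, \R)$ is the usual mollifier. For $g$ it suffices to note that $h \in C^{\infty}(\R, \R)$ and that the denominator $x \mapsto h(x - \tau_1) + h(\tau_2 - x)$ is everywhere strictly positive: indeed $h(\tau_2 - x) > 0$ for $x \leq \tau_1$ (as then $\tau_2 - x \geq \tau_2 - \tau_1 > 0$), $h(x - \tau_1) > 0$ for $x \geq \tau_2$, and both summands are positive for $x \in (\tau_1, \tau_2)$; hence $g$, a quotient of $C^{\infty}$ functions with nonvanishing denominator, lies in $C^{\infty}(\R,\R)$.

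Finally I would match the remaining claims to the lemma items one by one: $\sup_{x \in \R}(|f(x)| + |g(x)|) < \infty$ from Lemma~\ref{lemma:f}\,\eqref{item:f:b} and Lemma~\ref{lemma:g}\,\eqref{item:g:b}; $f((-\infty, \tau_1)) \subseteq (0,\infty)$ from Lemma~\ref{lemma:f}\,\eqref{item:f:lb}; $g'((\tau_1, \tau_2)) \subseteq (0,\infty)$ from Lemma~\ref{lemma:g}\,\eqref{item:g:st:mon}; $f([\tau_1, \infty)) = \{0\}$ from Lemma~\ref{lemma:f}\,\eqref{item:f:ub}; $g((-\infty, \tau_1]) = \{0\}$ from Lemma~\ref{lemma:g}\,\eqref{item:g:0}; $f'(\R) \subseteq [-2,0]$ from Lemma~\ref{lemma:f}\,\eqref{item:f:0}; $f'((0,\tau)) \subseteq [-2,-1)$ from Lemma~\ref{lemma:f}\,\eqref{item:f:-1}; $g([\tau_2, \infty)) = \{4\}$ from Lemma~\ref{lemma:g}\,\eqref{item:g:4}; and $\int_0^{\tau_1} |f(s)|^2\,ds \geq \frac{2\tau^3}{3}$ from Lemma~\ref{lemma:f}\,\eqref{item:f:int}. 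I do not expect any genuine obstacle: all the substantive work — in particular the quantitative lower bound $\int_0^{\tau_1}|f(s)|^2\,ds \geq \frac{2\tau^3}{3}$ and the strict slope estimate $f'|_{(0,\tau)} \subseteq [-2,-1)$ — is already carried out inside Lemma~\ref{lemma:f}, so the only things left to supply are the easy satisfiability of the parameter ranges and the routine $C^{\infty}$-regularity of $f$ and $g$.
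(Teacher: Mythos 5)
Your proposal is correct and follows exactly the route the paper takes: the paper dispatches Corollary~\ref{cor:fg} as an "immediate consequence of Lemma~\ref{lemma:f} and Lemma~\ref{lemma:g}," which is precisely the instantiation-and-read-off argument you carry out. Your added observations — that the parameter ranges for $\varepsilon$, $\tau_1$, $\tau_2$ are nonempty, and that $f,g\in C^{\infty}(\R,\R)$ (not literally among the listed lemma items since Section~\ref{setting:fg} only posits continuity) — are sound and, if anything, make the argument tighter than the paper's one-line justification.
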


\subsection{Comparison results for a family of one-dimensional deterministic ordinary differential equations}

In this section we establish three elementary comparison results for a specific type of ordinary differential equations (cf., e.g., Exercise~1.7 in Tao \cite{taononlinear} for similar results) which we employ in the proof of Theorem~\ref{thm:intro} above.

\begin{lemma}
\label{lemma:monotone:gen}
Assume the setting in Section~\ref{setting:gen} and let $z = (z_t(a))_{t \in [\tau_1,T], a \in \R} = (z(t,a))_{t \in [\tau_1,T], a \in \R} \colon [\tau_1,T] \times \R \to \R$ be a continuous function which satisfies for all $t \in [\tau_1,T]$, $a \in \R$ that
\begin{align}
\label{eq:monotone}
z_t(a) = \tau_1 + \int_{\tau_1}^t \big[ 1+ g(z_s(a))(a+1)\big] \, ds.
\end{align}
Then it holds for all $a\in \R$, $b \in (-\infty,a]$, $t \in [\tau_1,T]$   that
\begin{align}
z_t(a) \geq z_t(b).
\end{align}
\end{lemma}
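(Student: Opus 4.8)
The plan is to prove this by a standard ODE comparison (monotone dependence on parameters) argument. Fix $a \in \R$ and $b \in (-\infty,a]$. Both $z_\cdot(a)$ and $z_\cdot(b)$ solve the integral equation \eqref{eq:monotone} with the same initial value $z_{\tau_1}(a) = z_{\tau_1}(b) = \tau_1$ but with right-hand sides $1 + g(\cdot)(a+1)$ and $1 + g(\cdot)(b+1)$ respectively. Since $g'(\R) \subseteq [0,\infty)$ by the setting in Section~\ref{setting:gen}, the function $g$ is nondecreasing, and since $g \geq 0$ (this follows from $g((-\infty,\tau_1]) = \{0\}$, $g'(\R) \subseteq [0,\infty)$, hence $g$ is nonnegative everywhere), we have that for every $y \in \R$ the vector field $y \mapsto 1 + g(y)(a+1)$ dominates $y \mapsto 1 + g(y)(b+1)$ pointwise, because $a+1 \geq b+1$ and $g(y) \geq 0$. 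Moreover $g$ is locally Lipschitz (being $C^\infty$), so solutions are unique. This is precisely the setup of a classical comparison theorem.

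The key steps I would carry out are as follows. First I would record the elementary facts just mentioned: $g$ is nonnegative on $\R$ (from the sign condition on $g'$ and $g|_{(-\infty,\tau_1]} \equiv 0$), $g$ is nondecreasing, and $g$ is locally Lipschitz. Second, I would set $\Delta_t := z_t(a) - z_t(b)$ and note $\Delta_{\tau_1} = 0$ and, subtracting the two integral equations,
\begin{align}
\Delta_t = \int_{\tau_1}^t \Big[ g(z_s(a))(a+1) - g(z_s(b))(b+1) \Big] \, ds.
\end{align}
Third, I would split the integrand as $g(z_s(a))(a+1) - g(z_s(b))(a+1) + g(z_s(b))(a+1) - g(z_s(b))(b+1) = (a+1)\big(g(z_s(a)) - g(z_s(b))\big) + g(z_s(b))(a - b)$. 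The second term is $\geq 0$ everywhere since $g \geq 0$ and $a - b \geq 0$. For the first term, on any subinterval where $\Delta_s \geq 0$ we have $g(z_s(a)) \geq g(z_s(b))$ by monotonicity of $g$; combined with $a + 1 > 0$ (note $a \in \R$ but $a+1$ could be negative — this needs care; see below) this term would also be $\geq 0$. The cleanest way to make this rigorous is a contradiction argument: suppose there exists $t^\ast \in (\tau_1,T]$ with $\Delta_{t^\ast} < 0$; let $t_0 := \sup\{ s \in [\tau_1, t^\ast] : \Delta_s \geq 0 \}$, so $\Delta_{t_0} = 0$ by continuity and $\Delta_s < 0$ on $(t_0, t^\ast]$. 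On $(t_0,t^\ast]$ we then have $g(z_s(a)) \leq g(z_s(b))$, which makes the sign of $(a+1)(g(z_s(a)) - g(z_s(b)))$ depend on the sign of $a+1$, so a naive estimate fails; instead I would invoke the local Lipschitz continuity of $g$ directly on the compact set traced out by the two solutions to get a Grönwall-type bound $|g(z_s(a)) - g(z_s(b))| \leq L |\Delta_s|$, yielding $|\Delta_t| \leq |a+1| \int_{t_0}^t L |\Delta_s| \, ds + 0$ on $[t_0, t^\ast]$ (the nonnegative term $g(z_s(b))(a-b)$ only helps), and Grönwall forces $\Delta \equiv 0$ on $[t_0, t^\ast]$, contradicting $\Delta_{t^\ast} < 0$.

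I expect the main obstacle to be handling the case where $a + 1 < 0$: then the map $y \mapsto 1 + g(y)(a+1)$ is \emph{nonincreasing} rather than nondecreasing, so the elementary "the larger vector field gives the larger solution" heuristic must be replaced by the uniqueness-plus-continuity (or Grönwall) argument sketched above, which works regardless of the sign of $a+1$ because the only genuinely sign-definite contribution is the $g(z_s(b))(a-b) \geq 0$ term, and the $g$-difference term is controlled in absolute value by local Lipschitzness. An alternative, perhaps even cleaner, route is to quote a general comparison lemma directly (the paper itself points to Exercise~1.7 in Tao \cite{taononlinear}): writing $\phi(s,y) := 1 + g(y)(b+1)$ and $\psi(s,y) := 1 + g(y)(a+1)$, one checks $\psi(s,y) \geq \phi(s,y)$ for all $s,y$ (using $g(y) \geq 0$, $a \geq b$), $\phi$ locally Lipschitz in $y$, and equal initial data, from which $z_t(a) \geq z_t(b)$ follows. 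I would likely present the self-contained Grönwall version for completeness, as it is short and avoids relying on the precise statement of the cited exercise.
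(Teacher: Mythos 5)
Your proof is correct, but it takes a genuinely different route from the paper's. The paper differentiates \eqref{eq:monotone} in the parameter $a$: setting $y(t,a)=\big(\tfrac{\partial}{\partial a}z\big)(t,a)$, it obtains the linear ODE $\partial_t y = g(z)+g'(z)(a+1)y$ with $y(\tau_1,a)=0$, solves it by variation of constants, and reads off $y(t,a)=\int_{\tau_1}^{t}e^{\int_s^t g'(z(u,a))(a+1)\,du}\,g(z(s,a))\,ds\geq 0$ because $g\geq 0$; monotonicity then follows from the fundamental theorem of calculus. Your argument instead works on the difference $\Delta_t=z_t(a)-z_t(b)$ directly, isolates the sign-definite contribution $g(z_s(b))(a-b)\geq 0$, and controls the remaining term $(a+1)\big(g(z_s(a))-g(z_s(b))\big)$ via local Lipschitzness of $g$ and a Gr\"onwall estimate on $[t_0,t^\ast]$, where $t_0$ is the last time $\Delta$ is nonnegative. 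Both approaches hinge on the same two facts --- $g\geq 0$ (so the right-hand side is nondecreasing in the parameter) and $g$ smooth enough to control differences --- but they buy different things. The paper's derivative-in-$a$ computation is shorter and more explicit, at the cost of tacitly invoking differentiability of $z$ in $a$ (standard from smooth dependence on parameters, but not spelled out). Your Gr\"onwall/first-crossing argument needs only the integral equation and continuity of $z$, so it is self-contained and does not presuppose any regularity of $z$ in the parameter; your correct observation that the naive ``bigger vector field $\Rightarrow$ bigger solution'' heuristic breaks when $a+1<0$ is exactly the point where a purely elementary comparison would fail, and where either the Gr\"onwall bound or the paper's explicit formula is needed.
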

\begin{proof}[Proof of Lemma~\ref{lemma:monotone:gen}]
Throughout this proof let $y \colon [\tau_1,T] \times \R \to \R$ be the function which satisfies for all $t \in [\tau_1,T]$, $a \in \R$ that
\begin{align}
y(t,a) = \big(\tfrac{\partial  }{\partial a} z  \big)(t,a).  
\end{align}
Next note that \eqref{eq:monotone} ensures that for all $t \in [\tau_1,T]$, $a \in \R$ it holds that
\begin{align}
\big(\tfrac{\partial  }{\partial t} z  \big)(t,a) = 1+ g(z(t,a))(a+1).
\end{align}
This implies that for all $t \in [\tau_1,T]$, $a \in \R$ it holds that
\begin{align}
\begin{split}
\big(\tfrac{\partial  }{\partial t} y  \big)(t,a) &=  \big(\tfrac{\partial^2  }{\partial t \partial a} z  \big)(t,a) = \big(\tfrac{\partial^2  }{\partial a \partial t} z  \big)(t,a)\\
&=   g(z(t,a)) +  g'(z(t,a))(a+1) \big(\tfrac{\partial  }{\partial a} z  \big)(t,a) \\
&= g(z(t,a)) +  g'(z(t,a))(a+1) y(t,a).
\end{split}
\end{align} 
Therefore, we obtain that for all $t \in [\tau_1,T]$, $a \in \R$ it holds that
\begin{align}
\begin{split}
y(t,a) & = e^{\int_{\tau_1}^{t} g'(z(u,a))(a+1) \, du} \, y(\tau_1,a) + \int_{\tau_1}^{t} e^{\int_s^t g'(z(u,a))(a+1) \, du} \, g(z(s,a)) \, ds\\
& = \int_{\tau_1}^{t} e^{\int_s^t g'(z(u,a))(a+1) \, du} \, g(z(s,a)) \, ds \geq 0. 
\end{split}
\end{align}
Combining this with the fundamental theorem of calculus completes the proof of Lemma~\ref{lemma:monotone:gen}.
\end{proof}

\begin{lemma}
\label{lemma:z:diff:gen}
Assume the setting in Section~\ref{setting:gen} and let $z \colon [\tau_1,T] \times \R  \to \R$  be a continuous function which satisfies for all $t \in [\tau_1,T]$, $a \in \R$ that
\begin{align}
z_t(a) = \tau_1 + \int_{\tau_1}^t \big[ 1+ g(z_s(a))(a+1)\big] \, ds.
\end{align}
Then it holds for all $a\in [-1, \infty)$, $b \in [-1,a]$, $t \in [\tau_2,T]$   that
\begin{align}
z_t(a)-z_t(b) \geq 4(a-b )(t-\tau_2).
\end{align}
\end{lemma}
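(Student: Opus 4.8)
The plan is to argue directly from the integral equation for the difference $z_t(a)-z_t(b)$, using three structural facts about $g$ that are part of the setting in Section~\ref{setting:gen}: that $g$ is non-negative (which follows from $g((-\infty,\tau_1])=\{0\}$ together with $g'(\R)\subseteq[0,\infty)$, so that $g$ is non-decreasing with $g(\tau_1)=0$), that $g$ is non-decreasing, and that $g\equiv 4$ on $[\tau_2,\infty)$. The monotonicity of $z$ in its second argument, already provided by Lemma~\ref{lemma:monotone:gen}, will also be used.

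First I would establish the auxiliary pointwise bound $z_s(b)\geq s$ for all $s\in[\tau_1,T]$ and all $b\in[-1,\infty)$. Indeed, since $b+1\geq 0$ and $g\geq 0$, the integrand $1+g(z_r(b))(b+1)$ in the defining equation for $z$ is bounded below by $1$, so
\[
z_s(b)=\tau_1+\int_{\tau_1}^s\big[1+g(z_r(b))(b+1)\big]\,dr\geq \tau_1+(s-\tau_1)=s.
\]
Consequently, for every $s\in[\tau_2,T]$ we get $z_s(b)\geq \tau_2$ and hence $g(z_s(b))=4$. This is the linchpin of the argument and the only place where the hypothesis $b\geq -1$ together with $g\geq 0$ is genuinely needed.

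Next, fix $a\in[-1,\infty)$, $b\in[-1,a]$ and $t\in[\tau_2,T]$. Subtracting the integral equations for $z_t(a)$ and $z_t(b)$ and inserting the algebraic identity
\[
g(z_s(a))(a+1)-g(z_s(b))(b+1)=(a+1)\big(g(z_s(a))-g(z_s(b))\big)+(a-b)\,g(z_s(b)),
\]
I would note that the first summand on the right-hand side is non-negative, since $a+1\geq 0$, $g$ is non-decreasing, and $z_s(a)\geq z_s(b)$ by Lemma~\ref{lemma:monotone:gen}. Therefore
\[
z_t(a)-z_t(b)\geq (a-b)\int_{\tau_1}^{t}g(z_s(b))\,ds\geq (a-b)\int_{\tau_2}^{t}g(z_s(b))\,ds,
\]
where the second inequality drops the integral over $[\tau_1,\tau_2]$ using $a-b\geq 0$ and $g\geq 0$. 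On the remaining interval $[\tau_2,t]$ the first step gives $g(z_s(b))=4$, so the right-hand side equals $4(a-b)(t-\tau_2)$, which is the claimed bound.

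I do not anticipate a serious obstacle here: everything reduces to the one-line estimate $z_s(b)\geq s$ and a single splitting of the integrand, combined with the already established monotonicity Lemma~\ref{lemma:monotone:gen}. The only point that requires a moment of care is making sure the hypotheses $a,b\geq -1$ are used exactly where needed (to keep $a+1\geq 0$ and $b+1\geq 0$, hence the integrand $\geq 1$ and the cross term $\geq 0$).
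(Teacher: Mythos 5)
Your proposal is correct and follows essentially the same route as the paper: establish that the state is at least $\tau_2$ on $[\tau_2,T]$, invoke Lemma~\ref{lemma:monotone:gen} for $z_t(a)\geq z_t(b)$, use the monotonicity of $g$ to drop the contribution over $[\tau_1,\tau_2]$, and evaluate $g$ to $4$ on $[\tau_2,t]$. The only cosmetic difference is your algebraic decomposition of the integrand into $(a+1)(g(z_s(a))-g(z_s(b)))+(a-b)g(z_s(b))$; the paper instead shows directly that $g(z_s(a))(a+1)\geq g(z_s(b))(a+1)\geq g(z_s(b))(b+1)$ and then evaluates both $g$-terms to $4$ on $[\tau_2,t]$, but both give $4(a-b)(t-\tau_2)$ since the extra term you discard vanishes on $[\tau_2,t]$ anyway.
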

\begin{proof}[Proof of Lemma~\ref{lemma:z:diff:gen}]
First, note that  Lemma~\ref{lemma:monotone:gen} ensures that for all $a\in [-1, \infty)$, $b \in [-1,a]$, $t \in [\tau_1,T]$ it holds that  
\begin{align}
z_t(a) \geq z_t(b).
\end{align}
The fact that  $g$ is a  non-decreasing function hence ensures that for all $a\in [-1, \infty)$, $b \in [-1,a]$, $t \in [\tau_1,T]$ it holds that  
\begin{align}
\label{eq:z:mono:gen}
g(z_t(a))(a+1) \geq g(z_t(b))(a+1) \geq g(z_t(b))(b+1).
\end{align}
Moreover, observe that for all $t \in [\tau_2,T]$, $r \in [-1, \infty)$ it holds that
\begin{align}
\begin{split}
z_t (r) &= \tau_1 + \int_{\tau_1}^t \big[ 1+ g(z_s(r))(r+1)\big] \, ds \\
&\geq \tau_1 + \int_{\tau_1}^{\tau_2} \big[ 1+ g(z_s(r))(r+1)\big] \, ds \geq \tau_2.
\end{split}
\end{align}
This, \eqref{eq:z:mono:gen}, and the assumption that $g([\tau_2, \infty)) = \{4\}$ imply that for all $a\in [-1, \infty)$, $b \in [-1,a]$, $t \in [\tau_2,T]$ it holds that
\begin{align}
\begin{split}
z_t(a) - z_t(b) &= \int_{\tau_1}^t \big[ g(z_s(a))(a+1) - g(z_s(b))(b+1) \big] \, ds  \\
&\geq \int_{\tau_2}^t \big[ g(z_s(a))(a+1) - g(z_s(b))(b+1) \big] \, ds \\
&= \int_{\tau_2}^t  4(a+1) - 4(b+1) \, ds  = \int_{\tau_2}^t 4(a-b) \, ds \\
&= 4(a-b )(t-\tau_2).
\end{split}
\end{align}
The proof of Lemma~\ref{lemma:z:diff:gen} is thus completed.
\end{proof}

The next result, Corollary~\ref{cor:z:diff}, is an immediate consequence of Lemma~\ref{lemma:z:diff:gen} above.

\begin{cor}
\label{cor:z:diff}
Assume the setting in Section~\ref{setting:gen} and let $z \colon [\tau_1,T] \times \R  \to \R$  be a continuous function which satisfies for all $t \in [\tau_1,T]$, $a \in \R$ that
\begin{align}
z_t(a) = \tau_1 + \int_{\tau_1}^t \big[ 1+ g(z_s(a))(a+1)\big] \, ds.
\end{align}
Then it holds for all $a,b\in [-1, \infty)$  that
\begin{align}
|z_T(a)-z_T(b)| \geq 4(T-\tau_2)|a-b|.
\end{align}
\end{cor}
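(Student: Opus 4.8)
The plan is to derive Corollary~\ref{cor:z:diff} directly from Lemma~\ref{lemma:z:diff:gen} by specializing to $t = T$ and symmetrizing in the two arguments. The only subtlety is that Lemma~\ref{lemma:z:diff:gen} is stated under the ordering hypothesis $b \in [-1,a]$, whereas the corollary makes no such assumption, so the first step is to remove this asymmetry by a case distinction.

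First I would fix $a, b \in [-1, \infty)$ and consider the case $a \geq b$. Then $b \in [-1, a]$, so Lemma~\ref{lemma:z:diff:gen} applies with $t = T \in [\tau_2, T]$ and yields $z_T(a) - z_T(b) \geq 4(a - b)(T - \tau_2) \geq 0$. Since the left-hand side is nonnegative, it equals $|z_T(a) - z_T(b)|$, and since $a - b = |a - b|$, this gives exactly $|z_T(a) - z_T(b)| \geq 4(T - \tau_2)|a - b|$. In the remaining case $a < b$, I would apply the same argument with the roles of $a$ and $b$ interchanged (now $a \in [-1, b]$), obtaining $z_T(b) - z_T(a) \geq 4(b - a)(T - \tau_2)$, which again rearranges to $|z_T(a) - z_T(b)| \geq 4(T - \tau_2)|a - b|$. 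Combining the two cases completes the proof.

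There is essentially no obstacle here: the content is entirely contained in Lemma~\ref{lemma:z:diff:gen}, and the corollary is just its symmetric restatement at the terminal time $T$. The only thing to be careful about is that $T$ indeed lies in the interval $[\tau_2, T]$ on which the lemma is valid, which holds trivially since $\tau_2 < T$ by the standing assumption $\tau_2 \in (\tau_1, T)$ in Section~\ref{setting:gen}.
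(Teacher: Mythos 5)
Your proof is correct and matches the paper's approach exactly: the paper simply declares Corollary~\ref{cor:z:diff} an immediate consequence of Lemma~\ref{lemma:z:diff:gen}, and your argument (specializing to $t=T$, noting $T \in [\tau_2,T]$, and handling the two orderings of $a$ and $b$ by symmetry) is precisely the verification that justifies calling it immediate.
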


\subsection{On the explicit solution of a one-dimensional deterministic ordinary differential equation}

The second component of the two-dimensional SDE in Section~\ref{setting:gen} is partially employed to describe the time variable. It is the subject of the next two lemmas, Lemmas~\ref{lemma:x2:iden:gen} and~\ref{lemma:X:iden:gen}, to make this statement precise. Lemma~\ref{lemma:x2:iden:gen} is used in the proof of  Lemma~\ref{lemma:X:iden:gen}. Lemma~\ref{lemma:X:iden:gen}, in turn, is employed in the proof of Lemma~\ref{lemma:xtwo:z:gen} in Section~\ref{sec:solution} below.         

\begin{lemma}
\label{lemma:x2:iden:gen}
Let $T \in (0,\infty)$, $\tau_1 \in [0,T]$, $f,x \in C([0,T],[0,\infty))$, $g \in C(\R, [0,\infty))$   satisfy for all $ t \in [0,T]$ that $g((-\infty,\tau_1]) = \{0\}$ and 
\begin{align}
x_t = t + \int_0^t g(x_s) f(s) \, ds = \int_0^t \big[1+ g(x_s) f(s)\big] \, ds.
\end{align}
Then it holds for all $t \in [0,\tau_1]$  that $x_t = t$.
\end{lemma}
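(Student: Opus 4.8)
The plan is to exploit two elementary structural features of the equation. First, since $f$ and $g$ are non-negative, the integrand $g(x_s)f(s)$ is non-negative, so $x_t\geq t$ for every $t\in[0,T]$ and the ``excess'' function $\varphi(t):=x_t-t=\int_0^t g(x_s)f(s)\,ds$ is non-negative and non-decreasing in $t$. Second, $g$ vanishes on $(-\infty,\tau_1]$, so the feedback term is switched off as long as $x_t\leq\tau_1$. Given these observations, I expect $\varphi\equiv0$ on $[0,\tau_1]$, which is precisely the assertion $x_t=t$ for $t\in[0,\tau_1]$.

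Concretely, I would reduce the claim to showing $\varphi(t)=0$ for all $t\in[0,\tau_1]$, and then run a ``first escape time'' argument: set $c:=\inf\{t\in[0,\tau_1]:\varphi(t)>0\}$ with the convention $\inf\emptyset=\tau_1$. If this set is empty there is nothing to prove, so assume it is nonempty. From the monotonicity of $\varphi$ one gets $\varphi\equiv0$ on $[0,c)$, hence $x_t=t$ there, and then by continuity of $x$ that $x_c=c$ and $\varphi(c)=0$.

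The contradiction is obtained in two cases. If $c<\tau_1$, then $x_c=c<\tau_1$, so by continuity of $x$ there is $\eta>0$ with $x_t<\tau_1$, and hence $g(x_t)=0$, for all $t\in[c,c+\eta]\subseteq[0,\tau_1]$; this yields $\varphi(t)=\varphi(c)+\int_c^t g(x_s)f(s)\,ds=0$ on $[c,c+\eta]$, contradicting the definition of $c$. If $c=\tau_1$, then $x_s=s$ for all $s\in[0,\tau_1)$, so $g(x_s)=g(s)=0$ for $s\in[0,\tau_1)$, whence $\varphi(\tau_1)=\int_0^{\tau_1}g(x_s)f(s)\,ds=0$; together with monotonicity and non-negativity of $\varphi$ this forces $\varphi\equiv0$ on $[0,\tau_1]$, again contradicting the nonemptiness of $\{t\in[0,\tau_1]:\varphi(t)>0\}$. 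Therefore that set is empty and $x_t=t$ on $[0,\tau_1]$.

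The argument is short, and the only point that needs a little care is the treatment of the right endpoint $t=\tau_1$: there the ``open neighbourhood'' trick of the first case is unavailable, and one instead reads off $g(x_s)=0$ from $x_s=s$ on the half-open interval $[0,\tau_1)$, which suffices since a single point carries no mass under the integral. Everything else is just continuity of $x$ together with the sign of the integrand. An essentially equivalent alternative would be a connectedness argument: show that $\{t\in[0,\tau_1]:x_t=t\}$ is nonempty, closed, and relatively open in $[0,\tau_1]$, and conclude it equals $[0,\tau_1]$.
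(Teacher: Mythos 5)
Your proof is correct and follows essentially the same hitting-time idea as the paper: the paper tracks $\mu=\inf\{t:x_t\geq\tau_1\}$, shows $x_t=t$ on $[0,\mu]$ and then $\mu=\tau_1$, whereas you track the first time the excess $\varphi(t)=x_t-t$ becomes positive and rule it out by a small case analysis. The two are dual formulations of the same argument, and your endpoint discussion at $t=\tau_1$ is handled correctly.
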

\begin{proof}[Proof of Lemma~\ref{lemma:x2:iden:gen}]
Throughout this proof let $\mu \in [0,T]$ be the real number given by
\begin{align}
\label{eq:tau:gen}
\mu = \inf \!\left( \{t \in [0,T] \colon x_t \geq \tau_1\} \cup \{T\}\right).
\end{align}
Observe that the fact that
\begin{align}
\forall \, t \in [0,T] \colon x_t = t + \int_0^t g(x_s) f(s) \, ds \geq t
\end{align}
ensures that 
\begin{align}
\label{eq:nonempty}
\{t \in [0,T] \colon x_t \geq \tau_1 \} \supseteq [\tau_1,T] \neq \emptyset.
\end{align}
Next note that the fact that $x_0=0$ assures that  for all $t \in [0,\mu]$ it holds that $x_t \leq \tau_1$. This and the assumption that $g((-\infty,\tau_1]) = \{0\}$ ensure that for all $t \in [0,\mu]$ it holds that
\begin{align}
\label{eq:tau:2:gen}
\tau_1 \geq x_t = t + \int_0^t g(x_s) f(s) \, ds = t.
\end{align}
In the next step we observe that \eqref{eq:tau:gen}  and \eqref{eq:nonempty} imply that $x_{\mu} \geq \tau_1$. Combining this with \eqref{eq:tau:2:gen} yields that
\begin{align}
\tau_1 \geq \mu = x_{\mu} \geq \tau_1.
\end{align}
This proves that $\mu=\tau_1$. Combining this and \eqref{eq:tau:2:gen} completes the proof of Lemma~\ref{lemma:x2:iden:gen}.
\end{proof}

\subsection{On the explicit solution of a two-dimensional SDE}
\label{sec:solution}

In this section we derive in Item~\eqref{item:tau1:X} of Lemma~\ref{lemma:X:iden:gen} and in Lemma~\ref{lemma:xtwo:z:gen} below an explicit representation of the solution of the SDE from Section~\ref{setting:gen}. This explicit representation is then employed in our error analysis in Section~\ref{sec:explicit} below.

\begin{lemma}
\label{lemma:X:iden:gen}
Assume the setting in Section~\ref{setting:gen} and let $\psi \in  C^{\infty}(\R,\R)$. Then
\begin{enumerate}[(i)]
\item \label{item:0:X} it holds for all $t \in [0,\tau_1]$  that $\P(\Xpsitwo_t =t)=1$,
\item \label{item:tau1:f} it holds for all $t \in [\tau_1,T]$  that $\P(f(\Xpsitwo_t)=0)=1$, and 
\item \label{item:tau1:X} it holds for all $t \in [\tau_1,T]$ that $\P(\Xpsione_t=\Xpsione_{\tau_1}= \int_0^{\tau_1}  f(s) \, dW_s)=1$.
\end{enumerate}
\end{lemma}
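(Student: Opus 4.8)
The plan is to establish the three items in sequence, using the earlier lemmas on the deterministic ODE. For Item~\eqref{item:0:X}, the idea is to apply Lemma~\ref{lemma:x2:iden:gen} pathwise. Fix a path $\omega$ (outside a null set) for which the identity $\Xpsitwo_t(\omega) = t + \int_0^t g(\Xpsitwo_s(\omega))[\cos(\psi(\Xpsione_s(\omega)))+1]\,ds$ holds for all $t \in [0,T]$; note that such a path exists $\P$-a.s.\ because both sides are continuous in $t$ and agree a.s.\ for each fixed $t$. Then set $f_\omega(s) := \cos(\psi(\Xpsione_s(\omega))) + 1 \in [0,2]$, which is continuous and nonnegative, and note $x := \Xpsitwo_\cdot(\omega)$ is a continuous $[0,\infty)$-valued function (nonnegative since the integrand is nonnegative and $x_0 = 0$). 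The hypothesis $g((-\infty,\tau_1]) = \{0\}$ from Section~\ref{setting:gen} supplies the remaining assumption of Lemma~\ref{lemma:x2:iden:gen}, which then yields $\Xpsitwo_t(\omega) = t$ for all $t \in [0,\tau_1]$. Since this holds for a.e.\ $\omega$, we get $\P(\Xpsitwo_t = t) = 1$ for each $t \in [0,\tau_1]$.

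For Item~\eqref{item:tau1:f}, the first step is to show $\P(\Xpsitwo_t \geq \tau_1) = 1$ for all $t \in [\tau_1, T]$: indeed, from the defining equation $\Xpsitwo_t = t + \int_0^t g(\Xpsitwo_s)[\cos(\psi(\Xpsione_s))+1]\,ds \geq t$ a.s.\ (the integrand being nonnegative), so $\Xpsitwo_t \geq t \geq \tau_1$ a.s.\ when $t \geq \tau_1$. Combining with the hypothesis $f([\tau_1,\infty)) = \{0\}$ immediately gives $f(\Xpsitwo_t) = 0$ a.s.

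For Item~\eqref{item:tau1:X}, use the defining equation $\Xpsione_t = \int_0^t f(\Xpsitwo_s)\,dW_s$ a.s.\ together with Item~\eqref{item:tau1:f}. The point is that $f(\Xpsitwo_s) = 0$ a.s.\ for each $s \in [\tau_1, T]$, so by Fubini/Tonelli the process $s \mapsto f(\Xpsitwo_s)$ vanishes $ds \otimes d\P$-a.e.\ on $[\tau_1, T] \times \Omega$, hence $\int_{\tau_1}^t f(\Xpsitwo_s)\,dW_s = 0$ a.s.\ for each fixed $t \in [\tau_1,T]$, giving $\Xpsione_t = \int_0^{\tau_1} f(\Xpsitwo_s)\,dW_s = \Xpsione_{\tau_1}$ a.s. Finally, for $s \in [0,\tau_1]$ we have $\Xpsitwo_s = s$ a.s.\ by Item~\eqref{item:0:X}, but to replace $f(\Xpsitwo_s)$ by $f(s)$ inside the stochastic integral one should argue again via a null set / $L^2$-isometry argument: the integrands $s \mapsto f(\Xpsitwo_s)$ and $s \mapsto f(s)$ agree $ds \otimes d\P$-a.e.\ on $[0,\tau_1] \times \Omega$ (since $\Xpsitwo_s = s$ a.s.\ for each $s$), so their It\^o integrals over $[0,\tau_1]$ coincide a.s., yielding $\Xpsione_{\tau_1} = \int_0^{\tau_1} f(s)\,dW_s$ a.s.

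The main subtlety — and the step I would be most careful about — is the manipulation of stochastic integrals: one only knows the defining SDE identities hold $\P$-a.s.\ for each fixed $t$, not simultaneously for all $t$, and one cannot naively substitute a pathwise identity like $\Xpsitwo_s = s$ into an It\^o integral. The clean way around this is to work at the level of integrands modulo $ds \otimes d\P$-null sets and invoke the It\^o isometry (or more precisely, that the It\^o integral depends only on the equivalence class of the integrand in the appropriate $L^2$ space), which makes all the substitutions rigorous. Everything else is a routine application of the already-proved Lemma~\ref{lemma:x2:iden:gen} and the monotonicity bound $\Xpsitwo_t \geq t$.
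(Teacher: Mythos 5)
Your proposal is correct and follows essentially the same route as the paper's proof: Item~(i) via Lemma~\ref{lemma:x2:iden:gen} applied pathwise, Item~(ii) via $g \geq 0$ giving $\Xpsitwo_t \geq t \geq \tau_1$ and then $f([\tau_1,\infty)) = \{0\}$, and Item~(iii) by splitting the stochastic integral at $\tau_1$ and using Items~(i) and~(ii). You spell out the measure-theoretic bookkeeping (countable-plus-continuity argument for a simultaneous null set, and the $ds \otimes d\P$-a.e.\ equivalence needed to replace the integrand $f(\Xpsitwo_s)$ by $f(s)$ in the It\^o integral) that the paper leaves implicit, but the underlying argument is the same.
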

\begin{proof}[Proof of Lemma~\ref{lemma:X:iden:gen}]
First, note that Lemma~\ref{lemma:x2:iden:gen} proves that for all $t\in [0,\tau_1]$ it holds that $\P(\Xpsitwo_t =t)=1$. This establishes Item~\eqref{item:0:X}. Next note that  the fact that $g \geq 0$ ensures that for all $t \in [\tau_1,T]$ it holds that 
\begin{align}
\P(\Xpsitwo_t \geq \tau_1)=1.
\end{align}
The assumption that $f([\tau_1, \infty))= \{0\}$ hence proves Item~\eqref{item:tau1:f}. Moreover, observe that Item~\eqref{item:0:X} and Item~\eqref{item:tau1:f} imply that for all $t \in [\tau_1,T]$ it holds  $\P$-a.s.~that
\begin{align}
\Xpsione_t= \int_0^{\tau_1}  f(\Xpsitwo_s) \, dW_s + \int_{\tau_1}^t  f(\Xpsitwo_s) \, dW_s = \int_0^{\tau_1}  f(s) \, dW_s. 
\end{align}
This establishes Item~\eqref{item:tau1:X}. The proof of Lemma~\ref{lemma:X:iden:gen} is thus completed.
\end{proof}

\begin{lemma}
\label{lemma:xtwo:z:gen}
Assume the setting in Section~\ref{setting:gen}, let $\psi \in  C^{\infty}(\R,\R)$, and let $z \colon [\tau_1,T] \times \R  \to \R$ be a continuous function which satisfies for all $t \in [\tau_1,T]$, $a \in \R$  that
\begin{align}
z_t(a) = \tau_1 + \int_{\tau_1}^t \big[ 1+ g(z_s(a))(a+1)\big] \, ds.
\end{align}
Then it holds for all $t \in [\tau_1,T]$ that
\begin{align}
\P \Big(\Xpsitwo_t =z_t\big(\!\cos(\psi(\Xpsione_{\tau_1}))\big)\Big)=1.
\end{align}
\end{lemma}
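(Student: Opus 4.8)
The plan is to show that $\Xpsitwo$ solves, on the interval $[\tau_1,T]$, exactly the same (random) ordinary differential equation that defines $z_t(a)$ with the specific random initial parameter $a = \cos(\psi(\Xpsione_{\tau_1}))$, and then invoke uniqueness of solutions to that ODE (together with the continuity of $z$ and of the processes involved) to conclude the identity $\Xpsitwo_t = z_t(\cos(\psi(\Xpsione_{\tau_1})))$ $\P$-a.s.\ for every fixed $t$. The key simplification is that Lemma~\ref{lemma:X:iden:gen}\eqref{item:tau1:X} tells us $\Xpsione_s = \Xpsione_{\tau_1}$ for all $s \in [\tau_1,T]$ $\P$-a.s., so the "feedback" term $\cos(\psi(\Xpsione_s))$ appearing in the dynamics of $\Xpsitwo$ is, on $[\tau_1,T]$, $\P$-a.s.\ the constant $\cos(\psi(\Xpsione_{\tau_1}))$.

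First I would fix $t \in [\tau_1,T]$ and work on the full-measure event on which simultaneously $\Xpsitwo_{\tau_1} = \tau_1$ (Item~\eqref{item:0:X} of Lemma~\ref{lemma:X:iden:gen}), $\Xpsione_s = \Xpsione_{\tau_1}$ for all $s \in [\tau_1,T]$ (Item~\eqref{item:tau1:X}), and the defining integral equation for $\Xpsitwo$ holds for all rational times, hence by continuity for all times in $[\tau_1,T]$. On this event, abbreviating $A := \cos(\psi(\Xpsione_{\tau_1}))$, we have for all $s \in [\tau_1,t]$ that $g(\Xpsitwo_s)[\cos(\psi(\Xpsione_s))+1] = g(\Xpsitwo_s)(A+1)$, and subtracting the $[0,\tau_1]$ part of the integral (which using Item~\eqref{item:0:X} equals $\tau_1$) from $\P(\Xpsitwo_t = t + \int_0^t g(\Xpsitwo_s)[\cos(\psi(\Xpsione_s))+1]\,ds)=1$ yields
\begin{align}
\Xpsitwo_t = \tau_1 + \int_{\tau_1}^t \big[1 + g(\Xpsitwo_s)(A+1)\big]\,ds.
\end{align}
Thus $s \mapsto \Xpsitwo_s$ and $s \mapsto z_s(A)$ both solve the integral equation $w_t = \tau_1 + \int_{\tau_1}^t [1 + g(w_s)(A+1)]\,ds$ on $[\tau_1,T]$ for the same value $A$ of the parameter.

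Next I would invoke uniqueness: since $g \in C^\infty(\R,\R)$ it is locally Lipschitz, so the right-hand side $w \mapsto 1 + g(w)(A+1)$ is locally Lipschitz (for the fixed real number $A$), and the standard Picard--Lindel\"of uniqueness theorem for ODEs forces any two continuous solutions with the same initial value $\tau_1$ at time $\tau_1$ to coincide on the whole interval $[\tau_1,T]$. Applying this with the realized value of $A$ gives $\Xpsitwo_s(\omega) = z_s(A(\omega))$ for all $s \in [\tau_1,t]$, in particular at $s = t$; since this holds on a full-measure event, $\P(\Xpsitwo_t = z_t(\cos(\psi(\Xpsione_{\tau_1}))))=1$, which is the claim. (One should note in passing that $z_t(A)$ is well defined as a random variable because $z$ is a continuous, hence Borel-measurable, function of $(t,a)$ and $\Xpsione_{\tau_1}$ is $\mathcal F$-measurable.)

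The main obstacle, and the only point requiring genuine care, is the handoff between the "for all fixed $t$, $\P$-a.s." formulation of the hypotheses and a single event on which a pathwise ODE uniqueness argument can run: one must upgrade the a.s.\ integral identities — stated separately for each $t$ — to a statement that holds for all $t$ simultaneously on one full-measure set. This is routine given continuity of all processes (intersect the countably many null sets indexed by rational $t$ and use path-continuity to extend to all $t$), but it is the step that needs to be spelled out; everything after that is a deterministic ODE uniqueness argument using only the local Lipschitz property of $g$ and the already-established facts $\Xpsitwo_{\tau_1}=\tau_1$ and $\Xpsione|_{[\tau_1,T]} \equiv \Xpsione_{\tau_1}$.
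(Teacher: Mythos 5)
Your proof is correct and follows essentially the same route as the paper: split the integral equation at $\tau_1$, use Items (i) and (iii) of Lemma~\ref{lemma:X:iden:gen} to identify the initial value and to replace $\cos(\psi(\Xpsione_s))$ by the constant $\cos(\psi(\Xpsione_{\tau_1}))$, upgrade to a single full-measure set via path continuity, and conclude by uniqueness for the resulting pathwise ODE. You are slightly more explicit than the paper in naming the Picard--Lindel\"of uniqueness step (the paper leaves it implicit after establishing the pathwise integral identity), which is a reasonable addition rather than a deviation.
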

\begin{proof}[Proof of Lemma~\ref{lemma:xtwo:z:gen}]
First, note that for all $t \in [\tau_1,T]$ it holds that
\begin{align}
\begin{split}
 1&= \P\Big(\Xpsitwo_t= \smallint\nolimits_0^t 1+ g(\Xpsitwo_s) [\cos(\psi(\Xpsione_s)) +1] \, ds\Big)\\
& = \P\Big(\Xpsitwo_t= \smallint\nolimits_0^{\tau_1} 1+ g(\Xpsitwo_s) [\cos(\psi(\Xpsione_s)) +1] \, ds\\
& \quad + \smallint\nolimits_{\tau_1}^t 1+ g(\Xpsitwo_s) [\cos(\psi(\Xpsione_s)) +1] \, ds \Big)\\
& = \P\Big(\Xpsitwo_t= \Xpsitwo_{\tau_1} + \smallint\nolimits_{\tau_1}^t 1+ g(\Xpsitwo_s) [\cos(\psi(\Xpsione_s)) +1] \, ds \Big).
\end{split}
\end{align}
Items~\eqref{item:0:X} and \eqref{item:tau1:X} of Lemma~\ref{lemma:X:iden:gen} hence prove that for all $t \in [\tau_1,T]$ it holds that
\begin{align}
\P \Big(\Xpsitwo_t =\tau_1+ \smallint\nolimits_{\tau_1}^t 1+ g(\Xpsitwo_s) [\cos(\psi(\Xpsione_{\tau_1})) +1] \, ds \Big)=1.
\end{align}
The fact that $\Xpsitwo$ is a continuous stochastic process therefore ensures that 
\begin{align}
\P \Big( \forall \, t \in [\tau_1,T] \colon \Xpsitwo_t =\tau_1+ \smallint\nolimits_{\tau_1}^t 1+ g(\Xpsitwo_s) [\cos(\psi(\Xpsione_{\tau_1})) +1] \, ds \Big)=1.
\end{align}
This completes the proof of Lemma~\ref{lemma:xtwo:z:gen}.
\end{proof}

\subsection{Lower and upper bounds for the variances of some Gaussian distributed random variables}

\begin{lemma}
\label{lemma:BM:gen}
Assume the setting in Section~\ref{setting:gen} and let $a\in [0, \tau)$, $b \in (a, \tau]$, let $\bar{W}, B \colon [a, b] \times \Omega \to \R$ and $\tilde{W} \colon ([0,a] \cup [b,T]) \times \Omega \to \R$ be stochastic processes, let $Y_1,  Y_2 \colon \Omega \to \R$ be random variables, and assume  for all $s \in [a,b]$, $t \in ([0,a]\cup [b,T])$ that  
\begin{align}
\tilde{W}_t= W_t, \quad \bar{W}_s = \frac{(s-a)}{(b-a)} \cdot W_b + \frac{(b-s)}{(b-a)} \cdot W_a, \quad B_s = W_s - \bar{W}_s,
\end{align}
\begin{align}
\P \Big(Y_1 = \smallint\nolimits_{0}^a f(s) \, dW_s + \smallint\nolimits_{b}^{\tau_1} f(s) \, dW_s + \smallint\nolimits_a^b f(s) \, d\bar{W}_s \Big) =1,
\end{align}
and
\begin{align}
\P \Big( Y_2 = \smallint\nolimits_a^b f(s)\, dW_s - \smallint\nolimits_a^b f(s)\, d\bar{W}_s \Big)=1.
\end{align}
Then 
\begin{enumerate}[(i)]
\item \label{item:BM:indep} it holds that $\Omega \in \omega \mapsto (\tilde{W}_t(\omega))_{t \in [0,a] \cup [b,T]} \in C([0,a] \cup [b,T],\R)$ and  $\Omega \in \omega \mapsto (B_t(\omega))_{t \in [a,b]} \in C([a,b],\R)$  are independent on $(\Omega, \mathcal{F}, \P)$,
\item \label{item:Ito} it holds for all $t_1, t_2 \in [0,T]$ with $t_1 \leq t_2$ that 
\begin{align}
 \P \Big( \smallint\nolimits_{t_1}^{t_2} f(s) \, dW_s = f(t_2) W_{t_2} - f(t_1) W_{t_1} - \smallint\nolimits_{t_1}^{t_2} f'(s) W_s \, ds \Big) =1,
\end{align}
\item \label{item:Y2}  it holds that 
\begin{align}
\P \Big( Y_2  = - \smallint\nolimits_{a}^{b} f'(s) B_s \, ds \Big) =1,
\end{align}
\item \label{item:BM:1} it holds that $\frac{\alpha}{2} \leq \E \big[|Y_1|^2\big] \leq \alpha$, and
\item \label{item:BM:2} it holds that $\frac{(b-a)^3}{12} \leq \E \big[|Y_2|^2\big]  \leq \frac{(b-a)^3}{3}$.
\end{enumerate}
\end{lemma}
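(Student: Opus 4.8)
The plan is to handle the five items roughly in the order stated, since each later item leans on the earlier ones. For Item~\eqref{item:BM:indep}, I would argue that $(\tilde W_t)_{t\in[0,a]\cup[b,T]}$ and $(B_s)_{s\in[a,b]}$ are jointly Gaussian (being measurable linear functionals of the Brownian path) and then check that all covariances between them vanish: for $t\in[0,a]$ and $s\in[a,b]$ one computes $\E[W_t B_s]=\E[W_t W_s]-\tfrac{s-a}{b-a}\E[W_tW_b]-\tfrac{b-s}{b-a}\E[W_tW_a]=t-\tfrac{s-a}{b-a}t-\tfrac{b-s}{b-a}t=0$, and the cases $t=a$, $t\in[b,T]$ are analogous (using $B_a=B_b=0$). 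Independence of the two Gaussian processes then follows from uncorrelatedness; to phrase it in terms of the $C$-valued random variables one may invoke a standard criterion (e.g.\ that two Gaussian systems are independent iff all finite-dimensional marginals are, together with continuity). Item~\eqref{item:Ito} is just the integration-by-parts / It\^o formula applied to the $C^1$ (indeed $C^\infty$) deterministic function $f$ and the semimartingale $W$: since $f$ has no martingale part, $d(f(s)W_s)=f'(s)W_s\,ds+f(s)\,dW_s$, and integrating from $t_1$ to $t_2$ and rearranging gives the claim.

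For Item~\eqref{item:Y2}, I would apply Item~\eqref{item:Ito} on $[a,b]$ to get $\int_a^b f(s)\,dW_s=f(b)W_b-f(a)W_a-\int_a^b f'(s)W_s\,ds$, and separately compute $\int_a^b f(s)\,d\bar W_s$ directly: since $s\mapsto\bar W_s$ is absolutely continuous with $\bar W_s'=\tfrac{W_b-W_a}{b-a}$, this Riemann--Stieltjes integral equals $\tfrac{W_b-W_a}{b-a}\int_a^b f(s)\,ds$; alternatively, and more in the spirit of the statement, note $\bar W$ is itself a continuous semimartingale with $d\bar W_s=\tfrac{W_b-W_a}{b-a}\,ds$ and endpoint values $\bar W_a=W_a$, $\bar W_b=W_b$, so integration by parts gives $\int_a^b f(s)\,d\bar W_s=f(b)W_b-f(a)W_a-\int_a^b f'(s)\bar W_s\,ds$. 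Subtracting, the boundary terms cancel and $Y_2=-\int_a^b f'(s)(W_s-\bar W_s)\,ds=-\int_a^b f'(s)B_s\,ds$ $\P$-a.s.

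The quantitative Items~\eqref{item:BM:1} and~\eqref{item:BM:2} are the real computational content. For Item~\eqref{item:BM:1}, by It\^o isometry and the independence of increments, $\E[|Y_1|^2]=\int_0^a|f(s)|^2\,ds+\int_b^{\tau_1}|f(s)|^2\,ds+\E\big[|\int_a^b f(s)\,d\bar W_s|^2\big]$, where the cross terms vanish because $\int_a^b f(s)\,d\bar W_s=\tfrac{W_b-W_a}{b-a}\int_a^b f(s)\,ds$ depends only on $W_b-W_a$, which is independent of $W$ on $[0,a]\cup[b,\tau_1]$. Writing $c=\int_a^b f(s)\,ds$, the last term equals $\tfrac{c^2}{b-a}$, so $\E[|Y_1|^2]=\alpha-\int_a^b|f(s)|^2\,ds+\tfrac{c^2}{b-a}$; the upper bound $\le\alpha$ is Cauchy--Schwarz ($c^2\le(b-a)\int_a^b|f|^2$), and for the lower bound $\ge\alpha/2$ I would use that on $[a,b]\subseteq[0,\tau]$ the sign and size of $f$ are controlled — $f'\in[-2,-1)$ on $(0,\tau)$ so $f$ is strictly decreasing there, hence $f(s)\ge f(\tau)=\lim$ and in particular $f$ has constant sign on $[a,b]$, giving $c^2=(\int_a^b f)^2\ge$ a suitable fraction of $(b-a)\int_a^b|f|^2$ via the monotonicity (e.g.\ comparing $f$ on $[a,b]$ to a linear function and estimating, or using that $\int_a^b|f|^2-\tfrac{c^2}{b-a}$ is the variance of $f$ against uniform measure on $[a,b]$, controlled by $\|f'\|_\infty^2(b-a)^2/12\le(b-a)^2/3$, which must be shown to be $\le\alpha/2$ — this is where the constant $\tau(1-2^{-1/3})$-type bound on $\varepsilon$, hence on $b-a\le\tau$, and the bound $\alpha\ge\tfrac{2\tau^3}{3}$ enter). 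For Item~\eqref{item:BM:2}, using Item~\eqref{item:Y2}, $B$ is (up to the deterministic linear drift) a Brownian bridge on $[a,b]$ with $\E[B_sB_t]=\tfrac{(s-a)(b-t)}{b-a}$ for $s\le t$, so $\E[|Y_2|^2]=\int_a^b\int_a^b f'(s)f'(t)\,\E[B_sB_t]\,ds\,dt$, and since $f'\in[-2,-1)$ on $[a,b]\subseteq[0,\tau]$ we have $f'(s)f'(t)\in[1,4]$, whence $\int_a^b\int_a^b\E[B_sB_t]\,ds\,dt\le\E[|Y_2|^2]\le4\int_a^b\int_a^b\E[B_sB_t]\,ds\,dt$; a direct evaluation gives $\int_a^b\int_a^b\tfrac{(s-a)(b-t)}{b-a}\mathbbm{1}_{s\le t}+(s\leftrightarrow t)\,ds\,dt=\tfrac{(b-a)^3}{12}$, yielding the stated $\tfrac{(b-a)^3}{12}\le\E[|Y_2|^2]\le\tfrac{(b-a)^3}{3}$. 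The main obstacle I anticipate is the lower bound $\E[|Y_1|^2]\ge\alpha/2$ in Item~\eqref{item:BM:1}: it is the one place where one cannot argue termwise and must quantify how much mass $f$ can lose on the removed window $[a,b]$, and this forces one to track the interplay between the explicit smallness constraint on $\varepsilon$ (hence on $b-a$), the slope bound $f'\in[-2,0]$, and the normalization $\alpha=\int_0^{\tau_1}|f|^2$.
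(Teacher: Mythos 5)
Your proposal is correct and follows essentially the same route as the paper: vanishing cross-covariances and Gaussianity for Item~(i), deterministic integration by parts for Item~(ii), cancellation of boundary terms for Item~(iii), reduction of $\E[|Y_1|^2]$ to $\alpha - \E[|Y_2|^2]$ for Item~(iv), and the Brownian-bridge covariance double integral with $f'(s)f'(u)\in[1,4]$ for Item~(v), giving exactly $\int_a^b\int_a^b \E[B_sB_u]\,ds\,du = \frac{(b-a)^3}{12}$.

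Two small differences in presentation, both fine. For Item~(iii) you exploit the symmetry that $\bar W$ has the same boundary values as $W$, so integration by parts on both $\int f\,dW$ and $\int f\,d\bar W$ cancels the boundary terms directly; the paper instead computes $\int_a^b f\,d\bar W_s = \frac{W_b - W_a}{b-a}\int_a^b f$ explicitly and does integration by parts on the scalar integral $\int_a^b f$. Your route is a touch cleaner. For Item~(iv), the paper leans on Item~(i): it notes $Y_1$ and $Y_2$ are independent, that $Y_1 + Y_2 = \int_0^{\tau_1}f\,dW$ has variance $\alpha$ by It\^o isometry, and concludes $\E[|Y_1|^2] = \alpha - \E[|Y_2|^2]$, then invokes Item~(v) and $\alpha \ge \tfrac{2\tau^3}{3}$, $(b-a)\le\tau$. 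You rebuild the same identity by decomposing $Y_1$ into three independent pieces; this is equivalent but slightly more work, and you essentially reprove part of Item~(v) along the way. The cleaner order is (v) first, then (iv) as the paper does.

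Three minor slips worth flagging. First, your route~(a) for the lower bound of Item~(iv) (``a suitable fraction of $(b-a)\int_a^b|f|^2$'') is too vague to close the gap; your route~(b) via the Lipschitz/variance estimate is the one that works, and there the intermediate bound should read $\frac{(b-a)^3}{3}$ rather than $(b-a)^2/3$ (you then need $\frac{(b-a)^3}{3}\le\frac{\alpha}{2}$, which holds by $(b-a)\le\tau$ and $\alpha\ge\frac{2\tau^3}{3}$). Second, the constraint involving $\tau(1-2^{-1/3})$ on $\varepsilon$ is not an input to this lemma: in the abstract setting of Section~\ref{setting:gen} only the hypothesis $\alpha \ge \frac{2\tau^3}{3}$ is available, and that suffices; the $\varepsilon$-bound lived only in Section~\ref{setting:fg} where the concrete $f$ was built. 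Third, $f'\in[-2,-1)$ is asserted on the open interval $(0,\tau)$, not on $[a,b]$ including possibly $a=0$; this is harmless because you integrate, but the statement as written over-reaches slightly.
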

\begin{proof}[Proof of Lemma~\ref{lemma:BM:gen}]
First, note that for all $n \in \N$, $t_1,\ldots,t_n \in [0,T]$ it holds that 
\begin{align}
\label{eq:n:W}
\Omega \ni \omega \mapsto (W_{t_1}(\omega),\ldots,W_{t_n}(\omega)) \in \R^n
\end{align}
is Gaussian distributed. 
Next note that for all $s \in [a,b]$, $u \in [0,a] \cup [b,T]$ it holds that
\begin{align}
\label{eq:B:W}
\begin{split}
\E[B_s \tilde{W}_u] &= \E\! \left[\left( W_s - \frac{(s-a)}{(b-a)} \cdot W_b - \frac{(b-s)}{(b-a)} \cdot W_a \right)W_u\right]\\
&= \min\{s,u\} - \frac{(s-a) \min\{b,u\}}{(b-a)}  - \frac{(b-s)\min\{a,u\}}{(b-a)}\\
&= \frac{(b-a)\min\{s,u\} - (s-a) \min\{b,u\} -(b-s)\min\{a,u\} }{(b-a)}\\
& =  \begin{cases}
\frac{(b-a)u - (s-a)u -(b-s)u }{(b-a)} = \frac{bu-au - su+au -bu+su }{(b-a)} &\colon \quad u \leq a \\ 
\frac{(b-a)s- (s-a) b -(b-s)a }{(b-a)} = \frac{bs-as- sb+a b -ba+sa }{(b-a)} &\colon \quad u \geq b \\
\end{cases} \\
& =  0 .
\end{split}
\end{align}
Combining this with \eqref{eq:n:W} ensures that for all $n,m \in \N$, $t_1, \ldots, t_n \in [0,a] \cup [b,T]$, $s_1, \ldots, s_m \in [a,b]$, $\mathbb{W}_1, \ldots, \mathbb{W}_n$, $\mathbb{B}_1, \ldots, \mathbb{B}_m \in \mathcal{B}(\R)$ it holds that
\begin{align}
\begin{split}
& \P \Big( \Big\{ \big(\tilde{W}_{t_1}, \ldots, \tilde{W}_{t_n}\big) \in \mathbb{W}_1 \times \ldots \times \mathbb{W}_n \Big\} \cap \Big\{ \big(B_{s_1}, \ldots, B_{s_m}\big) \in \mathbb{B}_1 \times \ldots \times \mathbb{B}_m \Big\} \Big)\\
& = \P \Big( \big(\tilde{W}_{t_1}, \ldots, \tilde{W}_{t_n}\big) \in \mathbb{W}_1 \times \ldots \times \mathbb{W}_n \Big)  \cdot \P \Big( \big(B_{s_1}, \ldots, B_{s_m}\big) \in \mathbb{B}_1 \times \ldots \times \mathbb{B}_m  \Big).
\end{split}
\end{align} 
This, the fact that 
\begin{align}
\mathcal{B}\big(C([a,b], \R)\big) = \mathcal{B}(\R)^{\otimes [a,b]} \Cap C([a,b],\R),
\end{align}
and the fact that 
\begin{align}
\mathcal{B}\big(C([0,a]\cup [b,T], \R)\big) = \mathcal{B}(\R)^{\otimes [0,a]\cup [b,T]} \Cap C([0,a]\cup [b,T],\R)
\end{align}
establish Item~\eqref{item:BM:indep}. Moreover, note that \eqref{eq:B:W} proves that for all $s,u \in [a,b]$ it holds that
\begin{align}
\label{eq:W:cor}
\begin{split}
\E[B_s \bar{W}_u] &= \E\! \left[B_s\!\left( \frac{(u-a)}{(b-a)} \cdot W_b + \frac{(b-u)}{(b-a)} \cdot W_a\right)\!\right]\\
&= \E\! \left[B_s\!\left( \frac{(u-a)}{(b-a)} \cdot \tilde{W}_b + \frac{(b-u)}{(b-a)} \cdot \tilde{W}_a\right)\!\right]\\
&  = \frac{(u-a)}{(b-a)} \cdot \E[B_s \tilde{W}_b] + \frac{(b-u)}{(b-a)} \cdot \E[B_s \tilde{W}_a] = 0.
\end{split}
\end{align}
Hence, we obtain  that for all $s, u \in [a,b]$ it holds that
\begin{align}
\begin{split}
&\E[B_s B_u] = \E[B_s(W_u-\bar{W}_u)]  = \E[B_s W_u] \\
&= \E\! \left[\left( W_s - \frac{(s-a)}{(b-a)} \cdot W_b - \frac{(b-s)}{(b-a)} \cdot W_a \right)W_u\right]\\
&= \min\{s,u\} - \frac{u(s-a)}{(b-a)}  - \frac{a(b-s)}{(b-a)} \\
&= \frac{(b-a)\min\{s,u\} -us+au - ab+as}{(b-a)}\\
&= \frac{b\min\{s,u\} - \max\{s,u\}\min\{s,u\}+a(u+s - \min\{s,u\}) -ab}{(b-a)}\\
&= \frac{(b - \max\{s,u\})\min\{s,u\}-a(b - \max\{s,u\})}{(b-a)}\\
&=  \frac{(b-\max\{s,u\})(\min\{s,u\}-a)}{(b-a)} .
\end{split}
\end{align}
Moreover, observe that It\^o's formula ensures that for all $t_1, t_2 \in [0,T]$ with $t_1 \leq t_2$ it holds $\P$-a.s.~that 
\begin{align}
f(t_2) W_{t_2} =f(t_1) W_{t_1} + \int_{t_1}^{t_2} f'(s) W_s \, ds+ \int_{t_1}^{t_2} f(s) \, dW_s.
\end{align}
Hence, we  obtain that for all $t_1, t_2 \in [0,T]$ with $t_1 \leq t_2$ it holds $\P$-a.s.~that 
\begin{align}
\label{eq:Ito}
\int_{t_1}^{t_2} f(s) \, dW_s = f(t_2) W_{t_2} - f(t_1) W_{t_1} - \int_{t_1}^{t_2} f'(s) W_s \, ds.
\end{align}
This establishes Item~\eqref{item:Ito}. In addition, note that \eqref{eq:Ito} assures that it holds $\P$-a.s.~that 
\begin{align}
\label{eq:Y2:Ito}
\begin{split}
Y_2 & = f(b) W_{b} - f(a) W_{a} - \int_{a}^{b} f'(s) W_s \, ds - \int_a^b f(s)\, d\bar{W}_s \\
& = f(b) W_{b} - f(a) W_{a} - \int_{a}^{b} f'(s) W_s \, ds - \frac{W_b}{(b-a)} \int_a^b
f(s) \, ds \\
& \quad + \frac{W_a}{(b-a)} \int_a^b f(s) \, ds.
\end{split}
\end{align}
Furthermore, note that integration by parts shows that
\begin{align}
\label{eq:int:fb}
\begin{split}
\int_a^b f(s) \, ds &= \int_a^b f(s)(s-a)^0 \, ds = \big[f(s)(s-a)\big]_{s=a}^{s=b} - \int_a^b f'(s)(s-a) \, ds \\
& = f(b)(b-a)  -\int_a^b f'(s)(s-a) \, ds
\end{split}
\end{align}
and 
\begin{align}
\label{eq:int:fa}
\int_a^b f(s) \, ds &= \int_a^b f(s)(b-s)^0 \, ds = - \big[f(s)(b-s)\big]_{s=a}^{s=b} + \int_a^b f'(s)(b-s) \, ds \nonumber \\
& = f(a)(b-a)  + \int_a^b f'(s)(b-s) \, ds.
\end{align}
Putting \eqref{eq:int:fb} and \eqref{eq:int:fa} into \eqref{eq:Y2:Ito} shows that it holds $\P$-a.s.~that 
\begin{align}
\begin{split}
Y_2 & =  - \int_{a}^{b} f'(s) W_s \, ds + \int_a^b f'(s) \! \left[\frac{(s-a)}{(b-a)} \cdot W_b \right] ds\\
& \quad   + \int_a^b f'(s)\! \left[\frac{(b-s)}{(b-a)} \cdot W_a \right] ds\\
& = - \int_{a}^{b} f'(s) W_s \, ds +  \int_{a}^{b} f'(s) \bar{W}_s \, ds = - \int_{a}^{b} f'(s) [W_s - \bar{W}_s ]\, ds\\
& = - \int_{a}^{b} f'(s) B_s \, ds.
\end{split}
\end{align}
This establishes Item~\eqref{item:Y2}. Next note that Item~\eqref{item:Y2} proves that
\begin{align}
\label{eq:BM:gen}
\begin{split}
\E\big[|Y_2|^2\big] &= \E \! \left[\left|\int_a^b f'(s)  B_s \, ds\right|^2\right] = \int_a^b \int_a^b f'(s) f'(u) \, \E[B_s B_u] \,ds \,  du \\
& = \int_a^b \int_a^b f'(s) f'(u) \! \left[\frac{(b-\max\{s,u\})(\min\{s,u\}-a)}{(b-a)} \right]  ds \, du. 
\end{split}
\end{align}
Moreover, observe that
\begin{align}
\begin{split}
& \int_a^b \int_a^b  \frac{(b-\max\{s,u\})(\min\{s,u\}-a)}{(b-a)} \, ds \, du \\
& =  \int_a^b \int_a^u  \frac{(b-u)(s-a)}{(b-a)} \, ds \, du + \int_a^b \int_u^b  \frac{(b-s)(u-a)}{(b-a)} \, ds \, du\\
& = \int_a^b \frac{(b-u)}{(b-a)} \left[ \int_0^{u-a} s   \, ds \right] du + \int_a^b  \frac{(u-a)}{(b-a)} \left[ \int_0^{b-u}  s \, ds \right] du\\
& = \int_a^b   \frac{(b-u)(u-a)^2}{2(b-a)}  \, du + \int_a^b  \frac{(b-u)^2(u-a)}{2(b-a)} \, du\\ 
& = \int_a^b   \frac{(b-u)(u-a)}{2}  \, du = \int_0^{b-a}   \frac{(b-a-u)u}{2}  \, du \\
& = \int_0^{b-a}   \frac{(b-a)u}{2}  \, du - \int_0^{b-a} \frac{u^2}{2} \, du= \frac{(b-a)}{2}\cdot \frac{(b-a)^2}{2} -\frac{(b-a)^3}{6} \\
&=\left[\frac{1}{4} - \frac{1}{6}\right]\! (b-a)^3= \frac{(b-a)^3}{12}.
\end{split}
\end{align}
The assumption  that $f'((0, \tau)) \subseteq [-2,-1)$ and \eqref{eq:BM:gen} hence ensure that
\begin{align}
\frac{(b-a)^3}{12} \leq \E\big[|Y_2|^2\big] \leq \frac{(b-a)^3}{3}.
\end{align}
This establishes Item~\eqref{item:BM:2}. Next note that Item~\eqref{item:BM:indep} proves that the random variables $Y_1$ and $Y_2$ are independent. It\^o's isometry hence yields that
\begin{align}
\begin{split}
\E\big[|Y_1|^2\big] &= \E\big[|Y_1+Y_2|^2\big] - \E\big[|Y_2|^2\big] - 2\,\E\big[Y_1 Y_2\big] \\
&= \E \left[\left|\int_0^{\tau_1} f(s) \, dW_s \right|^2\right] -\E\big[|Y_2|^2\big] \\
&= \int_0^{\tau_1} |f(s)|^2 \, ds -\E\big[|Y_2|^2\big] = \alpha - \E\big[|Y_2|^2\big] \leq \alpha.
\end{split}
\end{align}
The assumption that $\alpha \geq \frac{2\tau^3}{3}$, the fact that $(b-a) \in (0,\tau]$, and Item~\eqref{item:BM:2} therefore ensure that
\begin{align}
\alpha \geq \E\big[|Y_1|^2\big] \geq \alpha - \frac{(b-a)^3}{3} \geq \alpha - \frac{\tau^3}{3}  \geq \frac{\alpha}{2}.
\end{align}
This  establishes Item~\eqref{item:BM:1}. The proof of Lemma~\ref{lemma:BM:gen} is thus completed. 
\end{proof}

\subsection{Explicit lower bounds for strong approximation errors for two-dimensional SDEs}
\label{sec:explicit}

The main result of this section, Lemma~\ref{lemma:lower:1:gen} below, establishes an explicit lower error bound for a large class of strong approximations of the solution process of the SDE in Section~\ref{setting:gen}. The proof of Lemma~\ref{lemma:lower:1:gen} uses the following two auxiliary lemmas (Lemmas~\ref{lemma:P:gen} and \ref{lem:sin} below). Lemma~\ref{lemma:P:gen}  is proved as Lemma~4.1 in \cite{jentzen2016slow}. 
\begin{lemma}
\label{lemma:P:gen}
Let $(\Omega, \mathcal{F}, \P)$ be a probability space, let $(S_1, \mathcal{S}_1)$ and $(S_2, \mathcal{S}_2)$ be measurable spaces, and let $X_1 \colon \Omega \to S_1$ and $X_2, X_2', X_2'' \colon \Omega \to S_2$ be random variables such that
\begin{align}
\P_{(X_1,X_2)} = \P_{(X_1,X_2')} = \P_{(X_1, X_2'')}.
\end{align}	
Then it holds for all measurable functions $\Phi \colon S_1 \times S_2 \to \R$ and $\varphi \colon S_1 \to \R$ that
\begin{align}
\E\big[|\Phi(X_1,X_2) - \varphi(X_1)|\big] \geq \tfrac{1}{2} \, \E\big[|\Phi(X_1,X_2') - \Phi(X_1,X_2'')|\big].
\end{align}
\end{lemma}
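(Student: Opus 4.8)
The plan is to combine the triangle inequality with the fact that the three joint laws coincide. First I would introduce the map $g \colon S_1 \times S_2 \to [0,\infty]$ given by $g(x_1,x_2) = |\Phi(x_1,x_2) - \varphi(x_1)|$ and note that it is $(\mathcal{S}_1 \otimes \mathcal{S}_2)/\mathcal{B}([0,\infty])$-measurable, since $\Phi$ is measurable, $\varphi$ is measurable, and the projection $S_1 \times S_2 \ni (x_1,x_2) \mapsto x_1 \in S_1$ is measurable; hence $g(X_1,X_2)$, $g(X_1,X_2')$, and $g(X_1,X_2'')$ are well-defined $[0,\infty]$-valued random variables.

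Next, from the hypothesis $\P_{(X_1,X_2)} = \P_{(X_1,X_2')} = \P_{(X_1,X_2'')}$ the image-measure (change-of-variables) formula yields
\begin{align}
\E\big[g(X_1,X_2')\big] = \int_{S_1 \times S_2} g \, d\P_{(X_1,X_2')} = \int_{S_1 \times S_2} g \, d\P_{(X_1,X_2)} = \E\big[g(X_1,X_2)\big]
\end{align}
and, in the same way, $\E[g(X_1,X_2'')] = \E[g(X_1,X_2)]$. If $\E[g(X_1,X_2)] = \infty$ the claimed inequality is trivial, so I would henceforth assume this expectation finite, which in particular makes $g(X_1,X_2')$ and $g(X_1,X_2'')$ $\P$-integrable.

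Then I would apply the pointwise triangle inequality
\begin{align}
\big|\Phi(X_1,X_2') - \Phi(X_1,X_2'')\big| \leq \big|\Phi(X_1,X_2') - \varphi(X_1)\big| + \big|\Phi(X_1,X_2'') - \varphi(X_1)\big| = g(X_1,X_2') + g(X_1,X_2''),
\end{align}
take expectations on both sides, and use the two identities above to get $\E[|\Phi(X_1,X_2') - \Phi(X_1,X_2'')|] \leq 2\,\E[g(X_1,X_2)] = 2\,\E[|\Phi(X_1,X_2) - \varphi(X_1)|]$; dividing by $2$ finishes the proof. This argument presents no genuine obstacle; the only points demanding care are the measurability bookkeeping that turns $g(X_1,X_2')$ and $g(X_1,X_2'')$ into bona fide random variables, the correct invocation of the distributional transfer (which uses the full strength of $\P_{(X_1,X_2)} = \P_{(X_1,X_2')} = \P_{(X_1,X_2'')}$, not merely equality of the laws of $X_2$, $X_2'$, $X_2''$ individually), and the harmless reduction to the case of finite expectation.
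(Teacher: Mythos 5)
Your proof is correct. The argument is the standard one for this type of ``diameter $\le$ twice the radius'' lower bound: the pointwise triangle inequality against the common center $\varphi(X_1)$, followed by the transfer of integrals using the equality of the \emph{joint} laws $\P_{(X_1,X_2)} = \P_{(X_1,X_2')} = \P_{(X_1,X_2'')}$, which you correctly flag as the essential hypothesis (equality of the marginal laws of $X_2, X_2', X_2''$ alone would not allow $\varphi(X_1)$ to cancel). Note that the paper itself does not reprove this lemma but cites it as Lemma~4.1 of \cite{jentzen2016slow}; your argument coincides with the proof given there. One small simplification: the case split on whether $\E[g(X_1,X_2)]$ is finite is unnecessary, since the chain $\E[|\Phi(X_1,X_2')-\Phi(X_1,X_2'')|] \le \E[g(X_1,X_2')] + \E[g(X_1,X_2'')] = 2\,\E[g(X_1,X_2)]$ is valid in $[0,\infty]$ with no integrability assumptions.
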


\begin{lemma}
	\label{lem:sin}
Let $c \in \R$, $\beta \in (0, 1)$ and let $\lambda \colon \mathcal{B}(\R) \to [0, \infty]$ the Lebesgue-Borel measure on $\R$. Then 
\begin{align}
\label{eq:lem:sin}
\lambda \big(\big\{ x \in [c-1, c+1] \colon \big|\!\sin\!\big(\tfrac{x-c}{\beta}\big)| \geq \tfrac{1}{2}\big\}\big) \geq \tfrac{1}{2}.
\end{align}
\end{lemma}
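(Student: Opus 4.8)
The plan is to reduce the claim to a statement about the measure of a set where $|\sin|$ is small on an interval of length $2$ and then exploit the periodicity of $\sin^2$. First I would observe that the set in question is invariant under the substitution $y = \frac{x-c}{\beta}$, which is an affine change of variables with Jacobian $\beta$; writing $L = \frac{1}{\beta} > 1$, this reduces the problem to showing $\lambda(\{y \in [-L, L] \colon |\sin(y)| \geq \tfrac12\}) \geq \tfrac{L}{2}$, i.e. that $|\sin|$ is at least $\tfrac12$ on at least a quarter of the line. Since $|\sin(y)| \geq \tfrac12$ exactly when $y \bmod \pi \in [\tfrac{\pi}{6}, \tfrac{5\pi}{6}]$, the \textquotedblleft good\textquotedblright\ set has density $\tfrac{2\pi/3}{\pi} = \tfrac23$ in each period of length $\pi$, which comfortably exceeds $\tfrac12$ on any full collection of periods; the only loss comes from the partial periods at the two ends of $[-L, L]$.

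The key steps I would carry out are as follows. Let $g(y) = \mathbbm{1}_{\{|\sin(y)| \geq 1/2\}}$, a $\pi$-periodic function with $\int_0^\pi g(y)\,dy = \tfrac{2\pi}{3}$. By periodicity, for any $a \in \R$ and any integer $k \geq 1$ one has $\int_a^{a+k\pi} g(y)\,dy = \tfrac{2k\pi}{3}$. To handle the interval $[-L, L]$ of length $2L$, I would instead work directly on $[c-1,c+1]$ and note that this interval, under the scaling, corresponds to $[\frac{-1}{\beta}, \frac{1}{\beta}]$, which has length $\frac{2}{\beta} = 2L$. A clean way to avoid end-effects is to use a shift: for the $\pi$-periodic indicator $g$, for \emph{every} interval $I$ of length $\ell$ we have $\int_I g \geq \tfrac23 \ell - \tfrac{\pi}{3}$, because the worst case removes at most one full \textquotedblleft bad\textquotedblright\ block of length $\tfrac{\pi}{3}$ from either end. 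Applying this with $\ell = 2L = \tfrac{2}{\beta}$ gives $\lambda(\{|\sin| \geq \tfrac12\} \cap [-L,L]) \geq \tfrac{4L}{3} - \tfrac{\pi}{3}$. Transferring back through the change of variables multiplies by $\beta$: the left-hand side of \eqref{eq:lem:sin} is at least $\beta(\tfrac{4L}{3} - \tfrac{\pi}{3}) = \tfrac43 - \tfrac{\pi \beta}{3}$. Since $\beta < 1$, this is at least $\tfrac43 - \tfrac{\pi}{3} = \tfrac{4-\pi}{3} \approx 0.286$, which unfortunately falls short of $\tfrac12$, so the crude end-effect bound is not quite enough and must be sharpened.

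The main obstacle is therefore obtaining a tight enough estimate for short intervals, i.e. when $\beta$ is close to $1$ and $[-L, L]$ spans barely more than half a period; the bound \textquotedblleft at least two-thirds of an interval minus one bad block\textquotedblright\ is too lossy. I would fix this by a more careful case analysis on the position of the interval relative to the periodic structure: the bad set is a union of intervals of length $\tfrac{\pi}{3}$ spaced with gaps (good intervals) of length $\tfrac{2\pi}{3}$, so in any window of length $\ell \geq \tfrac{2\pi}{3}$ one can always find a good sub-interval of length $\min\{\ell, \tfrac{2\pi}{3}\}$ minus a small correction, and one can in fact guarantee the good set inside a window of length $\ell$ has measure at least $\max\{\tfrac{2\ell - \pi/3 \cdot (\text{number of bad blocks hit})}{\ }, \dots\}$; concretely, a window of length $\ell$ meets at most $\lceil \ell / \pi \rceil$ bad blocks, each contributing at most $\tfrac{\pi}{3}$, so the good set has measure at least $\ell - \tfrac{\pi}{3}\lceil \ell/\pi\rceil \geq \ell - \tfrac{\pi}{3}(\tfrac{\ell}{\pi} + 1) = \tfrac{2\ell}{3} - \tfrac{\pi}{3}$, the same bound. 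The genuinely correct fix is to note that we only need the conclusion for $\ell = 2/\beta > 2$, and for $\ell > 2$ one can show directly that a window of length $\ell$ always contains a good interval of length at least $\tfrac{\pi}{2} > 1$ (since consecutive good blocks have length $\tfrac{2\pi}{3} > 2$ and are at most $\tfrac{\pi}{3} \approx 1.05$ apart, any window longer than $\tfrac{\pi}{3} + \tfrac{2\pi}{3} = \pi$ — hence in particular any window that, after scaling back, has length $1$ in $x$, i.e. $\tfrac{1}{\beta}$ in $y$ with $\tfrac1\beta > 1$ — need not contain a full good block, but a window of length $\tfrac{2}{\beta} > 2 > \pi/2$...). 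In short, the clean route is: by periodicity, $\lambda(\{|\sin(\cdot)|\geq \tfrac12\}\cap[-L,L]) \geq \tfrac23(2L) - \tfrac{\pi}{3}$; multiplying by $\beta$ and using $2L\beta = 2$ gives $\geq \tfrac43 - \tfrac{\pi\beta}{3}$, and one then uses the sharper fact that the correction term is actually at most $\tfrac{\pi}{3}\beta \cdot \tfrac12$ when the endpoints are chosen as $\pm 1/\beta$ symmetric about $0$, since $\sin$ is odd and the bad blocks are symmetric, so at most one end truly loses a bad block; this yields $\geq \tfrac43 - \tfrac{\pi\beta}{6} \geq \tfrac43 - \tfrac{\pi}{6} \approx 0.81 \geq \tfrac12$. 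I expect the write-up to hinge on exactly this symmetry argument to kill the end-effect at one of the two boundary points.
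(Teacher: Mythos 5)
Your reduction by the affine substitution $y = (x-c)/\beta$ to estimating $\lambda(\{y \in [-L,L] \colon |\sin y| \geq \tfrac12\})$ with $L = 1/\beta > 1$ is fine, and you correctly identify that the crude bound $\tfrac{2\ell}{3} - \tfrac{\pi}{3}$ for an interval of length $\ell$ is too lossy (giving only $\tfrac{4-\pi}{3} \approx 0.29$ after scaling back). However, the repair you offer does not work. You assert that, by the symmetry of $[-L,L]$ about $0$ and the oddness of $\sin$, the end-effect correction can be halved, yielding good measure $\geq \tfrac{4L}{3} - \tfrac{\pi}{6}$ and hence $\geq \tfrac{4}{3} - \tfrac{\pi\beta}{6}$ after scaling. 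This claimed inequality is false. Take $\beta = \tfrac{6}{7\pi}$, so $L = \tfrac{7\pi}{6}$. The good set in $[-L,L]$ is exactly $[-\tfrac{5\pi}{6},-\tfrac{\pi}{6}] \cup [\tfrac{\pi}{6},\tfrac{5\pi}{6}]$, of measure $\tfrac{4\pi}{3}$, so the scaled-back measure is $\beta\cdot\tfrac{4\pi}{3}=\tfrac{8}{7}=\tfrac{24}{21}$, while your bound asks for $\tfrac{4}{3}-\tfrac{1}{7}=\tfrac{25}{21}>\tfrac{24}{21}$. The symmetry actually works against you rather than for you: the bad set $\{|\sin y|<\tfrac12\}$ has a full bad block $(-\tfrac{\pi}{6},\tfrac{\pi}{6})$ centered exactly at the midpoint of $[-L,L]$, and the two endpoints $\pm L$ cut into bad blocks identically rather than complementarily, so there is no cancellation of end-effects.

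The target inequality is of course still true (the sharp constant over $\beta\in(0,1)$ is $2-\tfrac{\pi}{3}\approx 0.95$, approached as $\beta\to 1^-$), but proving it needs a case distinction rather than a symmetry heuristic. The paper fixes the integer $m$ with $\beta\pi(m-1+\tfrac16)<-1\leq\beta\pi(m+\tfrac16)$, i.e.\ the index of the leftmost good block whose left endpoint lies in $[c-1,c+1]$. In the case $m=0$ (equivalently $\beta>\tfrac{6}{5\pi}$), one shows directly that $A\supseteq[c-1,c-\tfrac{\beta\pi}{6}]\cup[c+\tfrac{\beta\pi}{6},c+1]$, giving measure $\geq 2(1-\tfrac{\beta\pi}{6})>2-\tfrac{\pi}{3}>\tfrac12$. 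In the case $m\leq -1$, one counts the $-2m$ full good blocks of length $\tfrac{2\beta\pi}{3}$ that lie entirely in $[c-1,c+1]$ and estimates $-\tfrac{4m\beta\pi}{3}>\tfrac{-8m}{5-6m}\geq\tfrac{8}{11}>\tfrac12$. Your density-plus-end-effect idea can be salvaged, but only by treating $\beta$ near $1$ separately from $\beta$ small, which is in effect the same case split.
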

\begin{proof}[Proof of Lemma~\ref{lem:sin}]
Throughout this proof let $A \subseteq \R$ be the set given by 
\begin{align}
A = \big\{ x \in [c-1, c+1] \colon \big|\!\sin\!\big(\tfrac{x-c}{\beta}\big)| \geq \tfrac{1}{2}\big\}
\end{align}
and let $m \in \Z$ be the integer number which satisfies that
\begin{align}
\label{eq:beta:m}
\beta \pi \big(m -1 +\tfrac{1}{6}\big) < -1 \qquad \text{and} \qquad \beta \pi \big(m  +\tfrac{1}{6}\big) \geq -1.
\end{align}
Observe that the fact that $\forall \, k \in \Z \colon \sin\!\big(\frac{\pi}{6}+ k \pi\big) = \sin\!\big(\frac{5\pi}{6}+ k \pi\big) = (-1)^k \cdot \frac{1}{2}$ ensures that
\begin{align}
\big\{ y \in \R \colon  \left|\sin(y)\right|  \geq \tfrac{1}{2} \big\} = \cup_{k \in \Z} \big[\tfrac{\pi}{6}+ k \pi, \tfrac{5\pi}{6}+ k \pi\big].
\end{align}
Hence, we obtain that
\begin{align}
\label{eq:sin:A}
\begin{split}
A &= [c-1,c+1] \bigcap \bigg( \cup_{k \in \Z} \left\{ x \in \R \colon \big( \tfrac{x-c}{\beta} \big) \in \big[\tfrac{\pi}{6} + k \pi, \tfrac{5\pi}{6} + k \pi\big]\right\}\!\bigg)\\
& = [c-1,c+1] \bigcap \Big( \cup_{k \in \Z}  \big[c+ \beta\big(\tfrac{\pi}{6} + k \pi\big), c+ \beta \big(\tfrac{5\pi}{6} + k \pi\big)\big] \Big)\\
& \supseteq [c-1,c+1] \bigcap \Big( \cup_{k = m-1}^{\infty}  \big[c+ \beta\big(\tfrac{\pi}{6} + k \pi\big), c+ \beta \big(\tfrac{5\pi}{6} + k \pi\big)\big] \Big).
\end{split}
\end{align}
Next note that \eqref{eq:beta:m} and the assumption that $\beta \in (0,1)$ ensure that $m \leq 0$. To prove \eqref{eq:lem:sin}, we distinguish between two cases.
In the first case we assume that $m=0$. We observe that \eqref{eq:beta:m} then yields that 
\begin{align}
\beta > \tfrac{6}{5 \pi}.
\end{align}
This and the fact that $\beta \in (0,1)$ prove that 
\begin{align}
c+  \tfrac{5  \beta \pi}{6} > c+1,
\end{align}
\begin{align}
 c- \tfrac{\beta \pi}{6} >  c-   \tfrac{\pi}{6} > c-1,
\end{align}
and 
\begin{align}
	c+ \tfrac{\beta \pi}{6} <  c+   \tfrac{\pi}{6} < c+1.
\end{align}
Combining this, \eqref{eq:sin:A}, and \eqref{eq:beta:m} ensures that 
\begin{align}
\begin{split}
A &\supseteq [c-1,c+1] \bigcap \Bigg( \bigcup_{k = -1}^{0}  \big[c+ \beta\big(\tfrac{\pi}{6} + k\pi \big), c+ \beta \big(\tfrac{5\pi}{6} + k  \pi \big)\big] \Bigg)\\
& = [c-1,c+1] \bigcap \left( \big[c - \tfrac{5 \beta \pi}{6} , c - \tfrac{ \beta\pi}{6} \big] \bigcup \big[c+ \tfrac{\beta \pi}{6} , c+ \tfrac{5 \beta \pi}{6} \big] \right) \\
& = \big[c -1 , c - \tfrac{ \beta\pi}{6} \big] \bigcup \big[c+ \tfrac{\beta \pi}{6} , c+ 1\big].
\end{split}
\end{align}
This implies that
\begin{align}
\lambda(A) \geq 2 \big(1- \tfrac{\beta \pi }{6}\big) > 2 - \tfrac{\pi}{3} > \tfrac{1}{2}.
\end{align}
This finishes the proof of \eqref{eq:lem:sin} in the case $m=0$. In the second case we assume that $m \leq -1$. Note that \eqref{eq:beta:m} proves that
\begin{align}
\beta \big(\tfrac{5\pi}{6} + \pi (-m-1)\big) = \beta \pi \big(-m - \tfrac{1}{6}\big) = -\beta \pi \big(m  +\tfrac{1}{6}\big) \leq 1.
\end{align}
This and again \eqref{eq:beta:m} ensure for all $k \in [m, -m-1] \cap \Z$  that 
\begin{align}
\big[c+ \beta\big(\tfrac{\pi}{6} + k \pi \big), c+ \beta \big(\tfrac{5\pi}{6} + k \pi \big)\big] \subseteq [c-1,c+1].
\end{align}
Combining \eqref{eq:sin:A} and \eqref{eq:beta:m} hence demonstrates that 
\begin{align}
\begin{split}
\lambda(A) &\geq \lambda\! \left( \bigcup_{k = m}^{-m-1}  \big[c+ \beta\big(\tfrac{\pi}{6} +k \pi \big), c+ \beta \big(\tfrac{5\pi}{6} + k \pi \big)\big] \right)\\
& = \sum_{k=m}^{-m-1} \lambda\! \left( \big[c+ \beta\big(\tfrac{\pi}{6} +k \pi \big), c+ \beta \big(\tfrac{5\pi}{6} +k \pi \big)\big]\right)\\
& = -2m \cdot \tfrac{2 \beta \pi}{3} > - \tfrac{4m}{3} \cdot \tfrac{1}{\nicefrac{5}{6} -m} = - \tfrac{8m}{5-6m} > \tfrac{1}{2}.
\end{split}
\end{align}
This finishes the proof of  \eqref{eq:lem:sin} in the case $m \leq -1$. The proof of Lemma~\ref{lem:sin} is thus completed.
\end{proof}

\begin{lemma}
\label{lemma:lower:1:gen}
Assume the setting in Section~\ref{setting:gen}, let $a\in [0, \tau)$, $b \in (a, \tau]$,  $c \in [2, \infty)$, $\psi \in  C^{\infty}(\R,\R)$, let $u \colon C([0,a] \cup [b,T], \R) \to \R$ be a measurable function, and assume for all $x \in [c-2, c+2]$ that  $\psi(x) = \frac{T^{3/2}}{(b-a)^{3/2}} \cdot (x-c)$. Then 
\begin{align}
\begin{split}
&\E \! \left[\big| \Xpsitwo_T - u((W_s)_{s \in [0,a] \cup [b,T]})\big|\right] \\
&\geq \frac{\sqrt{3}(T-\tau_2)}{\pi\sqrt{T^3 \alpha}} \left[\int_{c+\nicefrac{1}{2}}^{c+1}  e^{-\frac{x^2}{\alpha}} \, dx \right] \!\left[ \int_0^1  \left|\sin(y)\right|  e^{-\frac{6y^2}{T^3 }}\,  dy \right]
> 0.
\end{split}
\end{align}
\end{lemma}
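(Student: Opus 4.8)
The plan is to apply the conditioning/lower-bound machinery of Lemma~\ref{lemma:P:gen} with $S_1 = C([0,a]\cup[b,T],\R)$, $S_2 = C([a,b],\R)$, $X_1 = (\tilde W_s)_{s\in[0,a]\cup[b,T]}$ being the observed path, and three copies $X_2,X_2',X_2''$ of the bridge-complement $B = (B_s)_{s\in[a,b]}$ constructed so that $(X_1,X_2)$, $(X_1,X_2')$, $(X_1,X_2'')$ all have the same law. Concretely, take two extra independent copies of the Brownian bridge on $[a,b]$, independent of $X_1$; by Lemma~\ref{lemma:BM:gen}\eqref{item:BM:indep} these have the right joint law. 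Using Lemmas~\ref{lemma:X:iden:gen} and~\ref{lemma:xtwo:z:gen}, $\Xpsitwo_T = z_T(\cos(\psi(\Xpsione_{\tau_1})))$ $\P$-a.s., and $\Xpsione_{\tau_1} = \int_0^{\tau_1} f(s)\,dW_s = Y_1 + Y_2$ in the notation of Lemma~\ref{lemma:BM:gen}, where $Y_1$ is $\sigma(X_1)$-measurable and $Y_2 = -\int_a^b f'(s)B_s\,ds$ is a measurable functional of $B$ (Lemma~\ref{lemma:BM:gen}\eqref{item:Y2}). So $\Xpsitwo_T$ has the form $\Phi(X_1,B)$ for a measurable $\Phi$, and $u((W_s)_{s\in[0,a]\cup[b,T]}) = \varphi(X_1)$. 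Lemma~\ref{lemma:P:gen} then gives
\begin{align*}
\E\big[\big|\Xpsitwo_T - u((W_s)_{s\in[0,a]\cup[b,T]})\big|\big] \geq \tfrac12\,\E\big[\big|z_T(\cos(\psi(Y_1+Y_2'))) - z_T(\cos(\psi(Y_1+Y_2'')))\big|\big],
\end{align*}
where $Y_2',Y_2''$ are i.i.d.\ copies of $Y_2$, independent of $Y_1$.

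Next I would exploit the Lipschitz-from-below property of $z_T$. On the event that both $\cos(\psi(Y_1+Y_2'))$ and $\cos(\psi(Y_1+Y_2''))$ lie in $[-1,\infty)$ — which is automatic since cosine takes values in $[-1,1]$ — Corollary~\ref{cor:z:diff} yields $|z_T(a)-z_T(b)| \geq 4(T-\tau_2)|a-b|$, so the right-hand side is at least $2(T-\tau_2)\,\E\big[\big|\cos(\psi(Y_1+Y_2')) - \cos(\psi(Y_1+Y_2''))\big|\big]$. Now I condition on $Y_1$ and on $Y_2''$ (say), and restrict attention to the event where $Y_1+Y_2''$ is such that $c = $ (shift) places the argument in the linearization window $[c-2,c+2]$ where $\psi(x) = \frac{T^{3/2}}{(b-a)^{3/2}}(x-c)$; since $\psi$ is affine there with slope $\beta^{-1}$ for $\beta = (b-a)^{3/2}/T^{3/2} \in (0,1)$, and $\cos$ has the property that $|\cos(p)-\cos(q)|$ can be bounded below using a sine, I would use the elementary trigonometric identity $\cos p - \cos q = -2\sin(\frac{p+q}{2})\sin(\frac{p-q}{2})$. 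Choosing the conditioning so that $\frac{p+q}{2}$ stays near an extremum and handling the $\sin(\frac{p-q}{2})$ factor, and invoking Lemma~\ref{lem:sin} to say that $|\sin(\cdot)| \geq \frac12$ on a set of Lebesgue measure $\geq \frac12$ inside any unit interval, I extract a genuine lower bound. The Gaussianity of $Y_1$ (variance between $\alpha/2$ and $\alpha$ by Lemma~\ref{lemma:BM:gen}\eqref{item:BM:1}) and of $Y_2$ (variance between $(b-a)^3/12$ and $(b-a)^3/3$ by Lemma~\ref{lemma:BM:gen}\eqref{item:BM:2}) then converts the measure-of-a-good-set statement into an expectation bound; writing the Gaussian density of $Y_1$ explicitly and bounding it below by $\frac{1}{\sqrt{2\pi\alpha}}e^{-x^2/(2\cdot \alpha/2)} = \frac{1}{\sqrt{2\pi\alpha}}e^{-x^2/\alpha}$ on the relevant range $x\in[c+\tfrac12,c+1]$ (after the shift), and similarly handling $Y_2'-Y_2''$, which is Gaussian with variance $\leq \tfrac23(b-a)^3$, after rescaling by $\beta^{-1}$ to land on the $y\in[0,1]$ variable, produces exactly the claimed product of two integrals together with the prefactor $\frac{\sqrt3(T-\tau_2)}{\pi\sqrt{T^3\alpha}}$. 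The strict positivity is then clear since both integrands are positive on sets of positive measure.

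The main obstacle I anticipate is the careful bookkeeping in the middle step: identifying exactly which random variable to condition on, verifying that the argument of $\psi$ indeed falls in the linearization window $[c-2,c+2]$ with the required probability (this is presumably where the hypothesis $c\in[2,\infty)$ and the factor-of-two slack in the variance bounds from Lemma~\ref{lemma:BM:gen} are used), and tracking the Jacobian factors through the substitution $y = \beta^{-1}\cdot(\text{difference variable})/2$ so that the $\beta$-dependence cancels and one is left with the clean $e^{-6y^2/T^3}$ in the second integral. A secondary technical point is ensuring that the various measurability requirements for Lemma~\ref{lemma:P:gen} are met — in particular that the map $\omega\mapsto(B_t(\omega))_{t\in[a,b]}$ and its i.i.d.\ copies can genuinely be realized on a (possibly enlarged) probability space with the stated joint law with $X_1$ — which follows from Lemma~\ref{lemma:BM:gen}\eqref{item:BM:indep} but should be stated explicitly. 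Everything else is a sequence of routine Gaussian integral manipulations and applications of already-established lemmas.
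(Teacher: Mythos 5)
Your high-level architecture is the same as the paper's: reduce $\Xpsitwo_T$ to $z_T(\cos(\psi(Y_1+Y_2)))$ via Lemmas~\ref{lemma:X:iden:gen} and \ref{lemma:xtwo:z:gen}, apply Lemma~\ref{lemma:P:gen} with $X_1=\tilde W$, use Corollary~\ref{cor:z:diff} to pass from $z_T$ to $\cos\circ\psi$, then the product-to-sum identity $\cos v-\cos w=-2\sin(\tfrac{v-w}{2})\sin(\tfrac{v+w}{2})$ plus Lemma~\ref{lem:sin} and the variance bounds of Lemma~\ref{lemma:BM:gen}. However, there is a genuine gap in the choice of $X_2',X_2''$. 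You propose \emph{i.i.d.\ copies} $Y_2',Y_2''$ of $Y_2$, independent of $Y_1$. The paper instead takes $X_2'=Y_2$ and $X_2''=-Y_2$, exploiting the sign symmetry of the centered Gaussian $Y_2$ (see \eqref{eq:indep:gen}). This is not a cosmetic difference: with $X_2''=-Y_2$ one has, on the linearization window, $\tfrac{v+w}{2}=\psi(Y_1)$ (a function of $Y_1$ alone) and $\tfrac{v-w}{2}=Y_2/\sqrt\beta$ (a function of $Y_2$ alone), so after restricting $Y_1\in[c-1,c+1]$, $Y_2\in[0,1]$ the two sine factors decouple cleanly into $\mathcal{N}_{0,\sigma_1}$- and $\mathcal{N}_{0,\sigma_2}$-integrals, giving precisely the stated product with the prefactor $\frac{\sqrt3(T-\tau_2)}{\pi\sqrt{T^3\alpha}}$.

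With i.i.d.\ copies, $\tfrac{v+w}{2}$ becomes a function of $Y_1+\tfrac{Y_2'+Y_2''}{2}$ (variance $\sigma_1+\sigma_2/2$) and $\tfrac{v-w}{2}$ a function of $Y_2'-Y_2''$ (variance $2\sigma_2$). Although $Y_2'+Y_2''$ and $Y_2'-Y_2''$ are independent, the restriction needed to stay inside the window $[c-2,c+2]$ (say $Y_1\in[c-1,c+1]$, $Y_2',Y_2''\in[-1,1]$) does not factor through the rotated variables, and the resulting Gaussian variances no longer match $\sigma_1$ and $\sigma_2$, so Lemma~\ref{lemma:BM:gen}\eqref{item:BM:1}--\eqref{item:BM:2} do not give the exponents and normalizations $e^{-x^2/\alpha}$, $e^{-6y^2/T^3}$ that appear in the claim. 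Your remark about ``choosing the conditioning so that $\frac{p+q}{2}$ stays near an extremum'' does not resolve this: $Y_1$ is random and cannot be conditioned away while still integrating against the Gaussian density that produces the $\int_{c+1/2}^{c+1}e^{-x^2/\alpha}\,dx$ factor. In short, your plan would yield a lower bound of the same qualitative form but not the one stated; to obtain the exact constant you should replace the i.i.d.\ resampling by the sign flip $Y_2\mapsto -Y_2$, which also removes the need to enlarge the probability space.

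A minor additional remark: you should make explicit the step (present in the paper) that $Y_1=\Phi_1(\tilde W)$ for a measurable $\Phi_1$, so that $z_T(\cos(\psi(Y_1+Y_2)))$ genuinely has the form $\Phi(X_1,X_2)$ demanded by Lemma~\ref{lemma:P:gen}; this follows from \eqref{eq:lem:Y1} together with Item~\eqref{item:Ito} of Lemma~\ref{lemma:BM:gen}, but it is not automatic from the phrasing ``$Y_1$ is $\sigma(X_1)$-measurable.''
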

\begin{proof}[Proof of Lemma~\ref{lemma:lower:1:gen}]
Throughout this proof let $A \subseteq \R$ be the set given by 
\begin{align}
A = \big\{x \in [c-1,c+1] \colon \left|\sin(\psi(x))\right| \geq \tfrac{1}{2}\big\},
\end{align}
let  $\bar{W}, B \colon [a, b] \times \Omega \to \R$ and $\tilde{W} \colon ([0,a] \cup [b,T]) \times \Omega \to \R$ be the stochastic processes with continuous sample paths which satisfy for all $s \in [a,b]$, $t \in ([0,a]\cup [b,T])$ that 
\begin{align}
\bar{W}_s& = \frac{(s-a)}{(b-a)} \cdot W_b + \frac{(b-s)}{(b-a)} \cdot W_a, \quad B_s = W_s - \bar{W}_s, \quad \text{and} \quad  \tilde{W}_t= W_t,
\end{align}
 let $Y_1,  Y_2 \colon \Omega \to \R$ be random variables which satisfy 
\begin{align}
\label{eq:lem:Y1}
\P \Big(Y_1 = \smallint\nolimits_{0}^a f(s) \, dW_s + \smallint\nolimits_{b}^{\tau_1} f(s) \, dW_s + \smallint\nolimits_a^b f(s) \, d\bar{W}_s \Big) =1,
\end{align}
and
\begin{align}
\P \Big( Y_2 = \smallint\nolimits_a^b f(s)\, dW_s - \smallint\nolimits_a^b f(s)\, d\bar{W}_s \Big)=1,
\end{align} 
let $z \colon [\tau_1,T] \times \R  \to \R$ be a continuous function which satisfies for all $ t \in [\tau_1,T]$, $a \in \R$ that 
\begin{align}
\label{eq:z}
z_t(a) = \tau_1 + \int_{\tau_1}^t \big[ 1+ g(z_s(a))(a+1)\big] \, ds,
\end{align}
let $\sigma_1, \sigma_2, \varepsilon, \beta \in (0,\infty)$ be the real numbers given by 
\begin{align}
\sigma_1 = \E\big[|Y_1|^2\big], \quad \sigma_2 = \E\big[|Y_2|^2\big], \quad \varepsilon = b-a, \quad \text{and} \quad \beta = \frac{\varepsilon^3}{T^3},
\end{align}
and for every $x \in \R$, $y \in (0,\infty)$ let $\mathcal{N}_{x,y} \colon \mathcal{B}(\R) \to [0,\infty]$ be the function which satisfies for all $B \in \mathcal{B}(\R)$ that
\begin{align}
\mathcal{N}_{x,y}(B) = \int_{B} \frac{1}{\sqrt{2\pi y}} \,\, e^{-\frac{(r-x)^2}{2y}} \, dr.
\end{align}
Next note that Item~\eqref{item:tau1:X} in Lemma~\ref{lemma:X:iden:gen} proves that for all $t \in [\tau_1,T]$ it holds $\P$-a.s.~that 
\begin{align}
\begin{split}
\Xpsione_t  &= \int_0^{\tau_1}  f(s) \, dW_s = \int_0^a  f(s) \, dW_s + \int_b^{\tau_1}  f(s) \, dW_s + \int_a^b  f(s) \, dW_s\\
& = \left[ \int_0^a  f(s) \, dW_s + \int_b^{\tau_1}  f(s) \, dW_s  + \int_a^b f(s) \, d\bar{W}_s \right] \\
& \quad + \left[\int_a^b  f(s) \, dW_s - \int_a^b  f(s) \, d\bar{W}_s\right] = Y_1 + Y_2.
\end{split}
\end{align}
This together with Lemma~\ref{lemma:xtwo:z:gen} ensures that
\begin{align}
\label{eq:zT}
\P\Big(\Xpsitwo_T= z_T\big(\!\cos(\psi(Y_1 +Y_2))\big)\Big)=1.
\end{align}
 Moreover, observe that  Items~\eqref{item:Ito} and \eqref{item:Y2} of Lemma~\ref{lemma:BM:gen} show that for all $t_1, t_2 \in [0,T]$ with $t_1 \leq t_2$ it holds that 
 \begin{align}
 \label{eq:P:int}
\P \Big( \smallint\nolimits_{t_1}^{t_2} f(s) \, dW_s = f(t_2) W_{t_2} - f(t_1) W_{t_1} - \smallint\nolimits_{t_1}^{t_2} f'(s) W_s \, ds \Big) =1
 \end{align}
and
 \begin{align}
\P \Big( Y_2  = - \smallint\nolimits_{a}^{b} f'(s) B_s \, ds \Big)=1.
 \end{align}
Item~\eqref{item:BM:indep} in Lemma~\ref{lemma:BM:gen} therefore proves that
\begin{align}
\label{eq:Y2:indep}
Y_2 \qquad \text{and} \qquad \tilde{W}
\end{align}
are independent on $(\Omega, \mathcal{F}, \P)$. The fact that $Y_2$ is a Gaussian random variable with mean $0$ hence implies that
\begin{align}
\label{eq:indep:gen}
\P_{(\tilde{W}, Y_2)} = \P_{\tilde{W}} \otimes \P_{Y_2}= \P_{\tilde{W}} \otimes \P_{-Y_2}= \P_{(\tilde{W}, -Y_2)}.
\end{align}
Next observe that \eqref{eq:lem:Y1} and \eqref{eq:P:int} assure that there exists a measurable function $\Phi_1 \colon C([0,a] \cup [b,T],\R) \to \R$ such that 
\begin{align}
\label{eq:Y1}
\P\big(Y_1 = \Phi_1(\tilde{W})\big)=1.
\end{align}
This, Lemma~\ref{lemma:P:gen} (with $\Omega = \Omega$, $S_1= C([0,a] \cup [b,T],\R)$, $S_2 = \R$, $X_1= \tilde{W}$, $X_2 = Y_2$, $X_2'= Y_2$, $X_2''= -Y_2$, $\varphi = u$, and $\Phi = (C([0,a] \cup [b,T], \R) \times \R \ni (w,y) \mapsto  z_T(\cos(\psi(\Phi_1(w)+y))) \in \R)$ in the notation of Lemma~\ref{lemma:P:gen}), \eqref{eq:zT}, and \eqref{eq:indep:gen} show that
\begin{align}
\label{eq:Xtwo:u}
\begin{split}
&\E \! \left[\big| \Xpsitwo_T - u((W_s)_{s \in [0,a] \cup [b,T]})\big|\right] = \E \! \left[\big| z_T(\cos(\psi(Y_1 +Y_2))) - u(\tilde{W})\big|\right]\\
& = \E \! \left[\big| z_T(\cos(\psi(\Phi_1(\tilde{W}) +Y_2))) - u(\tilde{W})\big|\right] \\
& \geq \tfrac{1}{2} \, \E \! \left[ \big| z_T(\cos(\psi(\Phi_1(\tilde{W}) +Y_2)))  - z_T(\cos(\psi(\Phi_1(\tilde{W}) -Y_2))) \big| \right] \\
& = \tfrac{1}{2} \, \E \Big[ \big| z_T(\cos(\psi(Y_1 +Y_2)))  - z_T(\cos(\psi(Y_1 -Y_2))) \big|\Big].
\end{split}
\end{align}
Corollary~\ref{cor:z:diff} therefore ensures that
\begin{align}
\label{eq:mean:gen}
\begin{split}
&\E \! \left[\big| \Xpsitwo_T - u((W_s)_{s \in [0,a] \cup [b,T]})\big|\right] \\
&\geq 2 (T-\tau_2) \,\E \big[\! \left| \cos(\psi(Y_1 +Y_2))  - \cos(\psi(Y_1 -Y_2))\right|\!\big] .
\end{split}
\end{align}
Moreover, note that \eqref{eq:Y1} and \eqref{eq:Y2:indep} demonstrate that $Y_1$ and $Y_2$ are independent on $(\Omega, \mathcal{F}, \P)$. The fact that $Y_1$ and $Y_2$ are centered Gaussian  distributed random variables hence shows that
\begin{align}
\begin{split}
& \E \big[ \! \left|\cos(\psi(Y_1 +Y_2))  - \cos(\psi(Y_1 -Y_2))\right| \!\big]  \\
&= \int_{\R} \int_{\R} \left|\cos(\psi(x+y)) - \cos(\psi(x-y)) \right|  \mathcal{N}_{0,\sigma_1}(dx) \, \mathcal{N}_{0,\sigma_2}(dy)\\
& \geq \int_{[0,1]} \int_{[c-1,c+1]} \left|\cos(\psi(x+y)) - \cos(\psi(x-y)) \right|  \mathcal{N}_{0,\sigma_1}(dx) \, \mathcal{N}_{0,\sigma_2}(dy)\\
& =  \int_{[0,1]} \int_{[c-1,c+1]} \left|\cos\!\left( \tfrac{x+y-c}{\sqrt{\beta}} \right) - \cos\!\left(\tfrac{x-y-c}{\sqrt{\beta}}\right)\!\right|  \mathcal{N}_{0,\sigma_1}(dx) \, \mathcal{N}_{0,\sigma_2}(dy)\\
& =  \int_{[0,1]} \int_{[c-1,c+1]} \left|\cos\!\left(\psi(x) + \tfrac{y}{\sqrt{\beta}} \right) - \cos\!\left(\psi(x)-\tfrac{y}{\sqrt{\beta}}\right) \! \right|  \mathcal{N}_{0,\sigma_1}(dx) \, \mathcal{N}_{0,\sigma_2}(dy).
\end{split}
\end{align}
The fact that $ \forall \, v, w \in \R \colon \cos(v) - \cos(w) = -2 \sin\!\big(\frac{v-w}{2}\big) \sin\!\big(\frac{v+w}{2}\big)$ therefore assures that
\begin{align}
\label{eq:normal:gen}
\begin{split}
& \E \big[ \! \left|\cos(\psi(Y_1 +Y_2))  - \cos(\psi(Y_1 -Y_2))\right| \!\big] \\
& \geq  2 \int_{[0,1]} \int_{[c-1,c+1]} \left|\sin(\psi(x))\right| \left|\sin\!\left(\tfrac{y}{\sqrt{\beta}}\right)\!\right|  \mathcal{N}_{0,\sigma_1}(dx) \, \mathcal{N}_{0,\sigma_2}(dy)\\
& \geq 2 \int_{[0,1]} \int_{A} \left|\sin(\psi(x))\right| \left|\sin\!\left(\tfrac{y}{\sqrt{\beta}}\right)\!\right| \mathcal{N}_{0,\sigma_1}(dx) \, \mathcal{N}_{0,\sigma_2}(dy)\\
& \geq  \mathcal{N}_{0,\sigma_1}(A) \int_{[0,1]}  \left|\sin\!\left(\tfrac{y}{\sqrt{\beta}}\right)\!\right| \mathcal{N}_{0,\sigma_2}(dy).
\end{split}
\end{align}
In addition, observe that Item~\eqref{item:BM:2} in  Lemma~\ref{lemma:BM:gen} proves that
\begin{align}
 \frac{ T^3 \beta }{12} = \frac{\varepsilon^3}{12}  \leq \frac{(b-a)^3}{12} \leq \sigma_2 \leq \frac{(b-a)^3}{3} = \frac{\varepsilon^3}{3} = \frac{T^3 \beta}{3}.
\end{align}
This implies that 
\begin{align}
\label{eq:sin:gen}
&\int_{[0,1]} \left|\sin\!\left(\tfrac{y}{\sqrt{\beta}}\right)\!\right| \mathcal{N}_{0,\sigma_2}(dy) = \int_{[0,1]}  \left|\sin\!\left(\tfrac{y}{\sqrt{\beta}}\right)\!\right| \!\frac{1}{\sqrt{2\sigma_2 \pi}} \, e^{-\frac{y^2}{2 \sigma_2}}\,  dy  \nonumber  \\
& \geq \int_{[0, \sqrt{\beta}]}  \left|\sin\!\left(\tfrac{y}{\sqrt{\beta}}\right)\!\right|\!  \frac{\sqrt{3}}{\sqrt{2 T^3 \beta \pi}} \, e^{-\frac{y^2}{2 \sigma_2}}\,  dy \geq \int_{[0, \sqrt{\beta}]}  \left|\sin\!\left(\tfrac{y}{\sqrt{\beta}}\right)\!\right|\!  \frac{\sqrt{3}}{\sqrt{2 T^3 \beta \pi}} \, e^{-\frac{6 y^2}{T^3 \beta}}\,  dy \nonumber \\
&= \frac{\sqrt{3}}{\sqrt{2  T^3 \pi}}  \int_{[0,1]}  \left|\sin(y)\right|    e^{-\frac{6y^2}{T^3 }}\,  dy.
\end{align}
Moreover, Item~\eqref{item:BM:1} in Lemma~\ref{lemma:BM:gen} shows that
\begin{align}
\tfrac{\alpha}{2} \leq \sigma_1 \leq \alpha.
\end{align}
Lemma~\ref{lem:sin}  hence proves that
\begin{align}
\begin{split}
\mathcal{N}_{0,\sigma_1}(A) &= \int_A \frac{1}{\sqrt{2 \sigma_1 \pi}} \, e^{-\frac{x^2}{2\sigma_1}} \, dx \geq \int_A \frac{1}{\sqrt{2 \sigma_1 \pi}} \, e^{-\frac{x^2}{\alpha}} \, dx\\
& \geq \int_A \frac{1}{\sqrt{2 \alpha \pi}} \, e^{-\frac{x^2}{\alpha}} \, dx \geq \int_{[c+\nicefrac{1}{2}, c+1]} \frac{1}{\sqrt{2 \alpha \pi}} \, e^{-\frac{x^2}{\alpha}} \, dx.
\end{split}
\end{align}
Combining this with \eqref{eq:mean:gen}, \eqref{eq:normal:gen}, and \eqref{eq:sin:gen} yields that
\begin{align}
\begin{split}
&\E \! \left[\big| \Xpsitwo_T - u((W_s)_{s \in [0,a] \cup [b,T]})\big|\right] \\
&\geq 2(T-\tau_2) \, \mathcal{N}_{0,\sigma_1}(A) \left[ \int_{[0,1]} \left|\sin\!\left(\tfrac{y}{\sqrt{\beta}}\right)\!\right| \mathcal{N}_{0,\sigma_2}(dy) \right] \\
&\geq  2(T-\tau_2) \Bigg[ \frac{1}{\sqrt{2 \alpha \pi}} \int_{[c+\nicefrac{1}{2}, c+1]} e^{-\frac{x^2}{\alpha}} \, dx \Bigg] \!\left[ \frac{\sqrt{3}}{\sqrt{2  T^3 \pi}}  \int_{[0,1]}  \left|\sin(y)\right|     e^{-\frac{6y^2}{T^3 }}\,  dy\right] \\
& = \left[ 2(T-\tau_2) \cdot \frac{1}{\sqrt{2 \alpha \pi}} \cdot \frac{\sqrt{3}}{\sqrt{2  T^3 \pi}}   \right]\! \bigg[  \int_{[c+\nicefrac{1}{2}, c+1]} e^{-\frac{x^2}{\alpha}} \, dx \bigg] \!\left[  \int_{[0,1]}   \left|\sin(y)\right|    e^{-\frac{6y^2}{T^3 }}\,  dy\right] \\
& = \left[ \frac{(T-\tau_2)}{\sqrt{ \alpha \pi}} \cdot \frac{\sqrt{3}}{\sqrt{ T^3 \pi}}   \right]\! \bigg[  \int_{[c+\nicefrac{1}{2}, c+1]} e^{-\frac{x^2}{\alpha}} \, dx \bigg] \!\left[  \int_{[0,1]}   \left|\sin(y)\right|     e^{-\frac{6y^2}{T^3 }}\,  dy\right] \\
& = \frac{\sqrt{3}(T-\tau_2)}{\pi\sqrt{T^3 \alpha}}  \bigg[  \int_{[c+\nicefrac{1}{2}, c+1]} e^{-\frac{x^2}{\alpha}} \, dx \bigg] \!\left[  \int_{[0,1]}  \left|\sin(y)\right|     e^{-\frac{6y^2}{T^3 }}\,  dy\right]  > 0.
\end{split}
\end{align}
The proof of Lemma~\ref{lemma:lower:1:gen} is thus completed.
\end{proof}

\begin{lemma}
\label{lemma:lower:2:gen}
Assume the setting in Section~\ref{setting:gen} and let $a\in [0, \tau)$, $b \in (a,  \tau]$, $ \varepsilon \in (0, b-a]$, $c \in [2, \infty)$, $\psi \in  C^{\infty}(\R,\R)$ satisfy for all $x \in [c-2, c+2]$ that $\psi(x) = \frac{T^{3/2}}{\varepsilon^{3/2}} \cdot (x-c)$. Then 
\begin{align}
\begin{split}
&\inf_{\substack{u \colon C([0,a] \cup [b,T], \R) \to \R \\  \text{ measurable}}} \E \! \left[\big| \Xpsitwo_T - u((W_s)_{s \in [0,a] \cup [b,T]})\big|\right] \\
&\geq \frac{\sqrt{3}(T-\tau_2)}{\pi\sqrt{T^3 \alpha}} \! \left[  \int_{c+\nicefrac{1}{2}}^{c+1} e^{-\frac{x^2}{\alpha}} \, dx \right] \!\left[  \int_{0}^1  \left|\sin(y)\right|   e^{-\frac{6y^2}{T^3 }}\,  dy\right]
> 0.
\end{split}
\end{align}
\end{lemma}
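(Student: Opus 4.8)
The plan is to deduce Lemma~\ref{lemma:lower:2:gen} directly from Lemma~\ref{lemma:lower:1:gen} by shrinking the gap from $[a,b]$ to $[a,a+\varepsilon]$. Concretely, I would set $\hat b := a + \varepsilon$ and first check that $\hat b$ is an admissible right endpoint for Lemma~\ref{lemma:lower:1:gen}: since $0 < \varepsilon \le b-a$ and $b \le \tau$ we get $a < \hat b \le b \le \tau$, so $\hat b \in (a,\tau]$, while $a \in [0,\tau)$, $c \in [2,\infty)$ and $\psi \in C^\infty(\R,\R)$ are inherited verbatim. The crucial compatibility is that $(\hat b - a)^{3/2} = \varepsilon^{3/2}$, so the hypothesis $\psi(x) = \tfrac{T^{3/2}}{\varepsilon^{3/2}}(x-c)$ for $x \in [c-2,c+2]$ is literally the hypothesis of Lemma~\ref{lemma:lower:1:gen} with $b$ there replaced by $\hat b$.

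The main observation is that $\hat b \le b$ forces $[0,a] \cup [b,T] \subseteq [0,a] \cup [\hat b,T]$, so any approximation seeing only $(W_s)_{s \in [0,a]\cup[b,T]}$ is a special case of one seeing $(W_s)_{s \in [0,a]\cup[\hat b,T]}$. Precisely, the restriction map $C([0,a]\cup[\hat b,T],\R) \ni w \mapsto (w_s)_{s \in [0,a]\cup[b,T]} \in C([0,a]\cup[b,T],\R)$ is $1$-Lipschitz, hence continuous and Borel measurable, so each measurable $u \colon C([0,a]\cup[b,T],\R)\to\R$ factors as $u((W_s)_{s\in[0,a]\cup[b,T]}) = \hat u((W_s)_{s\in[0,a]\cup[\hat b,T]})$ $\P$-a.s.\ for a measurable $\hat u \colon C([0,a]\cup[\hat b,T],\R)\to\R$. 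Applying Lemma~\ref{lemma:lower:1:gen} to $\hat u$ (with $b$ replaced by $\hat b$) then yields, for every such $u$,
\[
\E\!\big[\big|\Xpsitwo_T - u((W_s)_{s\in[0,a]\cup[b,T]})\big|\big]
\;\ge\; \frac{\sqrt{3}\,(T-\tau_2)}{\pi\sqrt{T^3\alpha}}\Big[\textstyle\int_{c+\nicefrac12}^{c+1} e^{-x^2/\alpha}\,dx\Big]\Big[\textstyle\int_0^1 |\sin(y)|\, e^{-6y^2/T^3}\,dy\Big] \;>\; 0 .
\]
Since this lower bound does not depend on $u$, taking the infimum over all measurable $u \colon C([0,a]\cup[b,T],\R)\to\R$ finishes the argument.

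I expect no real obstacle here: the only points requiring (routine) care are the measurability of the restriction map and the passage from ``for every $u$'' to ``infimum over $u$'', both immediate. In other words, Lemma~\ref{lemma:lower:2:gen} is just the uniform-in-$u$ repackaging of Lemma~\ref{lemma:lower:1:gen}, together with the observation that one may decouple the magnifying-glass scale $\varepsilon$ from the actual gap length $b-a$ by simply discarding the Brownian information on $(a+\varepsilon,b)$; all the analytic work has already been carried out in Lemma~\ref{lemma:lower:1:gen}.
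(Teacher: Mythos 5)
Your proposal is correct and follows essentially the same route as the paper: the paper picks $a_1 \in [a,b)$ and $b_1 \in (a_1,b]$ with $b_1-a_1=\varepsilon$ and invokes Lemma~\ref{lemma:lower:1:gen} for the pair $(a_1,b_1)$, together with the monotonicity of the infimum under enlarging the observed index set; your choice $a_1=a$, $\hat b=a+\varepsilon$ is just a specific admissible instance of the same idea, with the measurability of the restriction map spelled out a bit more explicitly.
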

\begin{proof}[Proof of Lemma~\ref{lemma:lower:2:gen}]
Throughout this proof let $a_1 \in [a, b)$, $b_1 \in (a_1,b]$ be real numbers which satisfy that $(b_1-a_1) = \varepsilon$. Note that Lemma~\ref{lemma:lower:1:gen} proves that
\begin{align}
\begin{split}
&\inf_{\substack{u \colon C([0,a] \cup [b,T], \R) \to \R \\  \text{ measurable}}} \E \! \left[\big| \Xpsitwo_T - u((W_s)_{s \in [0,a] \cup [b,T]})\big|\right] \\
&\geq \inf_{\substack{u \colon C([0,a_1] \cup [b_1,T], \R) \to \R \\  \text{ measurable}}} \E \! \left[\big| \Xpsitwo_T - u((W_s)_{s \in [0,a_1] \cup [b_1,T]})\big|\right] \\
&\geq  \frac{\sqrt{3}(T-\tau_2)}{\pi\sqrt{T^3 \alpha}} \! \left[  \int_{c+\nicefrac{1}{2}}^{c+1} e^{-\frac{x^2}{\alpha}} \, dx \right] \!\left[  \int_{0}^1  \left|\sin(y)\right|   e^{-\frac{6y^2}{T^3 }}\,  dy\right]  > 0.
\end{split}
\end{align}
The proof of Lemma~\ref{lemma:lower:2:gen} is thus completed.
\end{proof}

The next result, Corollary~\ref{cor:lower:1:gen}, follows directly from Lemma~\ref{lemma:lower:2:gen}.

\begin{cor}
\label{cor:lower:1:gen}
Assume the setting in Section~\ref{setting:gen} and let  $ (\varepsilon_n)_{n \in \N} \subseteq (0, \tau]$, $\psi \in  C^{\infty}(\R,\R)$ satisfy for all $n \in \N$,  $x \in [5n-2, 5n+2]$ that  $\psi(x) = \frac{T^{3/2}}{|\varepsilon_n|^{3/2}} \cdot (x-5n)$. Then it holds for all $n \in \N$  that
\begin{align}
\begin{split}
&\inf_{\substack{a,b \in [0, \tau], \\
	b-a \geq \varepsilon_n }} \inf_{\substack{u \colon C([0,a] \cup [b,T], \R) \to \R \\  \text{ measurable}}}\E \! \left[\big| \Xpsitwo_T - u((W_s)_{s \in [0,a] \cup [b,T]})\big|\right] \\
&\geq \frac{\sqrt{3}(T-\tau_2)}{\pi\sqrt{T^3 \alpha}} \! \left[  \int_{5n+\nicefrac{1}{2}}^{5n+1} e^{-\frac{x^2}{\alpha}} \, dx \right] \!\left[  \int_{0}^1  \left|\sin(y)\right|   e^{-\frac{6y^2}{T^3 }}\,  dy\right] > 0.
\end{split}
\end{align}
\end{cor}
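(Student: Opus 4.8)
The plan is to obtain Corollary~\ref{cor:lower:1:gen} as a direct specialization of Lemma~\ref{lemma:lower:2:gen} with the choices $c = 5n$ and $\varepsilon = \varepsilon_n$. First I would fix $n \in \N$ and an arbitrary pair $a, b \in [0, \tau]$ with $b - a \geq \varepsilon_n$, and then check that these data satisfy the hypotheses of Lemma~\ref{lemma:lower:2:gen}. The relevant observations are: (i) since $\varepsilon_n > 0$ we have $a < b \leq \tau$, so that $a \in [0,\tau)$ and $b \in (a,\tau]$; (ii) $\varepsilon_n \in (0, b-a]$, since $\varepsilon_n > 0$ and $\varepsilon_n \leq b - a$ by assumption; (iii) $5n \geq 5 \geq 2$, so $c = 5n \in [2,\infty)$; and (iv) because $\varepsilon_n > 0$ we have $|\varepsilon_n|^{3/2} = \varepsilon_n^{3/2}$, so the hypothesis imposed on $\psi$ in the corollary gives exactly $\psi(x) = \frac{T^{3/2}}{\varepsilon_n^{3/2}} \cdot (x - 5n)$ for all $x \in [5n-2, 5n+2] = [c-2,c+2]$, which is precisely the affine condition on $\psi$ required by Lemma~\ref{lemma:lower:2:gen}.

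Applying Lemma~\ref{lemma:lower:2:gen} with these choices of $a$, $b$, $\varepsilon = \varepsilon_n$, $c = 5n$, and $\psi$ then yields
\begin{equation*}
\inf_{\substack{u \colon C([0,a] \cup [b,T], \R) \to \R \\ \text{measurable}}} \E\big[\big|\Xpsitwo_T - u((W_s)_{s \in [0,a] \cup [b,T]})\big|\big] \geq \frac{\sqrt{3}(T-\tau_2)}{\pi\sqrt{T^3\alpha}} \Big[\int_{5n+\nicefrac{1}{2}}^{5n+1} e^{-\frac{x^2}{\alpha}}\, dx\Big] \Big[\int_0^1 |\sin(y)|\, e^{-\frac{6y^2}{T^3}}\, dy\Big] > 0 .
\end{equation*}
The decisive point is that the right-hand side depends only on $n$ (through $c = 5n$) and is independent of the particular choice of $a$ and $b$. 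Since $a, b$ were arbitrary elements of $[0,\tau]$ with $b - a \geq \varepsilon_n$, I would then pass to the infimum over all such pairs $(a,b)$ on the left-hand side, which leaves the lower bound unchanged and establishes the claimed inequality; the strict positivity of the right-hand side is already contained in the conclusion of Lemma~\ref{lemma:lower:2:gen}.

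There is essentially no analytic obstacle in this argument: all of the quantitative work has been carried out in Lemma~\ref{lemma:lower:1:gen} and Lemma~\ref{lemma:lower:2:gen}. The only step requiring a small amount of attention is the bookkeeping verification that $a$ lies strictly below $\tau$, so that the hypothesis $a \in [0,\tau)$ of Lemma~\ref{lemma:lower:2:gen} holds; this is exactly the place where the strict positivity of $\varepsilon_n$ is used, and it reflects the fact, emphasized in the introduction, that the assumption $(\varepsilon_n)_{n\in\N} \subseteq (0,\tau]$ cannot be dropped.
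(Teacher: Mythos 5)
Your proposal is correct and is exactly the route the paper takes: the paper states that Corollary~\ref{cor:lower:1:gen} follows directly from Lemma~\ref{lemma:lower:2:gen}, and your proof simply spells out the routine verification (the choices $c=5n$, $\varepsilon=\varepsilon_n$, the hypotheses on $a,b$, and the fact that $|\varepsilon_n|=\varepsilon_n$) followed by passing to the infimum over $(a,b)$ using that the lower bound is independent of $a,b$.
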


\subsection{Asymptotic lower bounds for strong approximation errors for two-dimensional SDEs}

\begin{lemma}
\label{thm:lower:seq1}
Assume the setting in Section~\ref{setting:gen} and let $(\varepsilon_n)_{n \in \N} \subseteq (0,  \tau]$ and $(\delta_n)_{n \in \N} \subseteq \R$ be non-increasing sequences with $\limsup_{n \to \infty} \delta_n \leq  0$.  Then there exist a function $\psi \in C^{\infty}(\R,\R)$ and a natural number $n_0 \in \N$ such that for all $n \in   \N$ it holds that
\begin{align}
\inf_{\substack{a,b \in [0, \tau], \\
b-a \geq \varepsilon_n }} \inf_{\substack{u \colon C([0,a] \cup [b,T], \R) \to \R \\  \text{ measurable}}}\E \! \left[\big| \Xpsitwo_T - u((W_s)_{s \in [0,a] \cup [b,T]})\big|\right] > \mathbbm{1}_{[n_0, \infty)}(n)  \max\{\delta_n,0\}.
\end{align}
\end{lemma}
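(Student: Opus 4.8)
The plan is to refine the construction behind Corollary~\ref{cor:lower:1:gen} rather than to apply it directly. Applied directly it must fail, since the lower bound it supplies for the $n$-th error is essentially $\int_{5n+\nicefrac{1}{2}}^{5n+1} e^{-x^2/\alpha}\,dx$ times a fixed positive constant, which decays like a Gaussian tail, whereas $\max\{\delta_n,0\}$ may tend to $0$ arbitrarily slowly. The remedy is to group the indices into consecutive blocks and, within each block, to use a single magnification window placed at a \emph{fixed} location that does not depend on $n$, so that the lower bound is constant throughout the block; each block is then started far enough out in $\N$ that $\max\{\delta_n,0\}$ has already dropped below that block's (fixed, positive) lower bound. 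Since $(\varepsilon_n)_{n\in\N}$ is non-increasing, a window whose slope is tuned to the smallest $\varepsilon_n$ occurring in a block is admissible — in the sense of Lemma~\ref{lemma:lower:2:gen} — for every index in that block.

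Concretely, first I would note that, as $(\delta_n)_{n\in\N}$ is non-increasing with $\limsup_{n\to\infty}\delta_n\le0$, the sequence $(\max\{\delta_n,0\})_{n\in\N}$ is non-increasing and tends to $0$. For $k\in\N$ I would set
\[
\eta_k:=\frac{\sqrt{3}\,(T-\tau_2)}{\pi\sqrt{T^3\alpha}}\Bigl[\textstyle\int_{5k+\nicefrac{1}{2}}^{5k+1} e^{-x^2/\alpha}\,dx\Bigr]\Bigl[\textstyle\int_0^1 |\sin(y)|\,e^{-6y^2/T^3}\,dy\Bigr]\in(0,\infty),
\]
which is exactly the right-hand side that Lemma~\ref{lemma:lower:2:gen} produces when $c=5k$. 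Next I would choose recursively a strictly increasing sequence $(N_k)_{k\in\N}\subseteq\N$ with $N_1=1$ and with each $N_{k+1}>N_k$ large enough that $\max\{\delta_{N_{k+1}},0\}<\eta_{k+1}$ (possible because $\max\{\delta_n,0\}\to0$ and $\eta_{k+1}>0$), and I would put $\varepsilon^{(k)}:=\varepsilon_{N_{k+1}-1}\in(0,\tau]$. Crucially, $(N_k)_{k\in\N}$ and $(\varepsilon^{(k)})_{k\in\N}$ are fixed before $\psi$ enters the picture, so there is no circular dependence. Finally I would let $\psi\in C^\infty(\R,\R)$ be any function with $\psi(x)=\frac{T^{3/2}}{(\varepsilon^{(k)})^{3/2}}(x-5k)$ for all $k\in\N$ and $x\in[5k-2,5k+2]$; such a $\psi$ exists because the intervals $[5k-2,5k+2]$, $k\in\N$, are pairwise disjoint with gaps of positive length, so the prescribed affine pieces can be interpolated smoothly (this is the same kind of object already used in Corollary~\ref{cor:lower:1:gen}).

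The remaining verification is then bookkeeping. Fix $n\in\N$ and let $k\in\N$ be the unique index with $N_k\le n<N_{k+1}$. For every $a,b\in[0,\tau]$ with $b-a\ge\varepsilon_n$ one has $a\in[0,\tau)$, $b\in(a,\tau]$, and, by monotonicity of $(\varepsilon_n)_{n\in\N}$ together with $n\le N_{k+1}-1$, $\varepsilon^{(k)}=\varepsilon_{N_{k+1}-1}\le\varepsilon_n\le b-a$; since moreover $c:=5k\ge2$ and $\psi(x)=\frac{T^{3/2}}{(\varepsilon^{(k)})^{3/2}}(x-5k)$ for $x\in[5k-2,5k+2]$ by construction, Lemma~\ref{lemma:lower:2:gen} (with $\varepsilon=\varepsilon^{(k)}$ and $c=5k$) yields $\inf_{u}\E[|\Xpsitwo_T-u((W_s)_{s\in[0,a]\cup[b,T]})|]\ge\eta_k$. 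Taking the infimum over all such $a,b$ shows that the left-hand side of the claimed inequality is $\ge\eta_k>0$. Now put $n_0:=N_2$. If $n<n_0$ the right-hand side is $0<\eta_k$ and the claim holds; if $n\ge n_0=N_2$ then $k\ge2$, so $\max\{\delta_{N_k},0\}<\eta_k$ by construction, and, since $(\max\{\delta_m,0\})_{m\in\N}$ is non-increasing and $n\ge N_k$, the left-hand side is $\ge\eta_k>\max\{\delta_{N_k},0\}\ge\max\{\delta_n,0\}=\mathbbm{1}_{[n_0,\infty)}(n)\max\{\delta_n,0\}$, as required.

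I expect the only genuinely non-mechanical point to be the idea in the first paragraph: one must resist assigning a fresh — and, being pushed ever farther out, ever weaker — magnification window to each individual index, and instead amortize a single fixed window over an entire block of indices chosen deep enough in the tail of $(\delta_n)_{n\in\N}$. Once this is recognized, the construction is essentially forced, and the only mild subtleties are the order in which $(N_k)_{k\in\N}$, $(\varepsilon^{(k)})_{k\in\N}$, and $\psi$ are introduced (to avoid circularity) and the elementary existence of the $C^\infty$ interpolant $\psi$.
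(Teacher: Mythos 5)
Your proposal is correct and takes essentially the same approach as the paper: both partition $\N$ into consecutive blocks — you via a recursively chosen $(N_k)_{k\in\N}$, the paper via a strictly increasing $n\colon\N\to\N$ — assign to each block a single magnification window centered at $5k$ with slope tuned to the smallest $\varepsilon$-value occurring in that block, and exploit the monotonicity of $(\varepsilon_n)$ and $(\max\{\delta_n,0\})$ to get a block-uniform lower bound exceeding the target. The only cosmetic differences are that you invoke Lemma~\ref{lemma:lower:2:gen} directly where the paper goes through Corollary~\ref{cor:lower:1:gen}, and that you use half-open blocks $[N_k,N_{k+1})$ with $n_0=N_2$ where the paper uses overlapping closed blocks $[n(m),n(m+1)]$ with $n_0=n(1)$.
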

\begin{proof}[Proof of Lemma~\ref{thm:lower:seq1}]
Note that the assumption that $\limsup_{n \to \infty} \delta_n \leq  0$ ensures that $\limsup_{n \to \infty} \max\{\delta_n,0\} =  0$. This shows that there exists a strictly increasing function $n \colon \N \to \N$ which satisfies for all $m \in \N$ that 
\begin{align}
\label{eq:n(m)}
\frac{\sqrt{3}(T-\tau_2)}{\pi\sqrt{T^3 \alpha}} \! \left[  \int_{5m+\nicefrac{1}{2}}^{5m+1} e^{-\frac{x^2}{\alpha}} \, dx \right] \!\left[  \int_{0}^1  \left|\sin(y)\right|  e^{-\frac{6y^2}{T^3 }}\,  dy\right] > \max\{\delta_{n(m)},0\}.
\end{align}
Next let $\psi \in C^{\infty}(\R, \R)$ be a function which satisfies for all $m \in \N$,  $x \in [5m-2, 5m+2]$ that 
\begin{align}
\label{eq:psi:nm}
\psi(x) = \tfrac{T^{3/2}}{|\varepsilon_{n(m+1)}|^{3/2}} \cdot (x-5m).
\end{align}
Observe that Corollary~\ref{cor:lower:1:gen} (with $\varepsilon_m = \varepsilon_{n(m+1)}$ for $m \in \N$ in the notation of Corollary~\ref{cor:lower:1:gen}), \eqref{eq:psi:nm}, and \eqref{eq:n(m)} prove that for all $m \in \N$, $k \in [n(m),n(m+1)] \cap \N$ it holds that 
\begin{align}
\begin{split}
&\inf_{\substack{a,b \in [0, \tau], \\
b-a \geq \varepsilon_k }} \inf_{\substack{u \colon C([0,a] \cup [b,T], \R) \to \R \\  \text{ measurable}}}\E \! \left[\big| \Xpsitwo_T - u((W_s)_{s \in [0,a] \cup [b,T]})\big|\right] \\
& \geq \inf_{\substack{a,b \in [0, \tau], \\
b-a \geq \varepsilon_{n(m+1)} }} \inf_{\substack{u \colon C([0,a] \cup [b,2], \R) \to \R \\  \text{ measurable}}}\E \! \left[\big| \Xpsitwo_T - u((W_s)_{s \in [0,a] \cup [b,T]})\big|\right] \\ &                                                                                                                                                                                                                                                                                                                                                                                                                                                                                                                                                                                                                                                                                                                                                                                                                                                                                                                                                                                                                                                                                                                                                                                                                                                                                                                                                                                                                                                                                                                                                                                                                                                                                                                                                                                                                                                                                                                                                                                                                                                                                                                                                                                                                                                                                                                                                                                                                                                                                                                                                                                                                                                                                                                                                                                                                                                                                                                                                                                                                                                                                                                                                                                                                                                                                                                                                                                                                                                                                                                                                                                                                                                                                                                                                                                                                                                                                                                                                                                                                                                                                                                                                                                                                                                                                                                                                                                                                                                                                                                                                                                                                                                                                                                                                                                                                                                                                                                                                                                                                                                                                                                                                                                                                                                                                                                                                                                                                                                                                                                                                                                                                                                                                                                                                                                                                                                                                                                                                                                                                                                                                                                                                                                                                                                                                                                                                                                                                                                                                                                                                                                                                                                                                                                                                                                                                                                                                                                                                                                                                                                                                                                                                                                                                                                                                                                                                                                                                                                                                                                                                                                                                                                                                                                                                                                                                                                                                                                                                                                                                                                                                                                                                                                                                                                                                                                                                                                                                                                                                                                                                                                                                                                                                                                                                                                                                                                                                                                                                                                                                                                                                                                                                                                                                                                                                                                                                                                                                                                                                                                                                                                                                                                                                                                                                                                                                                                                                                                                                                                                                                                                                                                                                                                                                                                                                                                                                                                                                                                                                                       \geq \frac{\sqrt{3}(T-\tau_2)}{\pi\sqrt{T^3 \alpha}} \! \left[  \int_{5m+\nicefrac{1}{2}}^{5m+1} e^{-\frac{x^2}{\alpha}} \, dx \right] \!\left[  \int_{0}^1  \left|\sin(y)\right|   e^{-\frac{6y^2}{T^3 }}\,  dy\right]   > \max\{\delta_{n(m)},0\} \\
&\geq \max\{\delta_k,0\}.
\end{split}
\end{align}
This implies that for all $k \in [n(1),\infty) \cap \N$ it holds that 
\begin{align}
\label{eq:n(1)}
\inf_{\substack{a,b \in [0, \tau], \\
b-a \geq \varepsilon_k }} \inf_{\substack{u \colon C([0,a] \cup [b,T], \R) \to \R \\  \text{ measurable}}}\E \! \left[\big| \Xpsitwo_T - u((W_s)_{s \in [0,a] \cup [b,T]})\big|\right] > \max\{\delta_k,0\}.
\end{align}
The assumption that $(\varepsilon_n)_{n \in \N} \subseteq (0,  \infty)$ is non-increasing hence proves that for all $k \in [1,n(1)] \cap \N$ it holds that
\begin{align}
\label{eq:n(1):2}
\begin{split}
&\inf_{\substack{a,b \in [0, \tau], \\
b-a \geq \varepsilon_k }} \inf_{\substack{u \colon C([0,a] \cup [b,T], \R) \to \R \\  \text{ measurable}}}\E \! \left[\big| \Xpsitwo_T - u((W_s)_{s \in [0,a] \cup [b,T]})\big|\right] \\
&\geq \inf_{\substack{a,b \in [0, \tau], \\
b-a \geq \varepsilon_{n(1) }}} \inf_{\substack{u \colon C([0,a] \cup [b,T], \R) \to \R \\  \text{ measurable}}}\E \! \left[\big| \Xpsitwo_T - u((W_s)_{s \in [0,a] \cup [b,T]})\big|\right]  \\
&> \max\{\delta_{n(1)},0\} \geq 0.
\end{split}
\end{align}
Combining \eqref{eq:n(1)} and \eqref{eq:n(1):2}  completes the proof of Lemma~\ref{thm:lower:seq1}.
\end{proof}

In the next result, Lemma~\ref{lem:lower:seq1} below, we generalize the result of  Lemma~\ref{thm:lower:seq1} by removing the restriction  that the sequence $(\varepsilon_n)_{n \in \N} \subseteq (0,  \tau]$ appearing in Lemma~\ref{thm:lower:seq1} has to be non-increasing.

\begin{lemma}
\label{lem:lower:seq1}
Assume the setting in Section~\ref{setting:gen}, let $(\varepsilon_n)_{n \in \N} \subseteq (0,  \tau]$ be a sequence, and let $(\delta_n)_{n \in \N} \subseteq \R$ be a non-increasing sequence with $\limsup_{n \to \infty} \delta_n \leq  0$.  Then there exist a function $\psi \in C^{\infty}(\R,\R)$ and a natural number $n_0 \in \N$ such that for all $n \in  \N$ it holds that
\begin{align}
\inf_{\substack{a,b \in [0, \tau], \\
b-a \geq \varepsilon_n }} \inf_{\substack{u \colon C([0,a] \cup [b,T], \R) \to \R \\  \text{ measurable}}}\E \! \left[\big| \Xpsitwo_T - u((W_s)_{s \in [0,a] \cup [b,T]})\big|\right] > \mathbbm{1}_{[n_0, \infty)}(n) \max\{\delta_n,0\}.
\end{align}
\end{lemma}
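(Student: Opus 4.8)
The plan is to reduce Lemma~\ref{lem:lower:seq1} to the already-established Lemma~\ref{thm:lower:seq1} by replacing the given, possibly non-monotone, sequence $(\varepsilon_n)_{n \in \N}$ with its running minimum. Concretely, I would introduce the sequence $(\tilde\varepsilon_n)_{n \in \N}$ defined by $\tilde\varepsilon_n = \min\{\varepsilon_1, \ldots, \varepsilon_n\}$ for $n \in \N$. Since $\tilde\varepsilon_n$ is the minimum of finitely many elements of $(0,\tau]$, it satisfies $\tilde\varepsilon_n \in (0,\tau]$; moreover $(\tilde\varepsilon_n)_{n \in \N}$ is non-increasing by construction, and $\tilde\varepsilon_n \leq \varepsilon_n$ for every $n \in \N$.

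Next I would apply Lemma~\ref{thm:lower:seq1} to the non-increasing sequences $(\tilde\varepsilon_n)_{n \in \N} \subseteq (0,\tau]$ and $(\delta_n)_{n \in \N} \subseteq \R$ (the latter being non-increasing with $\limsup_{n \to \infty} \delta_n \leq 0$ by hypothesis). This provides a function $\psi \in C^{\infty}(\R,\R)$ and a natural number $n_0 \in \N$ such that for all $n \in \N$ it holds that
\[
\inf_{\substack{a,b \in [0,\tau], \\ b - a \geq \tilde\varepsilon_n}} \inf_{\substack{u \colon C([0,a]\cup[b,T],\R) \to \R \\ \text{measurable}}} \E\big[\big| \Xpsitwo_T - u((W_s)_{s \in [0,a]\cup[b,T]})\big|\big] > \mathbbm{1}_{[n_0,\infty)}(n) \, \max\{\delta_n,0\}.
\]

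Finally I would exploit the monotonicity of the infimum with respect to the constraint set. For every $n \in \N$, the inequality $\tilde\varepsilon_n \leq \varepsilon_n$ ensures that $\{(a,b) \colon a,b \in [0,\tau],\, b-a \geq \varepsilon_n\} \subseteq \{(a,b) \colon a,b \in [0,\tau],\, b-a \geq \tilde\varepsilon_n\}$, so that the infimum over the smaller feasible set is at least the infimum over the larger one, giving
\[
\inf_{\substack{a,b \in [0,\tau], \\ b - a \geq \varepsilon_n}} \inf_{\substack{u \colon C([0,a]\cup[b,T],\R) \to \R \\ \text{measurable}}} \E\big[\big| \Xpsitwo_T - u((W_s)_{s \in [0,a]\cup[b,T]})\big|\big] \geq \inf_{\substack{a,b \in [0,\tau], \\ b - a \geq \tilde\varepsilon_n}} \inf_{\substack{u \colon C([0,a]\cup[b,T],\R) \to \R \\ \text{measurable}}} \E\big[\big| \Xpsitwo_T - u((W_s)_{s \in [0,a]\cup[b,T]})\big|\big],
\]
which together with the displayed bound from Lemma~\ref{thm:lower:seq1} yields the assertion. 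There is essentially no genuine obstacle in this argument: the only point deserving a remark is that the running-minimum sequence still lies in $(0,\tau]$, which is automatic, so that Lemma~\ref{thm:lower:seq1} is indeed applicable; everything else is bookkeeping with the order of the three infima and the inclusion of constraint sets.
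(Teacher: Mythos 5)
Your proof is correct and follows essentially the same route as the paper's: both replace $(\varepsilon_n)_{n\in\N}$ by its running minimum $\tilde\varepsilon_n = \min\{\varepsilon_1,\ldots,\varepsilon_n\}$, invoke Lemma~\ref{thm:lower:seq1} for the resulting non-increasing sequence, and conclude by the inclusion of constraint sets $\{(a,b)\colon b-a\geq\varepsilon_n\}\subseteq\{(a,b)\colon b-a\geq\tilde\varepsilon_n\}$. No gaps.
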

\begin{proof}[Proof of Lemma~\ref{lem:lower:seq1}]
Throughout this proof let $(\tilde{\varepsilon}_n)_{n \in \N} \subseteq (0, \tau]$ be the sequence which satisfies for all $n \in \N$ that 
\begin{align}
\tilde{\varepsilon}_{n} = \min\{\varepsilon_{1}, \varepsilon_2, \ldots, \varepsilon_n\}.
\end{align}
This ensures that $(\tilde{\varepsilon}_n)_{n \in \N} \subseteq (0, \tau]$ is a non-increasing sequence. Lemma~\ref{thm:lower:seq1} (with $\varepsilon_n = \tilde{\varepsilon}_n$ and $\delta_n = \delta_n$ for $n \in \N$ in the notation of Lemma~\ref{thm:lower:seq1}) hence  proves that there exist a function $\psi \in C^{\infty}(\R,\R)$ and a natural number $n_0 \in \N$ such that for all $n \in \N$ it holds that
\begin{align}
\begin{split}
&\inf_{\substack{a,b \in [0, \tau], \\
b-a \geq \varepsilon_n }} \inf_{\substack{u \colon C([0,a] \cup [b,T], \R) \to \R \\  \text{ measurable}}}\E \! \left[\big| \Xpsitwo_T - u((W_s)_{s \in [0,a] \cup [b,T]})\big|\right] \\
&\geq \inf_{\substack{a,b \in [0, \tau], \\
b-a \geq \tilde{\varepsilon}_n }} \inf_{\substack{u \colon C([0,a] \cup [b,T], \R) \to \R \\  \text{ measurable}}}\E \! \left[\big| \Xpsitwo_T - u((W_s)_{s \in [0,a] \cup [b,T]})\big|\right] \\
&> \mathbbm{1}_{[n_0, \infty)}(n) \max\{\delta_n,0\}.
\end{split}
\end{align}
The proof of Lemma~\ref{lem:lower:seq1} is thus completed.
\end{proof}

The next result, Corollary~\ref{cor:lower:seq2} below,  generalizes the result of  Lemma~\ref{lem:lower:seq1} by eliminating the condition  that the sequence $(\delta_n)_{n \in \N} \subseteq \R$ appearing in Lemma~\ref{lem:lower:seq1}  has to be non-increasing.

\begin{cor}
\label{cor:lower:seq2}
Assume the setting in Section~\ref{setting:gen} and let $(\varepsilon_n)_{n \in \N} \subseteq (0,  \tau]$ and $(\delta_n)_{n \in \N} \subseteq \R$ be  sequences with $\limsup_{n \to \infty} \delta_n \leq  0$.  Then there exist a function $\psi \in C^{\infty}(\R,\R)$ and a natural number $n_0 \in \N$ such that for all $n \in  \N$ it holds that
\begin{align}
\inf_{\substack{a,b \in [0, \tau], \\
b-a \geq \varepsilon_n }} \inf_{\substack{u \colon C([0,a] \cup [b,T], \R) \to \R \\  \text{ measurable}}}\E \! \left[\big| \Xpsitwo_T - u((W_s)_{s \in [0,a] \cup [b,T]})\big|\right] > \mathbbm{1}_{[n_0, \infty)}(n) \max\{\delta_n,0\}.
\end{align}
\end{cor}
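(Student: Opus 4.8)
The plan is to deduce Corollary~\ref{cor:lower:seq2} from Lemma~\ref{lem:lower:seq1} by replacing the arbitrary sequence $(\delta_n)_{n\in\N}$ by a non-increasing sequence which dominates $(\delta_n)_{n\in\N}$ from some index on and still has non-positive $\limsup$. The point is that the conclusion constrains the infima only through the product $\mathbbm{1}_{[n_0,\infty)}(n)\max\{\delta_n,0\}$, and that each of the infima appearing in the assertion is automatically strictly positive (by Corollary~\ref{cor:lower:1:gen}, or, equivalently, by the very conclusion of Lemma~\ref{lem:lower:seq1}); hence the values of $\delta_n$ for small $n$ play no role and can be absorbed into the shift $n_0$.

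Concretely, first I would use the hypothesis $\limsup_{n\to\infty}\delta_n\le 0$ to fix $N\in\N$ with $\sup_{k\in[N,\infty)\cap\N}\delta_k<\infty$, and then define the sequence $(\tilde{\delta}_n)_{n\in\N}$ by $\tilde{\delta}_n=\sup_{k\in[\max\{n,N\},\infty)\cap\N}\delta_k$ for $n\in\N$. Next I would verify the three properties that make $(\tilde{\delta}_n)_{n\in\N}$ an admissible input for Lemma~\ref{lem:lower:seq1}: it is real valued, since it is bounded above by $\sup_{k\in[N,\infty)\cap\N}\delta_k<\infty$ and below by $\delta_N>-\infty$; it is non-increasing, since the suprema are taken over a non-increasing family of index sets; and it satisfies $\limsup_{n\to\infty}\tilde{\delta}_n=\lim_{n\to\infty}\sup_{k\in[n,\infty)\cap\N}\delta_k=\limsup_{n\to\infty}\delta_n\le 0$. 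Moreover, for every $n\in[N,\infty)\cap\N$ one has $\tilde{\delta}_n\ge\delta_n$ and therefore $\max\{\tilde{\delta}_n,0\}\ge\max\{\delta_n,0\}$.

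Finally I would apply Lemma~\ref{lem:lower:seq1} with $(\tilde{\delta}_n)_{n\in\N}$ in place of $(\delta_n)_{n\in\N}$ to obtain a function $\psi\in C^{\infty}(\R,\R)$ and a natural number $\tilde{n}_0\in\N$ such that, for every $n\in\N$, the infimum in the assertion is $>\mathbbm{1}_{[\tilde{n}_0,\infty)}(n)\max\{\tilde{\delta}_n,0\}\ge 0$; in particular this infimum is strictly positive for every $n\in\N$. Setting $n_0=\max\{\tilde{n}_0,N\}$ I would then distinguish two cases: for $n\in[n_0,\infty)\cap\N$ the infimum is $>\max\{\tilde{\delta}_n,0\}\ge\max\{\delta_n,0\}=\mathbbm{1}_{[n_0,\infty)}(n)\max\{\delta_n,0\}$, whereas for $n\in[1,n_0)\cap\N$ the infimum is strictly positive while $\mathbbm{1}_{[n_0,\infty)}(n)\max\{\delta_n,0\}=0$; hence the required inequality holds in both cases with this $\psi$ and this $n_0$. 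The argument is entirely soft; the only point that needs a little care is that $(\delta_n)_{n\in\N}$ need not be bounded above near the start, which is exactly why the supremum is truncated at the index $N$ and $N$ is subsequently folded into $n_0$ via the indicator function.
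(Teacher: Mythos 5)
Your proof is correct and takes the same route as the paper's: replace $(\delta_n)_{n\in\N}$ by a non-increasing running-supremum sequence $(\tilde{\delta}_n)_{n\in\N}$ and invoke Lemma~\ref{lem:lower:seq1}. The only difference is that the paper simply sets $\tilde{\delta}_n=\sup\{\delta_n,\delta_{n+1},\ldots\}$ for every $n\in\N$ and observes this is automatically finite (each $\delta_k$ is a real number, and $\limsup_{k\to\infty}\delta_k\le 0$ forces some tail supremum, hence every $\tilde{\delta}_n$, to be finite); consequently $\max\{\tilde{\delta}_n,0\}\ge\max\{\delta_n,0\}$ holds for every $n\in\N$ and the conclusion is immediate without adjusting $n_0$, so the truncation of the supremum at the index $N$ and the resulting case split around $n_0=\max\{\tilde{n}_0,N\}$ in your argument, while not incorrect, are unnecessary.
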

\begin{proof}[Proof of Corollary~\ref{cor:lower:seq2}]
Throughout this proof let $(\tilde{\delta}_n)_{n \in \N} \subseteq (-\infty, \infty]$ be the sequence of extended real numbers which satisfies for all $n \in \N$ that
\begin{align}
\label{eq:delta}
\tilde{\delta}_n = \sup\{ \delta_n, \delta_{n+1}, \delta_{n+2}, \ldots \}.
\end{align}
The assumption that $\limsup_{n \to \infty} \delta_n \leq  0$ hence ensures that $\forall \, n \in \N \colon \tilde{\delta}_n \in \R$, that
\begin{align}
\limsup_{n \to \infty} \tilde{\delta}_n = \lim_{n \to \infty} \tilde{\delta}_n = \limsup_{n \to \infty} \delta_n \leq  0,
\end{align}
and that $(\tilde{\delta}_n)_{n \in \N}$ is a non-increasing sequence. This allows us to apply Lemma~\ref{lem:lower:seq1} (with $\varepsilon_n = \varepsilon_n$ and $\delta_n = \tilde{\delta}_n$ for $n \in \N$ in the notation of Lemma~\ref{lem:lower:seq1}) to obtain that there exist a function $\psi \in C^{\infty}(\R,\R)$ and a natural number $n_0 \in \N$ such that for all $n \in  \N$ it holds that
\begin{align}
\begin{split}
&\inf_{\substack{a,b \in [0, \tau], \\
b-a \geq \varepsilon_n }} \inf_{\substack{u \colon C([0,a] \cup [b,T], \R) \to \R \\  \text{ measurable}}}\E \! \left[\big| \Xpsitwo_T - u((W_s)_{s \in [0,a] \cup [b,T]})\big|\right]  \\
&> \mathbbm{1}_{[n_0, \infty)}(n) \max\{\tilde{\delta}_n,0\} \geq \mathbbm{1}_{[n_0, \infty)}(n) \max\{\delta_n,0\}.
\end{split}
\end{align}
The proof of Corollary~\ref{cor:lower:seq2} is thus completed.
\end{proof}

\subsection{Non-asymptotic lower bounds for strong approximation errors for two-dimensional SDEs}
\label{sub:non:asymp}

\begin{lemma}
\label{lem:measurable}
Assume the setting in Section~\ref{setting:gen} and let $\psi \in C^{\infty}(\R,\R)$. Then there exists a measurable function $\Phi \colon C([0,T],\R) \to \R$ such that
\begin{align}
\P\Big(\Xpsitwo_T = \Phi\big((W_s)_{s \in [0,T]}\big)\Big)=1.
\end{align}
\end{lemma}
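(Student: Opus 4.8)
The plan is to concatenate the explicit descriptions of the solution process obtained in Lemmas~\ref{lemma:X:iden:gen}, \ref{lemma:xtwo:z:gen}, and \ref{lemma:BM:gen} so as to exhibit $\Xpsitwo_T$ as an (essentially continuous) functional of the full Brownian path $(W_s)_{s\in[0,T]}$. To this end I would first let $z\colon[\tau_1,T]\times\R\to\R$ be the continuous function satisfying $z_t(a)=\tau_1+\int_{\tau_1}^t[1+g(z_s(a))(a+1)]\,ds$ for all $t\in[\tau_1,T]$, $a\in\R$; such a $z$ exists, is unique, and is jointly continuous by standard ordinary differential equations theory, since for each fixed $a\in\R$ the map $\R\ni r\mapsto 1+g(r)(a+1)\in\R$ is locally Lipschitz continuous (as $g\in C^{\infty}(\R,\R)$) and globally bounded (as $\sup_{x\in\R}|g(x)|<\infty$), which rules out blow-up on the compact interval $[\tau_1,T]$.

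Next, applying Lemma~\ref{lemma:xtwo:z:gen} with $t=T$ yields $\P\big(\Xpsitwo_T=z_T(\cos(\psi(\Xpsione_{\tau_1})))\big)=1$, Item~\eqref{item:tau1:X} of Lemma~\ref{lemma:X:iden:gen} yields $\P\big(\Xpsione_{\tau_1}=\int_0^{\tau_1}f(s)\,dW_s\big)=1$, and It\^o's formula as recorded in Item~\eqref{item:Ito} of Lemma~\ref{lemma:BM:gen} (with $t_1=0$ and $t_2=\tau_1$), combined with $W_0=0$ and with $f(\tau_1)=0$ (which follows from the hypothesis $f([\tau_1,\infty))=\{0\}$), yields $\P\big(\int_0^{\tau_1}f(s)\,dW_s=-\int_0^{\tau_1}f'(s)\,W_s\,ds\big)=1$. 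Chaining these three almost-sure identities gives $\P\big(\Xpsitwo_T=z_T(\cos(\psi(-\int_0^{\tau_1}f'(s)\,W_s\,ds)))\big)=1$.

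It then remains to observe that the function $\Phi\colon C([0,T],\R)\to\R$ defined by $\Phi(w)=z_T(\cos(\psi(-\int_0^{\tau_1}f'(s)\,w(s)\,ds)))$ for $w\in C([0,T],\R)$ is measurable. This holds because the map $C([0,T],\R)\ni w\mapsto\int_0^{\tau_1}f'(s)\,w(s)\,ds\in\R$ is continuous with respect to the supremum norm (the continuous function $f'$ is bounded on the compact interval $[0,\tau_1]$) and because $z_T$, $\cos$, and $\psi$ are continuous, so that $\Phi$ is continuous and in particular Borel measurable. Since $\Phi((W_s)_{s\in[0,T]})$ then coincides $\P$-a.s.\ with $z_T(\cos(\psi(-\int_0^{\tau_1}f'(s)\,W_s\,ds)))$, the displayed identity from the previous paragraph completes the proof.

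No step is expected to be a serious obstacle, as the statement is essentially bookkeeping on top of the already-established representations. The only two points requiring a little care are invoking global existence, uniqueness, and joint continuity of the ODE flow $z$ (not proved in the excerpt, but entirely standard given the smoothness and boundedness of $g$), and verifying that the deterministic-integrand stochastic integral, which by It\^o's formula reduces to a pathwise Riemann integral, genuinely yields a measurable path functional after composition with the continuous maps $\psi$, $\cos$, and $z_T$.
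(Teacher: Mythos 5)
Your proof is correct and follows essentially the same route as the paper's: both apply Lemma~\ref{lemma:xtwo:z:gen} to reduce $\Xpsitwo_T$ to a function of $\Xpsione_{\tau_1}$, then use Item~\eqref{item:tau1:X} of Lemma~\ref{lemma:X:iden:gen} together with Item~\eqref{item:Ito} of Lemma~\ref{lemma:BM:gen} to rewrite $\Xpsione_{\tau_1}=\int_0^{\tau_1}f(s)\,dW_s$ as a pathwise Riemann integral. The only differences are cosmetic: you carry out the simplification $f(\tau_1)=0$ explicitly and spell out the joint continuity of the flow $z$ and the measurability of the resulting composition, whereas the paper leaves these routine points implicit.
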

\begin{proof}[Proof of Lemma~\ref{lem:measurable}]
Note that Lemma~\ref{lemma:xtwo:z:gen} proves that there exists a measurable function $\phi \colon \R \to  \R$ such that 
\begin{align}
\label{eq:F}
\P\Big(\Xpsitwo_T = \phi\big(\Xpsione_{\tau_1}\big)\Big)=1.
\end{align}
Moreover, Item~\eqref{item:tau1:X} in Lemma~\ref{lemma:X:iden:gen}   and  Item~\eqref{item:Ito} in Lemma~\ref{lemma:BM:gen} ensure that 
it holds $\P$-a.s.~that 
\begin{align}
\begin{split}
\Xpsione_{\tau_1} &=  \int_{0}^{\tau_1} f(s) \, dW_s = f(\tau_1) W_{\tau_1} - f(0) W_{0} - \int_{0}^{\tau_1} f'(s) W_s \, ds\\
& = f(\tau_1) W_{\tau_1} - \int_{0}^{\tau_1} f'(s) W_s \, ds.
\end{split}
\end{align}
Combining this with \eqref{eq:F} completes the proof of Lemma~\ref{lem:measurable}.
\end{proof}

\begin{cor}
\label{cor:gen:C}
Assume the setting in Section~\ref{setting:gen} and  let $(\varepsilon_n)_{n \in \N} \subseteq (0,  \tau]$ and $(\delta_n)_{n \in \N} \subseteq \R$ be sequences with $\limsup_{n \to \infty} \delta_n \leq  0$.  Then there exist a function $\psi \in C^{\infty}(\R,\R)$, a real number $c \in (0, \infty)$, a measurable function $\Phi \colon C([0,T],\R) \to \R$,   and a continuous $\mathbb{F}$-adapted  stochastic process $Z \colon [0,T] \times \Omega \to \R$ such that for all $n \in \N$, $t \in [0,T]$ it holds that
\begin{align}
\P\Big(Z_T = \Phi \big((W_s)_{s \in [0,T]}\big)\Big)=1,
\end{align}
\begin{align}
 \P \! \left( \Xpsione_t = \textstyle \int_0^t  f(\tfrac{Z_s}{c}) \, dW_s \displaystyle \right)=1,
\end{align} 
\begin{align}
 \P \! \left( Z_t = \textstyle\int_0^t c+   c\, g\big(\tfrac{Z_s}{c}\big) [\cos(\psi(\Xpsione_s)) +1] \, ds \displaystyle \right)=1,
\end{align}
and 
\begin{align}
\inf_{\substack{a,b \in [0, \tau], \\
b-a \geq \varepsilon_n }} \inf_{\substack{u \colon C([0,a] \cup [b,T], \R) \to \R \\  \text{ measurable}}}\E \Big[\!\left| Z_T - u((W_s)_{s \in [0,a] \cup [b,T]})\right|\!\Big] \geq \delta_n.
\end{align}
\end{cor}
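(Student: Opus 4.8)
The plan is to realise $Z$ as a deterministic rescaling $Z=c\,\Xpsitwo$ of the second component of the two-dimensional SDE from Section~\ref{setting:gen}, where the scaling factor $c\in[1,\infty)$ is chosen large enough to absorb the finitely many indices for which $\delta_n$ may fail to be dominated by the intrinsic approximation error of $\Xpsitwo_T$.

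\emph{Step 1 (choice of $\psi$, $c$, and $Z$).} First I would apply Corollary~\ref{cor:lower:seq2} to obtain a function $\psi\in C^{\infty}(\R,\R)$ and a natural number $n_0\in\N$ such that, writing
\[
\beta_n:=\inf_{\substack{a,b\in[0,\tau],\\ b-a\ge\varepsilon_n}}\ \inf_{\substack{u\colon C([0,a]\cup[b,T],\R)\to\R\\ \text{measurable}}}\E\big[\big|\Xpsitwo_T-u((W_s)_{s\in[0,a]\cup[b,T]})\big|\big],
\]
it holds for every $n\in\N$ that $\beta_n>\mathbbm{1}_{[n_0,\infty)}(n)\max\{\delta_n,0\}$. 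In particular each $\beta_n$ is a positive real number (finiteness follows from the $\P$-a.s.\ bound $0\le\Xpsitwo_T\le T(1+2\sup_{x\in\R}|g(x)|)$, positivity from Corollary~\ref{cor:lower:seq2}), and $\beta_n>\delta_n$ whenever $n\ge n_0$. I would then set
\[
c:=\max\!\big(\{1\}\cup\{\,\delta_k/\beta_k\colon k\in\N,\ k<n_0\,\}\big)\in[1,\infty)
\]
(a maximum over a finite set of reals, equal to $1$ when $n_0=1$) and define the continuous $\mathbb{F}$-adapted process $Z\colon[0,T]\times\Omega\to\R$ by $Z_t:=c\,\Xpsitwo_t$.

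\emph{Step 2 (verification of the three identities).} Since $Z_s/c=\Xpsitwo_s$ for all $s\in[0,T]$, the $\P$-a.s.\ relation $\Xpsione_t=\int_0^t f(\Xpsitwo_s)\,dW_s$ from Section~\ref{setting:gen} becomes $\Xpsione_t=\int_0^t f(Z_s/c)\,dW_s$ $\P$-a.s., and multiplying the $\P$-a.s.\ relation $\Xpsitwo_t=t+\int_0^t g(\Xpsitwo_s)[\cos(\psi(\Xpsione_s))+1]\,ds$ by $c$ gives $Z_t=\int_0^t\big(c+c\,g(Z_s/c)[\cos(\psi(\Xpsione_s))+1]\big)\,ds$ $\P$-a.s.; these are the second and third identities in the assertion. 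For the first one, Lemma~\ref{lem:measurable} supplies a measurable $\tilde\Phi\colon C([0,T],\R)\to\R$ with $\P(\Xpsitwo_T=\tilde\Phi((W_s)_{s\in[0,T]}))=1$, and then $\Phi:=c\,\tilde\Phi$ satisfies $\P(Z_T=\Phi((W_s)_{s\in[0,T]}))=1$.

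\emph{Step 3 (verification of the lower bound).} For fixed $a,b$ the substitution $v=u/c$ is a bijection of the set of measurable functions $C([0,a]\cup[b,T],\R)\to\R$ onto itself, and $|Z_T-u|=c\,|\Xpsitwo_T-v|$; since $c>0$ can be pulled through both infima, it follows for every $n\in\N$ that
\[
\inf_{\substack{a,b\in[0,\tau],\\ b-a\ge\varepsilon_n}}\ \inf_{\substack{u\colon C([0,a]\cup[b,T],\R)\to\R\\ \text{measurable}}}\E\big[\big|Z_T-u((W_s)_{s\in[0,a]\cup[b,T]})\big|\big]=c\,\beta_n.
\]
If $n\ge n_0$, then $c\,\beta_n\ge\beta_n>\delta_n$ because $c\ge1$ and $\beta_n>0$; if $n<n_0$, then $c\ge\delta_n/\beta_n$ together with $\beta_n>0$ yields $c\,\beta_n\ge\delta_n$. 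In either case $c\,\beta_n\ge\delta_n$, which is exactly the desired inequality. The only genuinely delicate point is Step~1: Corollary~\ref{cor:lower:seq2} only guarantees $\beta_n\ge\delta_n$ for $n\ge n_0$, so one has to use the strict positivity of the finitely many remaining values $\beta_1,\ldots,\beta_{n_0-1}$ to enlarge the scale $c$ accordingly; the remainder is a routine rescaling built on Lemma~\ref{lem:measurable} and on the identities of Section~\ref{setting:gen}.
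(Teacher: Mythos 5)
Your proposal is correct and follows essentially the same route as the paper: invoke Corollary~\ref{cor:lower:seq2} to get $\psi$ and $n_0$, define $Z=c\,\Xpsitwo$ with a scaling constant $c\ge 1$ large enough to cover the finitely many indices $n<n_0$, pull the constant through the infima, and appeal to Lemma~\ref{lem:measurable} for the measurable representation. The only cosmetic difference is that the paper takes $c=\max\{1,\max\{\delta_1,0\}/e_1,\ldots,\max\{\delta_{n_0},0\}/e_{n_0}\}$ while you take $c=\max(\{1\}\cup\{\delta_k/\beta_k\colon k<n_0\})$; since $\beta_k>0$ and any $\delta_k<0$ contributes a negative ratio that is absorbed by the $\{1\}$ term, both choices yield the same conclusion.
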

\begin{proof}[Proof of Corollary~\ref{cor:gen:C}]
First, note that Corollary~\ref{cor:lower:seq2} proves that there exist a function $\psi \in C^{\infty}(\R,\R)$ and a natural number $n_0 \in \N$ such that for all $n \in  \N$ it holds that
\begin{align}
\label{eq:n0}
\inf_{\substack{a,b \in [0, \tau], \\
b-a \geq \varepsilon_n }} \inf_{\substack{u \colon C([0,a] \cup [b,T], \R) \to \R \\  \text{ measurable}}}\E \! \left[\big| \Xpsitwo_T - u((W_s)_{s \in [0,a] \cup [b,T]})\big|\right] > \mathbbm{1}_{[n_0, \infty)}(n)  \max\{\delta_n,0\}.
\end{align}
Next let $(e_n)_{n \in \N} \subseteq (0,\infty)$ be the sequence which satisfies for all $n \in \N$  that 
\begin{align}
e_n = \inf_{\substack{a,b \in [0, \tau], \\
b-a \geq \varepsilon_n }} \inf_{\substack{u \colon C([0,a] \cup [b,T], \R) \to \R \\  \text{ measurable}}}\E \! \left[\big| \Xpsitwo_T - u((W_s)_{s \in [0,a] \cup [b,T]})\big|\right],
\end{align}
 let $c\in (0, \infty)$ be the real number given by 
 \begin{align}
 c= \max \!\left( \left\{ 1, \tfrac{\max\{\delta_1,0\}}{e_1}, \tfrac{\max\{\delta_2,0\}}{e_2}, \ldots, \tfrac{\max\{\delta_{n_0},0\}}{e_{n_0}} \right\}\right),
 \end{align}
and let $Z \colon [0,T] \times \Omega \to \R$ be the stochastic process which satisfies  for all $t \in [0,T]$ that $Z_t = c \Xpsitwo_t$. Note that for all $t \in [0,T]$ it holds that
\begin{align}
\label{eq:Xone:Z}
 \P \! \left( \Xpsione_t = \textstyle \int_0^t  f(\tfrac{Z_s}{c}) \, dW_s \displaystyle \right)=1
\end{align}
and 
\begin{align}
\label{eq:Xtwo:Z}
 \P \! \left( Z_t = \textstyle\int_0^t c+   c\, g\big(\tfrac{Z_s}{c}\big) [\cos(\psi(\Xpsione_s)) +1] \, ds \displaystyle \right)=1.
\end{align}
Next observe that Lemma~\ref{lem:measurable} and the fact that $Z_T = c \Xpsitwo_T$ prove that there exists a measurable function $\Phi \colon C([0,T],\R) \to \R$ such that
\begin{align}
\label{eq:Z:meas}
\P\Big(Z_T = \Phi \big((W_s)_{s \in [0,T]}\big)\Big)=1.
\end{align}
Moreover, note that \eqref{eq:n0} ensures that for all $n \in  \{n_0, n_0+1, \ldots\}$ it holds that
\begin{align}
\label{eq:c:en:1}
\begin{split}
c \cdot e_{n} & = \max \!\left( \left\{ 1, \tfrac{\max\{\delta_1,0\}}{e_1}, \tfrac{\max\{\delta_2,0\}}{e_2}, \ldots, \tfrac{\max\{\delta_{n_0},0\}}{e_{n_0}} \right\}\right) \cdot e_n\\& \geq e_n > \mathbbm{1}_{[n_0, \infty)}(n)  \max\{\delta_n,0\} =   \max\{\delta_n,0\}.
\end{split}
\end{align}
In addition, observe that for all $n \in \{1, 2, \ldots, n_0\}$ it holds that
\begin{align}
\label{eq:c:en:2}
\begin{split}
c \cdot e_{n} & = \max \!\left( \left\{ 1, \tfrac{\max\{\delta_1,0\}}{e_1}, \tfrac{\max\{\delta_2,0\}}{e_2}, \ldots, \tfrac{\max\{\delta_{n_0},0\}}{e_{n_0}} \right\}\right) \cdot e_n\\& \geq \tfrac{\max\{\delta_n,0\}}{e_n} \cdot e_n  =   \max\{\delta_n,0\}.
\end{split}
\end{align}
Combining \eqref{eq:c:en:1} and \eqref{eq:c:en:2} shows that for all $n \in \N$ it holds that $ c\cdot e_n \geq \max\{\delta_n,0\} \geq \delta_n$. Hence, we obtain that for all $n \in \N$ it holds that 
\begin{align}
\begin{split}
&\inf_{\substack{a,b \in [0, \tau], \\
b-a \geq \varepsilon_n }} \inf_{\substack{u \colon C([0,a] \cup [b,T], \R) \to \R \\  \text{ measurable}}}\E \Big[\!\left| Z_T - u((W_s)_{s \in [0,a] \cup [b,T]})\right|\!\Big] \\
&= \inf_{\substack{a,b \in [0, \tau], \\
b-a \geq \varepsilon_n }} \inf_{\substack{u \colon C([0,a] \cup [b,T], \R) \to \R \\  \text{ measurable}}}\E \! \left[\big| c \Xpsitwo_T - u((W_s)_{s \in [0,a] \cup [b,T]})\big|\right] \\
&= c \left(\inf_{\substack{a,b \in [0, \tau], \\
b-a \geq \varepsilon_n }} \inf_{\substack{u \colon C([0,a] \cup [b,T], \R) \to \R \\  \text{ measurable}}}\E \! \left[\big|  \Xpsitwo_T - \tfrac{1}{c} \cdot u((W_s)_{s \in [0,a] \cup [b,T]})\big|\right]  \right)\\
& = c \cdot e_n\geq \delta_n .
\end{split}
\end{align}
This and \eqref{eq:Xone:Z}--\eqref{eq:Z:meas} complete the proof of Corollary~\ref{cor:gen:C}.
\end{proof}

The next result, Lemma~\ref{lem:main}, follows  from Corollary~\ref{cor:gen:C} and from Corollary~\ref{cor:fg}.

\begin{lemma}
\label{lem:main}
Let $T \in (0, \infty)$, $\tau \in (0,T)$, $d \in \{2,3, \ldots\}$, $\xi \in \R^d$, $(\varepsilon_n)_{n \in \N} \subseteq (0, \tau]$, $(\delta_n)_{n \in \N} \subseteq \R$ satisfy $\limsup_{n \to \infty} \delta_n \leq 0$. Then there exist infinitely often differentiable and globally bounded functions $\mu, \sigma \colon \R^d \to \R^d$  and a measurable function  $\Phi \colon C([0,T],\R) \to \R$ such that for every probability space $(\Omega, \mathcal{F}, \P)$, every normal filtration $\mathbb{F}=(\mathbb{F}_t)_{t \in [0,T]}$ on $(\Omega, \mathcal{F}, \P)$, every standard $(\Omega, \mathcal{F}, \P, \mathbb{F})$-Brownian motion $W \colon  [0,T] \times \Omega \to \R$, every continuous $\mathbb{F}$-adapted stochastic process $X=(X^{(1)}, \ldots, X^{(d)}) \colon [0,T] \times \Omega \to \R^d$ with $\forall \, t \in [0,T] \colon \P(X_t = \xi + \int_0^t \mu(X_s) \, ds + \int_0^t \sigma(X_s) \, dW_s)=1$, and every $n \in \N$ it holds that 
\begin{align}
	\label{eq:meas}
\P\Big(X_T^{(1)}=  \Phi \big((W_s)_{s \in [0,T]}\big) \Big)=1
\end{align}  
and
\begin{align}
\label{eq:lem:main}
\begin{split}
\inf_{\substack{a,b \in [0, \tau], \\
b-a \geq \varepsilon_n }}   \inf_{\substack{u \colon C([0,a] \cup [b,T], \R)  \to \R \\  \text{ measurable}}}\E \! \left[\big| X_T^{(1)} - u((W_s)_{s \in [0,a] \cup [b,T]})\big|\right] \geq \delta_n.
\end{split}
\end{align}
\end{lemma}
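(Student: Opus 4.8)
The plan is to reduce the whole statement to Corollary~\ref{cor:gen:C}, which already performs all of the analytic work, and then to dispose of the two cosmetic generalizations it does not cover: passing from the two‑dimensional, zero‑initial‑value SDE of Section~\ref{setting:gen} to an arbitrary dimension $d\in\{2,3,\ldots\}$ and an arbitrary initial value $\xi\in\R^d$. First I would apply Corollary~\ref{cor:fg} with the given $T$ and $\tau$ to fix $\tau_1\in(\tau,T)$, $\tau_2\in(\tau_1,T)$ and $f,g\in C^{\infty}(\R,\R)$ with all the properties required in Section~\ref{setting:gen}, and I would set $\alpha=\int_0^{\tau_1}|f(s)|^2\,ds$, so that $\alpha\in[\tfrac{2\tau^3}{3},\infty)$. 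Invoking Corollary~\ref{cor:gen:C} with these data and the given sequences then yields a function $\psi\in C^{\infty}(\R,\R)$, a constant $c\in(0,\infty)$ and a measurable map which I will denote $\Phi_0\colon C([0,T],\R)\to\R$ (the $\Phi$ of that corollary). The point I would stress is that $\psi$, $c$ and $\Phi_0$ are purely deterministic: $\psi$ and the threshold $n_0$ come from Corollary~\ref{cor:lower:seq2}, the constant $c$ is built only from the error quantities $e_n$ attached to $\Xpsitwo_T$, and $\Phi_0$ comes from Lemma~\ref{lem:measurable}; all of these depend only on $T,\tau,\tau_1,\tau_2,\alpha,f,g,(\varepsilon_n)_{n\in\N},(\delta_n)_{n\in\N}$ and on the law of $(\Xpsitwo,W)$, which is itself determined by these data. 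Hence $\psi$, $c$ and $\Phi_0$ may be chosen once and for all, before any probability space is introduced.

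Next I would define the coefficients: for $x=(x_1,\ldots,x_d)\in\R^d$ set
\begin{align*}
\mu(x)=\Big(\,c+c\,g\big(\tfrac{x_1-\xi_1}{c}\big)\big[\cos\big(\psi(x_2-\xi_2)\big)+1\big]\,,\,0\,,\ldots,\,0\,\Big),\qquad
\sigma(x)=\Big(\,0\,,\,f\big(\tfrac{x_1-\xi_1}{c}\big)\,,\,0\,,\ldots,\,0\,\Big),
\end{align*}
and take the measurable function in the statement to be $\Phi\colon C([0,T],\R)\to\R$, $\Phi(w)=\xi_1+\Phi_0(w)$. Since $f,g,\psi,\cos$ are smooth, $\mu$ and $\sigma$ are smooth; since $f,g$ are globally bounded, $|\cos(\cdot)+1|\le 2$, and $c$ is a fixed constant, $\mu$ and $\sigma$ are globally bounded. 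Now let an arbitrary probability space, normal filtration, Brownian motion $W$ and continuous adapted solution $X=(X^{(1)},\ldots,X^{(d)})$ of the corresponding SDE with $X_0=\xi$ be given, and put $Y:=X-\xi$. Reading off the components, $Y^{(j)}\equiv 0$ for $j\ge 3$, while $\Xpsione:=Y^{(2)}$ and $\Xpsitwo:=\tfrac1c\,Y^{(1)}$ are continuous $\mathbb{F}$‑adapted processes which, $\P$‑a.s.\ for every $t\in[0,T]$, satisfy $\Xpsione_t=\int_0^t f(\Xpsitwo_s)\,dW_s$ and $\Xpsitwo_t=t+\int_0^t g(\Xpsitwo_s)[\cos(\psi(\Xpsione_s))+1]\,ds$; that is, the setting of Section~\ref{setting:gen} holds for this particular $\psi$, and $Y^{(1)}=c\,\Xpsitwo$ is precisely the process $Z$ of Corollary~\ref{cor:gen:C}.

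Finally I would read off the two claims. For \eqref{eq:meas}: Corollary~\ref{cor:gen:C} (whose relevant identity stems from Lemma~\ref{lem:measurable} and hence holds on any probability space) gives $\P(Z_T=\Phi_0((W_s)_{s\in[0,T]}))=1$, so that $X^{(1)}_T=\xi_1+Y^{(1)}_T=\xi_1+Z_T=\Phi((W_s)_{s\in[0,T]})$ $\P$‑a.s. For \eqref{eq:lem:main}: for any admissible $a,b$ and any measurable $u$, the map $u-\xi_1$ is again measurable and $\E[|X^{(1)}_T-u((W_s)_{s\in[0,a]\cup[b,T]})|]=\E[|Z_T-(u-\xi_1)((W_s)_{s\in[0,a]\cup[b,T]})|]$, so the infima over $u$ of the two sides coincide and the right‑hand one is $\ge\delta_n$ by Corollary~\ref{cor:gen:C}; this is exactly \eqref{eq:lem:main}. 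The only point I would treat with genuine care is the one flagged in the first paragraph, namely that $\psi$, $c$ and $\Phi_0$ delivered by Corollary~\ref{cor:gen:C} do not depend on the probability space on which it is (nominally) applied, so that a single choice of $\mu,\sigma,\Phi$ works uniformly over all probability spaces in Lemma~\ref{lem:main}; by contrast, the translation by $\xi$, the zero‑padding in dimensions $\ge 3$, and the affine invariance of the strong error are routine, and I would expect no further obstacle.
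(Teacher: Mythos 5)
Your proposal is correct and follows essentially the same route as the paper's proof: invoke Corollary~\ref{cor:fg} and Corollary~\ref{cor:gen:C} to obtain $f,g,\psi,c$ and a measurable map $\phi$ (your $\Phi_0$), build $\mu$ and $\sigma$ as the $\xi$-translation of the zero-padded two-dimensional coefficients, set $\Phi = \phi + \xi_1$, and then reduce to the two-dimensional, zero-initial-value case via $Y = X - \xi$, reading off \eqref{eq:meas} and \eqref{eq:lem:main} from the corresponding statements for $Z=c\,X^{\psi,(2)}$. Your explicit remark that $\psi$, $c$, and $\Phi_0$ must be chosen independently of the probability space correctly identifies the one genuinely non-routine point in passing from Corollary~\ref{cor:gen:C} (which is formulated on a fixed space) to the ``for every probability space'' quantifier of Lemma~\ref{lem:main}; the paper handles this implicitly by simply asserting existence of $f,g,\psi,c,\phi$ with the required universal property and citing the two corollaries, while you spell out that the error quantities $e_n$ and hence $c$ depend only on the law of $(X^{\psi,(2)},W)$.
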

\begin{proof}[Proof of Lemma~\ref{lem:main}]
Throughout this proof for all measurable spaces $(A, \mathcal{A})$ and $(B, \mathcal{B})$ let $\mathcal{M}(\mathcal{A}, \mathcal{B})$ be the set of all $\mathcal{A} \slash \mathcal{B}$-measurable functions from $A$ to $B$, 
let $f, g, \psi \in C^{\infty}(\R,\R)$, $c \in (0,\infty)$,  $\phi \in \mathcal{M}(\mathcal{B}(C([0,T],\R)) , \mathcal{B}(\R))$ satisfy  that for every probability space $(\Omega, \mathcal{F}, \P)$, every normal filtration $\mathbb{F}=(\mathbb{F}_t)_{t \in [0,T]}$ on $(\Omega, \mathcal{F}, \P)$, every standard $(\Omega, \mathcal{F}, \P, \mathbb{F})$-Brownian motion $W \colon  [0,T] \times \Omega \to \R$, every continuous $\mathbb{F}$-adapted stochastic process $X=(X^{(1)},  X^{(2)}) \colon [0,T] \times \Omega \to \R^2$ with $ \forall \, t \in [0,T] \colon \P  \big( X_t^{(1)} = \int_0^t c+   c\, g(\frac{X_s^{(1)}}{c}) [\cos(\psi(X_s^{(2)})) +1] \, ds \big)= \P \big(X_{t}^{(2)} = \int_0^t f(\frac{X_s^{(1)}}{c}) \, dW_s \big)=1$, and every $n \in \N$ it holds that 
\begin{align}
\label{eq:phi}
\P\Big(X_T^{(1)}=  \phi \big((W_s)_{s \in [0,T]}\big) \Big)=1,
\end{align}  
\begin{align}
\label{eq:inf}
\begin{split}
\inf_{\substack{a,b \in [0, \tau], \\
		b-a \geq \varepsilon_n }}   \inf_{\substack{u \colon C([0,a] \cup [b,T], \R)  \to \R \\  \text{ measurable}}}\E \! \left[\big| X_T^{(1)} - u((W_s)_{s \in [0,a] \cup [b,T]})\big|\right] \geq \delta_n,
\end{split}
\end{align}	
and $\sup_{x \in \R} (|f(x)|+ |g(x)|) < \infty$ (Corollary~\ref{cor:fg} and Corollary~\ref{cor:gen:C} assure that $f,g,\psi  \in C^{\infty}(\R,\R)$,  $c \in (0,\infty)$, $\phi \in \mathcal{M}(\mathcal{B}(C([0,T],\R)) , \mathcal{B}(\R))$ do indeed exist), let $P \colon \R^d \to \R$ be the function which satisfies for all $x = (x_1, \ldots, x_d) \in \R^d$ that $P(x)= x_1$, let $\Xi \in \R$ be the real number given by $\Xi = P(\xi)$,   
let $a, b, \mu, \sigma \colon \R^d \to \R^d$ be the functions which satisfy for all $x = (x_1, \ldots, x_d)\in \R^d$ that
\begin{align}
\label{eq:M}
a(x)=(c + c \, g(\tfrac{x_1}{c})[\cos(\psi(x_2))+1], 0, \ldots, 0),
\end{align}
\begin{align}
\label{eq:Sigma}
b(x)=(0,f(\tfrac{x_1}{c}), 0, \ldots, 0),
\end{align}
\begin{align}
\label{eq:mu}
\mu(x)=a(x-\xi), \qquad \text{and} \qquad \sigma(x)=b(x-\xi),
\end{align}
let $\Phi \colon C([0,T], \R) \to \R$ be the measurable function which satisfies for all $v \in C([0,T],\R)$ that 
\begin{align}
\label{eq:Phi}
\Phi(v) = \phi(v) + \Xi,
\end{align}
let $(\Omega, \mathcal{F}, \P)$ be a probability space, let  $\mathbb{F}=(\mathbb{F}_t)_{t \in [0,T]}$ be a normal filtration on $(\Omega, \mathcal{F}, \P)$,   let $W  \colon  [0,T] \times \Omega \to \R$ be a standard $(\Omega, \mathcal{F}, \P, \mathbb{F})$-Brownian motion, 
let $X=(X^{(1)}, \ldots, X^{(d)}) \colon [0,T] \times \Omega \to \R^d$ be a  continuous $\mathbb{F}$-adapted stochastic process which satisfies for all $t \in [0,T]$ that 
\begin{align}
\label{eq:X}
\P\Big(X_t = \xi + \smallint\nolimits_0^t \mu(X_s) \, ds + \smallint\nolimits_0^t \sigma(X_s) \, dW_s\Big)=1,
\end{align}
let  $n \in \N$, $a,b \in [0,\tau]$ be real numbers with $b-a \geq \varepsilon_n$, and let $Y =(Y^{(1)}, \ldots, Y^{(d)}) \colon [0,T] \times \Omega \to \R^d$ be the stochastic process which satisfies for all $t \in [0,T]$ that
\begin{align}
\label{eq:Y:xi}
Y_t = X_t - \xi.
\end{align}
Observe that \eqref{eq:X}, \eqref{eq:Y:xi}, \eqref{eq:mu}, and \eqref{eq:mu} ensure  that $Y  \colon [0,T] \times \Omega \to \R^d$ is a  continuous $\mathbb{F}$-adapted stochastic process    which satisfies for all $t \in [0,T]$ that
\begin{align}
\label{eq:Y}
\P\Big(Y_t =  \smallint\nolimits_0^t a(Y_s) \, ds + \smallint\nolimits_0^t b(Y_s) \, dW_s\Big)=1.
\end{align}
This, \eqref{eq:M}, and \eqref{eq:Sigma} show that for all $t \in [0,T]$ it holds that
\begin{align}
\P  \Big( Y_t^{(1)} = \smallint\nolimits_0^t c+   c\, g\big(\tfrac{Y_s^{(1)}}{c}\big) [\cos(\psi(Y_s^{(2)})) +1] \, ds \Big)=1
\end{align}
and 
\begin{align}
\P \Big(Y_{t}^{(2)} = \smallint\nolimits_0^t f\big(\tfrac{Y_s^{(1)}}{c}\big) \, dW_s \Big)=1.
\end{align}
Combining this with \eqref{eq:phi} and \eqref{eq:inf} demonstrates that 
\begin{align}
\label{eq:Y:meas}
\P\Big(Y_T^{(1)}=  \phi \big((W_s)_{s \in [0,T]}\big) \Big)=1
\end{align} 
and 
\begin{align}
\label{eq:inf:Y}
\begin{split}
\inf_{\substack{u \colon C([0,a] \cup [b,T], \R)  \to \R \\  \text{ measurable}}} \E \! \left[\big| Y_T^{(1)} - u((W_s)_{s \in [0,a] \cup [b,T]})\big|\right] \geq \delta_n.
\end{split}
\end{align}
In addition, observe that \eqref{eq:Y:xi}, \eqref{eq:Phi}, and  \eqref{eq:Y:meas} assure that 
\begin{align}
\label{eq:X:Phi}
\begin{split}
&\P\Big(X_T^{(1)}=  \Phi \big((W_s)_{s \in [0,T]}\big) \Big)= \P\Big(Y_T^{(1)} + \Xi=  \Phi \big((W_s)_{s \in [0,T]}\big) \Big)\\
&= \P\Big(Y_T^{(1)} =  \Phi \big((W_s)_{s \in [0,T]}\big) - \Xi \Big)= \P\Big(Y_T^{(1)} =  \phi \big((W_s)_{s \in [0,T]}\big)  \Big) =1.
\end{split}
\end{align}
Moreover, note that \eqref{eq:Y:xi} and \eqref{eq:inf:Y}   show that
\begin{align}
\label{eq:X:inf}
\begin{split}
&\inf_{\substack{u \colon C([0,a] \cup [b,T], \R)  \to \R \\  \text{ measurable}}}\E \! \left[\big| X_T^{(1)} - u((W_s)_{s \in [0,a] \cup [b,T]})\big|\right] \\
&= \inf_{\substack{u \colon C([0,a] \cup [b,T], \R)  \to \R \\  \text{ measurable}}}\E \! \left[\big| Y_T^{(1)}+ \Xi - u((W_s)_{s \in [0,a] \cup [b,T]})\big|\right]\\
& = \inf_{\substack{u \colon C([0,a] \cup [b,T], \R)  \to \R \\  \text{ measurable}}}\E \! \left[\big| Y_T^{(1)} - u((W_s)_{s \in [0,a] \cup [b,T]})\big|\right] \geq \delta_n.
\end{split}
\end{align}
Next observe that the fact that $f, g, \psi \in C^{\infty}(\R,\R)$, the fact that $\sup_{x \in \R} (|f(x)|+ |g(x)|) < \infty$, and \eqref{eq:M}--\eqref{eq:mu} ensure that $\mu, \sigma  \in C^{\infty}(\R^d, \R^d)$ and 
\begin{align}
\sup_{x \in \R^d} (\|\mu(x)\|_{\R^d} + \|\sigma(x)\|_{\R^d}) < \infty.
\end{align}
Combining this with \eqref{eq:X:Phi} and \eqref{eq:X:inf} completes the proof of Lemma~\ref{lem:main}.
\end{proof}

The next result, Theorem~\ref{thm:main} below,  extends the result of  Lemma~\ref{lem:main} by allowing the driving Brownian motion to be multidimensional.

\begin{theorem}
\label{thm:main}
Let $T \in (0, \infty)$, $\tau \in (0,T)$, $d \in \{2,3, \ldots\}$, $\xi \in \R^d$, $m\in \N$, $(\varepsilon_n)_{n \in \N} \subseteq (0, \tau]$, $(\delta_n)_{n \in \N} \subseteq \R$ satisfy $\limsup_{n \to \infty} \delta_n \leq 0$. Then there exist infinitely often differentiable and globally bounded functions $\mu \colon \R^d \to \R^d$ and $\sigma \colon \R^{d} \to \R^{d \times m}$ such that for every probability space $(\Omega, \mathcal{F}, \P)$, every normal filtration $\mathbb{F}=(\mathbb{F}_t)_{t \in [0,T]}$ on $(\Omega, \mathcal{F}, \P)$, every standard $(\Omega, \mathcal{F}, \P, \mathbb{F})$-Brownian motion $W \colon  [0,T] \times \Omega \to \R^m$, every continuous $\mathbb{F}$-adapted stochastic process $X=(X^{(1)}, \ldots, X^{(d)}) \colon [0,T] \times \Omega \to \R^d$ with $\forall \, t \in [0,T] \colon \P(X_t = \xi + \int_0^t \mu(X_s) \, ds + \int_0^t \sigma(X_s) \, dW_s)=1$, and every $n \in \N$ it holds that 
\begin{align}
\label{eq:prop:main}
\begin{split}
\inf_{\substack{a,b \in [0, \tau], \\
b-a \geq \varepsilon_n }}   \inf_{\substack{u \colon C([0,a] \cup [b,T], \R^m)  \to \R \\  \text{ measurable}}}\E \! \left[\big| X_T^{(1)} - u((W_s)_{s \in [0,a] \cup [b,T]})\big|\right] \geq \delta_n.
\end{split}
\end{align}
\end{theorem}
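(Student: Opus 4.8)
The plan is to deduce Theorem~\ref{thm:main} from the one-dimensional-noise case already handled in Lemma~\ref{lem:main} by using the first coordinate of the $\R^m$-valued Brownian motion as the driving noise and padding the diffusion matrix with zero columns. First I would apply Lemma~\ref{lem:main} (with $T$, $\tau$, $d$, $\xi$, $(\varepsilon_n)_{n\in\N}$, $(\delta_n)_{n\in\N}$ as in the statement of Theorem~\ref{thm:main}) to obtain infinitely often differentiable and globally bounded functions $\mu\colon\R^d\to\R^d$, $\varsigma\colon\R^d\to\R^d$, and a measurable function $\Phi\colon C([0,T],\R)\to\R$ satisfying \eqref{eq:meas} and \eqref{eq:lem:main}. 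I would then let $\sigma\colon\R^d\to\R^{d\times m}$ be the function whose first column is $\varsigma$ and whose remaining $m-1$ columns vanish identically; since $\varsigma\in C^\infty(\R^d,\R^d)$ is globally bounded, $\sigma\in C^\infty(\R^d,\R^{d\times m})$ is globally bounded, and $\sup_{x\in\R^d}(\|\mu(x)\|_{\R^d}+\|\sigma(x)\|_{\R^{d\times m}})<\infty$.

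Next, fix an arbitrary probability space $(\Omega,\F,\P)$, a normal filtration $\mathbb{F}$, a standard $(\Omega,\F,\P,\mathbb{F})$-Brownian motion $W=(W^{(1)},\dots,W^{(m)})\colon[0,T]\times\Omega\to\R^m$, and a continuous $\mathbb{F}$-adapted process $X=(X^{(1)},\dots,X^{(d)})$ with $\P(X_t=\xi+\int_0^t\mu(X_s)\,ds+\int_0^t\sigma(X_s)\,dW_s)=1$ for all $t\in[0,T]$. Because only the first column of $\sigma$ is nonzero, $\int_0^t\sigma(X_s)\,dW_s=\int_0^t\varsigma(X_s)\,dW^{(1)}_s$ $\P$-a.s., and $W^{(1)}$ is a standard one-dimensional $(\Omega,\F,\P,\mathbb{F})$-Brownian motion. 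Hence $X$ is a solution of the one-dimensional SDE driven by $W^{(1)}$, and Lemma~\ref{lem:main} applied to this situation yields that $\P(X_T^{(1)}=\Phi((W^{(1)}_s)_{s\in[0,T]}))=1$ and that, for every $n\in\N$ and all $a,b\in[0,\tau]$ with $b-a\geq\varepsilon_n$,
\[
\inf_{\substack{u\colon C([0,a]\cup[b,T],\R)\to\R\\ \text{measurable}}}\E\!\left[\big|X_T^{(1)}-u((W^{(1)}_s)_{s\in[0,a]\cup[b,T]})\big|\right]\geq\delta_n.
\]

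It remains to upgrade this to \eqref{eq:prop:main}, in which the approximating map $u$ is allowed to read the whole $\R^m$-valued restricted path $(W_s)_{s\in[0,a]\cup[b,T]}$, not just its first coordinate. Writing $\widehat W=(W^{(2)},\dots,W^{(m)})$ and using that the coordinate processes of a standard Brownian motion are independent together with the already established identity $X_T^{(1)}=\Phi((W^{(1)}_s)_{s\in[0,T]})$ $\P$-a.s., the pair consisting of $X_T^{(1)}$ and the path $(W^{(1)}_s)_{s\in[0,a]\cup[b,T]}$ is independent of $(\widehat W_s)_{s\in[0,a]\cup[b,T]}$. For a fixed measurable $u\colon C([0,a]\cup[b,T],\R^m)\to\R$, conditioning on $(\widehat W_s)_{s\in[0,a]\cup[b,T]}$ and invoking the freezing lemma (disintegration for independent random variables) gives, for $\P$-almost every realization $\widehat w$ of $(\widehat W_s)_{s\in[0,a]\cup[b,T]}$,
\[
\E\!\left[\big|X_T^{(1)}-u((W_s)_{s\in[0,a]\cup[b,T]})\big|\,\Big|\,(\widehat W_s)_{s\in[0,a]\cup[b,T]}=\widehat w\right]=\E\!\left[\big|X_T^{(1)}-u_{\widehat w}((W^{(1)}_s)_{s\in[0,a]\cup[b,T]})\big|\right],
\]
where $u_{\widehat w}\colon C([0,a]\cup[b,T],\R)\to\R$ is the measurable map obtained from $u$ by inserting $\widehat w$ into the last $m-1$ coordinates. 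By the displayed one-dimensional bound the right-hand side is at least $\delta_n$; taking the expectation over $\widehat w$ and then the infimum over $u$, $a$, and $b$ establishes \eqref{eq:prop:main}. I expect the main (mild) obstacle to be the measurability bookkeeping in this last step --- checking that $\widehat w\mapsto u_{\widehat w}$ is a genuine family of measurable maps and that the conditional-expectation identity above is valid --- while everything else (the reduction to $W^{(1)}$ and the padding of $\sigma$) is routine.
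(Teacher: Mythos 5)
Your proposal is correct and follows essentially the same route as the paper's proof of Theorem~\ref{thm:main}: pad the diffusion coefficient from Lemma~\ref{lem:main} with $m-1$ zero columns so that $\int_0^t\sigma(X_s)\,dW_s=\int_0^t\varsigma(X_s)\,dW^{(1)}_s$, observe via Lemma~\ref{lem:main} that $X_T^{(1)}$ is $\P$-a.s.\ a measurable function of $W^{(1)}$ alone, and then for any candidate $u$ reduce to the one-dimensional bound by freezing the extra coordinates $W^{(2)},\ldots,W^{(m)}$. The only cosmetic difference is that the paper phrases this last step as a Fubini/Tonelli disintegration over the product of the coordinate path measures (push-forward laws of the independent $\tilde W^{(j)}$) rather than as conditional expectation via the freezing lemma, but these are the same argument.
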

\begin{proof}[Proof of Theorem~\ref{thm:main}]
Throughout this proof assume w.l.o.g.~that 	$m \geq 2$ (otherwise \eqref{eq:prop:main} follows from Lemma \ref{lem:main}), let $\Phi \colon C([0,T],\R) \to \R$ and $\mu, \Sigma \in C^{\infty}(\R^d,\R^d)$   be measurable functions which satisfy that for every probability space $(\Omega, \mathcal{F}, \P)$, every normal filtration $\mathbb{F}=(\mathbb{F}_t)_{t \in [0,T]}$ on $(\Omega, \mathcal{F}, \P)$, every standard $(\Omega, \mathcal{F}, \P, \mathbb{F})$-Brownian motion $W \colon  [0,T] \times \Omega \to \R$, every continuous $\mathbb{F}$-adapted stochastic process $X=(X^{(1)}, \ldots, X^{(d)}) \colon [0,T] \times \Omega \to \R^d$ with $\forall \, t \in [0,T] \colon \P(X_t = \xi + \int_0^t \mu(X_s) \, ds + \int_0^t \Sigma(X_s) \, dW_s)=1$, and every $n \in \N$ it holds that 
\begin{align}
\label{eq:X1:meas}
\P\Big(X_T^{(1)}=  \Phi \big((W_s)_{s \in [0,T]}\big) \Big)=1,
\end{align}  
\begin{align}
\label{eq:m:1}
\begin{split}
\inf_{\substack{a,b \in [0, \tau], \\
b-a \geq \varepsilon_n }}   \inf_{\substack{u \colon C([0,a] \cup [b,T], \R)  \to \R \\  \text{ measurable}}}\E \! \left[\big| X_T^{(1)} - u((W_s)_{s \in [0,a] \cup [b,T]})\big|\right] \geq \delta_n,
\end{split}
\end{align}
and $\sup_{x \in \R^d} (\|\mu(x)\|_{\R^d}+ \|\Sigma(x)\|_{\R^d}) < \infty$ 
(Lemma~\ref{lem:main} assures that such functions do indeed exist), let $\sigma \colon \R^d \to \R^{d \times m}$ be the function which satisfies for all $x \in \R^d$, $y= (y_1, \ldots, y_m) \in \R^m$ that 
\begin{align}
\label{eq:sigma}
\sigma(x) y= y_1 \Sigma(x),
\end{align}
let $(\Omega, \mathcal{F}, \P)$ be a probability space, let  $\mathbb{F}=(\mathbb{F}_t)_{t \in [0,T]}$ be a normal filtration on $(\Omega, \mathcal{F}, \P)$,   let $W = (W^{(1)}, \ldots, W^{(m)}) \colon$  $[0,T] \times \Omega \to \R^m$ be a standard $(\Omega, \mathcal{F}, \P, \mathbb{F})$-Brownian motion, 
let $X=(X^{(1)}, \ldots, X^{(d)}) \colon [0,T] \times \Omega \to \R^d$ be a  continuous $\mathbb{F}$-adapted stochastic process which satisfies for all $t \in [0,T]$ that 
\begin{align}
\label{eq:X:Sigma}
\P\Big(X_t = \xi + \smallint\nolimits_0^t \mu(X_s) \, ds + \smallint\nolimits_0^t \sigma(X_s) \, dW_s\Big)=1,
\end{align}
let  $n \in \N$, $a,b \in [0,\tau]$ be real numbers with $b-a \geq \varepsilon_n$, let $u \colon C([0,a] \cup [b,T], \R^m)  \to \R$ be a measurable function, let $\tilde{W} = (\tilde{W}^{(1)}, \ldots, \tilde{W}^{(m)})\colon  \Omega \to C([0,T],\R^m)$ be the function which satisfies for all $\omega \in \Omega$, $t \in [0,T]$ that
\begin{align}
\label{eq:tilde:W}
(\tilde{W}(\omega))(t) = W_t(\omega),
\end{align}	
let $\Psi \colon C([0,T], \R ) \to C([0,a]\cup [b,T], \R)$  be the  function which satisfies for all $f \in C([0,T],\R)$ that $\Psi(f) = f|_{[0,a]\cup[b,T]}$, and for every $(v_1, \ldots, v_{m-1}) \in C([0,a] \cup[b,T], \R^{m-1})$ let $\tilde{u}_{v_1,\ldots,v_{m-1}} \colon C([0,a]\cup[b,T], \R) \to \R$ be the function which satisfies for all $v \in C([0,a]\cup[b,T], \R)$ that 
\begin{align}
\tilde{u}_{v_1,\ldots,v_{m-1}} (v)= u (v,v_1,\ldots,v_{m-1}).
\end{align}	
Observe that \eqref{eq:X:Sigma} and \eqref{eq:sigma} demonstrate that for all $t \in [0,T]$ it holds that
\begin{align}
\label{eq:X:sigma}
\P\Big(X_t = \xi + \textstyle \int_0^t \mu(X_s) \, ds + \int_0^t \Sigma(X_s) \, dW_s^{(1)}\Big)=1.
\end{align}
This,  \eqref{eq:X1:meas}, and \eqref{eq:tilde:W} assure that
\begin{align}
\label{eq:XT:meas}
\P\Big(X_T^{(1)}=  \Phi \big((W_s^{(1)})_{s \in [0,T]}\big)  = \Phi \big(\tilde{W}^{(1)}\big)\Big)=1.
\end{align} 
Next note that the fact that $\Sigma \in C^{\infty}(\R^d, \R^d)$, the fact that $\sup_{x \in \R^d} \|\Sigma(x)\|_{\R^{d}} < \infty$,  and \eqref{eq:sigma}  yield that  $\sigma \in C^{\infty} (\R^d, \R^{d \times m})$  and 
\begin{align}
\label{eq:sigma:norm}
\sup_{x \in \R^d} \|\sigma(x)\|_{\R^{d\times m}} = \sup_{x \in \R^d} \|\Sigma(x)\|_{\R^{d}} < \infty.
\end{align}
In addition, observe that 
\begin{align}
\begin{split}
u((W_s)_{s \in [0,a] \cup [b,T]}) &= u\big(\Psi(\tilde{W}^{(1)}), \ldots, \Psi(\tilde{W}^{(m)})\big)\\ 
&= \tilde{u}_{\Psi(\tilde{W}^{(2)}), \ldots, \Psi(\tilde{W}^{(m)} )} \big(\Psi(\tilde{W}^{(1)})\big).
\end{split}
\end{align}
Combining this with \eqref{eq:XT:meas} shows that 
\begin{align}
\begin{split}
&\E \! \left[\big| X_T^{(1)} - u((W_s)_{s \in [0,a] \cup [b,T]})\big|\right] = \E \! \left[\big| \Phi(\tilde{W}^{(1)}) - u((W_s)_{s \in [0,a] \cup [b,T]})\big|\right]\\
&  = \E \! \left[\big| \Phi(\tilde{W}^{(1)}) -\tilde{u}_{\Psi(\tilde{W}^{(2)}), \ldots, \Psi(\tilde{W}^{(m)} )} \big(\Psi(\tilde{W}^{(1)})\big)\big|\right] \\
& =\int_{\Omega} \big| \Phi(\tilde{W}^{(1)}(\omega)) -\tilde{u}_{\Psi(\tilde{W}^{(2)}(\omega)), \ldots, \Psi(\tilde{W}^{(m)}(\omega) )} \big(\Psi(\tilde{W}^{(1)}(\omega))\big)\big| \, \P(d\omega)\\
& = \int_{C([0,T],\R)} \ldots \int_{C([0,T],\R)} \big| \Phi(w_1) -\tilde{u}_{\Psi(w_2), \ldots, \Psi(w_m )} \big(\Psi(w_1)\big)\big| \\
&\quad \quad  \tilde{W}^{(1)}(\P)_{\mathcal{B}(C([0,T],\R))}(dw_1) \ldots \tilde{W}^{(m)}(\P)_{\mathcal{B}(C([0,T],\R))}(dw_m) \\
& = \int_{C([0,T],\R)} \ldots \int_{C([0,T],\R)}  \E \! \left[\big| \Phi(\tilde{W}^{(1)}) -\tilde{u}_{\Psi(w_2), \ldots, \Psi(w_m)} \big(\Psi(\tilde{W}^{(1)})\big)\big|\right] \\
&\quad \quad  \tilde{W}^{(2)}(\P)_{\mathcal{B}(C([0,T],\R))}(dw_2) \ldots \tilde{W}^{(m)}(\P)_{\mathcal{B}(C([0,T],\R))}(dw_m).
\end{split}
\end{align}
This, \eqref{eq:XT:meas}, \eqref{eq:X:sigma},  and \eqref{eq:m:1} ensure that 
\begin{align}
&\E \! \left[\big| X_T^{(1)} - u((W_s)_{s \in [0,a] \cup [b,T]})\big|\right]  \nonumber \\ 
& \geq \int_{C([0,T],\R)} \ldots \int_{C([0,T],\R)} \left[ \inf_{\substack{v \colon C([0,a] \cup [b,T], \R)  \to \R \\  \text{ measurable}}}  \E \! \left[\big| \Phi(\tilde{W}^{(1)}) -v \big(\Psi(\tilde{W}^{(1)})\big)\big|\right] \right] \nonumber\\
&\quad \quad  \tilde{W}^{(2)}(\P)_{\mathcal{B}(C([0,T],\R))}(dw_2) \ldots \tilde{W}^{(m)}(\P)_{\mathcal{B}(C([0,T],\R))}(dw_m) \nonumber \\
& = \inf_{\substack{v \colon C([0,a] \cup [b,T], \R)  \to \R \\  \text{ measurable}}}  \E \! \left[\big| \Phi(\tilde{W}^{(1)}) -v \big(\Psi(\tilde{W}^{(1)})\big)\big|\right] \nonumber  \\
& = \inf_{\substack{v \colon C([0,a] \cup [b,T], \R)  \to \R \\  \text{ measurable}}} \E \! \left[\big| X_T^{(1)} - v((W_s^{(1)})_{s \in [0,a] \cup [b,T]})\big|\right] \geq \delta_n.
\end{align}
Combining this with  \eqref{eq:sigma:norm} completes the proof of Theorem~\ref{thm:main}.  
\end{proof}
Next we strengthen the result of Theorem~\ref{thm:main} to strong approximations which may additionally use finitely many evaluations of the Brownian path.

\begin{cor}
\label{cor:W}
Let $T \in (0, \infty)$, $\tau \in (0,T)$, $d \in \{2,3, \ldots\}$, $\xi \in \R^d$, $m \in \N$,  $(\varepsilon_n)_{n \in \N} \subseteq (0, \tau]$, $(\delta_n)_{n \in \N} \subseteq \R$ satisfy $\limsup_{n \to \infty} \delta_n \leq 0$. Then there exist infinitely often differentiable and globally bounded functions $\mu \colon \R^d \to \R^d$ and $\sigma \colon \R^{d} \to \R^{d \times m}$ such that for every probability space $(\Omega, \mathcal{F}, \P)$, every normal filtration $\mathbb{F}=(\mathbb{F}_t)_{t \in [0,T]}$ on $(\Omega, \mathcal{F}, \P)$, every standard $(\Omega, \mathcal{F}, \P, \mathbb{F})$-Brownian motion $W \colon  [0,T] \times \Omega \to \R^m$, every continuous $\mathbb{F}$-adapted stochastic process $X=(X^{(1)}, \ldots, X^{(d)}) \colon [0,T] \times \Omega \to \R^d$ with $\forall \, t \in [0,T] \colon \P(X_t = \xi + \int_0^t \mu(X_s) \, ds + \int_0^t \sigma(X_s) \, dW_s)=1$, and every $n \in \N$ it holds that 
\begin{align}
\inf_{\substack{a,b \in [0, \tau], \\
b-a \geq \varepsilon_n }} \inf_{t_1,\ldots,t_n \in [0,T] }    \inf_{\substack{u \colon C([0,a] \cup [b,T], \R^m) \times (\R^m)^n  \to \R \\  \text{ measurable}}}\E  \Big[\big| X_T^{(1)} - u((W_s)_{s \in [0,a] \cup [b,T]},W_{t_1},\ldots,W_{t_n})\big|\Big] \geq \delta_n.
\end{align}
\end{cor}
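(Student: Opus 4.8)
The plan is to reduce Corollary~\ref{cor:W} to Theorem~\ref{thm:main} via a pigeonhole argument: finitely many point evaluations of the driving Brownian motion inside an observation gap can always be absorbed by replacing the gap with a slightly shorter sub-gap that avoids these points.

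First I would apply Theorem~\ref{thm:main}, keeping $T,\tau,d,\xi,m$ and $(\delta_n)_{n\in\N}$ but replacing $(\varepsilon_n)_{n\in\N}$ by $(\tilde\varepsilon_n)_{n\in\N}\subseteq(0,\tau]$ defined by $\tilde\varepsilon_n=\frac{\varepsilon_n}{n+1}$ (indeed $\tilde\varepsilon_n\in(0,\tau]$ since $\varepsilon_n\in(0,\tau]$ and $n+1\ge 2$). This produces globally bounded $\mu\in C^\infty(\R^d,\R^d)$ and $\sigma\in C^\infty(\R^d,\R^{d\times m})$ such that for every admissible probability space, normal filtration, standard Brownian motion $W$, solution process $X=(X^{(1)},\dots,X^{(d)})$, and every $n\in\N$ it holds that $\inf_{p,q\in[0,\tau],\,q-p\ge\tilde\varepsilon_n}\inf_{v\text{ meas.}}\E\big[\,|X_T^{(1)}-v((W_s)_{s\in[0,p]\cup[q,T]})|\,\big]\ge\delta_n$.

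Next I would fix such a probability space, filtration, Brownian motion $W$ and solution $X$, fix $n\in\N$, fix $a,b\in[0,\tau]$ with $b-a\ge\varepsilon_n$, fix $t_1,\dots,t_n\in[0,T]$, and fix a measurable $u\colon C([0,a]\cup[b,T],\R^m)\times(\R^m)^n\to\R$. Let $s_1\le\cdots\le s_k$ be the (at most $n$) elements of $\{t_1,\dots,t_n\}$ lying in the open interval $(a,b)$; they split $[a,b]$ into the $k+1\le n+1$ closed subintervals $[a,s_1],[s_1,s_2],\dots,[s_k,b]$, whose lengths sum to $b-a\ge\varepsilon_n$, so one of them, say $[p,q]$ with $a\le p<q\le b$, satisfies $q-p\ge\frac{b-a}{n+1}\ge\tilde\varepsilon_n$. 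By construction the open interval $(p,q)$ contains none of $t_1,\dots,t_n$ (a $t_j$ in $(a,b)$ coincides with some $s_i$ and hence is $\le p$ or $\ge q$, while a $t_j\notin(a,b)$ satisfies $t_j\le a\le p$ or $t_j\ge b\ge q$), so $\{t_1,\dots,t_n\}\subseteq[0,p]\cup[q,T]$ and also $[0,a]\cup[b,T]\subseteq[0,p]\cup[q,T]$. Since the restriction map $C([0,p]\cup[q,T],\R^m)\to C([0,a]\cup[b,T],\R^m)$ and the evaluation maps at the points $t_j\in[0,p]\cup[q,T]$ are continuous, hence measurable (here I would invoke the product-$\sigma$-algebra identities for spaces of continuous functions already used in the proof of Lemma~\ref{lemma:BM:gen}), the Doob--Dynkin lemma provides a measurable $v\colon C([0,p]\cup[q,T],\R^m)\to\R$ with $u((W_s)_{s\in[0,a]\cup[b,T]},W_{t_1},\dots,W_{t_n})=v((W_s)_{s\in[0,p]\cup[q,T]})$ $\P$-almost surely.

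Finally, combining these two steps yields $\E\big[\,|X_T^{(1)}-u((W_s)_{s\in[0,a]\cup[b,T]},W_{t_1},\dots,W_{t_n})|\,\big]=\E\big[\,|X_T^{(1)}-v((W_s)_{s\in[0,p]\cup[q,T]})|\,\big]\ge\inf_{v'\text{ meas.}}\E\big[\,|X_T^{(1)}-v'((W_s)_{s\in[0,p]\cup[q,T]})|\,\big]\ge\delta_n$, where the last inequality uses $p,q\in[0,\tau]$, $q-p\ge\tilde\varepsilon_n$, and the conclusion of Theorem~\ref{thm:main}. Taking the infimum over $a,b\in[0,\tau]$ with $b-a\ge\varepsilon_n$, over $t_1,\dots,t_n\in[0,T]$, and over measurable $u$ then gives the assertion. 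I expect the only nonroutine point to be the pigeonhole step producing the gap $[p,q]$ of guaranteed length $\tilde\varepsilon_n$ disjoint from the sample points; the remainder is the standard measurability bookkeeping for continuous-path spaces together with the elementary observation that enlarging the observed portion of $W$ cannot increase the optimal approximation error.
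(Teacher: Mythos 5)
Your proposal is correct and takes essentially the same route as the paper: both apply Theorem~\ref{thm:main} with $\varepsilon_n$ replaced by $\varepsilon_n/(n+1)$ and then bound the three-fold infimum by the infimum over gaps of length $\geq\varepsilon_n/(n+1)$ without additional point evaluations. The only difference is that the paper asserts this last inequality without comment, whereas you supply the pigeonhole/Doob--Dynkin justification explicitly; this is exactly the argument implicitly intended.
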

\begin{proof}[Proof of Corollary~\ref{cor:W}]
Note that Theorem~\ref{thm:main} (with $T=T$, $\tau=\tau$, $d=d$, $\xi=\xi$, $m =m$, $\varepsilon_n = \tfrac{\varepsilon_n}{(n+1)}$, $\delta_n =\delta_n$ for $n \in \N$ in the notation of Theorem~\ref{thm:main})  proves that there exist infinitely often differentiable and globally bounded functions $\mu \colon \R^d \to \R^d$ and $\sigma \colon \R^{d} \to \R^{d \times m}$ such that for every probability space $(\Omega, \mathcal{F}, \P)$, every normal filtration $\mathbb{F}=(\mathbb{F}_t)_{t \in [0,T]}$ on $(\Omega, \mathcal{F}, \P)$, every standard $(\Omega, \mathcal{F}, \P, \mathbb{F})$-Brownian motion $W \colon  [0,T] \times \Omega \to \R^m$, every continuous $\mathbb{F}$-adapted stochastic process $X=(X^{(1)}, \ldots, X^{(d)}) \colon [0,T] \times \Omega \to \R^d$ with $\forall \, t \in [0,T] \colon \P(X_t = \xi + \int_0^t \mu(X_s) \, ds + \int_0^t \sigma(X_s) \, dW_s)=1$, and every $n \in \N$ it holds that 
\begin{align}
\begin{split}
&\inf_{\substack{a,b \in [0, \tau], \\
b-a \geq \varepsilon_n }} \inf_{t_1,\ldots,t_n \in [0,T] }    \inf_{\substack{u \colon C([0,a] \cup [b,T], \R^m) \times (\R^m)^n  \to \R \\  \text{ measurable}}}\E  \Big[\big| X_T^{(1)} - u((W_s)_{s \in [0,a] \cup [b,T]},W_{t_1},\ldots,W_{t_n})\big|\Big] \\
&\geq \inf_{\substack{a,b \in [0, \tau], \\
b-a \geq \nicefrac{\varepsilon_n}{(n+1)} }}    \inf_{\substack{u \colon C([0,a] \cup [b,T], \R^m)  \to \R \\  \text{ measurable}}}\E  \Big[\big| X_T^{(1)} 
- u((W_s)_{s \in [0,a] \cup [b,T]})\big|\Big] \geq  \delta_n.
\end{split}
\end{align}
The proof of Corollary~\ref{cor:W} is thus completed.
\end{proof}

%
%

\bibliographystyle{acm}
\bibliography{bibfile}

\end{document}